\setlist[enumerate,1]{label={\upshape(\roman*)}}
\newtheorem{thm}{Theorem}[section]
\newtheorem{lemma}[thm]{Lemma}
\newtheorem{prop}[thm]{Proposition}
\theoremstyle{definition}
\newtheorem{defn}[thm]{Definition}
\newtheorem{exam}[thm]{Example}
\theoremstyle{remark}
\newtheorem{remark}[thm]{Remark}
\numberwithin{equation}{section}
\newcommand{\bb}[1]{\mathbb{#1}}
\newcommand{\cl}[1]{\mathcal{#1}}
\newcommand{\inner}[2]{\left\langle {#1},{#2} \right\rangle}
\newcommand{\inductlim}[1]{\underset{\rightarrow \textbf{#1}}{\lim}}
\newcommand{\projectlim}[1]{\underset{\leftarrow \textbf{#1}}{\lim}}
\begin{document}
\title[Matrix-ordered duals and projective limits]{Matrix-ordered duals of separable operator systems and projective limits}

\author[Wai Hin ~Ng]{Wai Hin~Ng}
\address{Department of Applied Mathematics, The Hong Kong Polytechnic University,
Hong Kong, China}
\email{ricky.wh.ng@polyu.edu.hk}

\date{ March 5, 2018 \,(Last revised)}
\keywords{Operator systems, Projective limits, Matrix-ordered duals}
\subjclass[2010]{Primary 46L07, 47L07; Secondary 47L50}
\begin{abstract}
We construct projective limit of projective sequence in the following categories: Archimedean order unit spaces with unital positive maps and operator systems with unital completely positive maps. We prove that inductive limit and projective limit in these categories are in duality, provided that the dual objects remain in the same categories and the maps are order embeddings. We generalize a result of Choi and Effros \cite{CE} on matrix-ordered duals of finite-dimensional operator systems to separable case. 
\end{abstract}

\maketitle

\section{Introduction}\label{Section:Intro}
In the past decade, the theory of operator systems has drawn a fair amount of attention in non-commutative functional analysis and quantum information theory. Among some of these works, the matrix-ordered duals of operator systems play an important role in tensor product theory \cite{FP, Ka, KPTT1}, as well as quantum graph theory  \cite{FKPT, PT2}. 
As a result of Choi-Effros abstract characterization of operator systems \cite[Theorem 4.4]{CE}, the matrix-ordered dual $S'$ of a finite-dimensional operator system $S$ remains to be an operator system with a suitable choice of order unit. 
\begin{thm}[Choi-Effros \cite{CE}]\label{DualFiniteDimension}
Let $S$ be a finite-dimensional operator system. Then 
	\begin{enumerate}[label={\upshape(\roman*)}, align=left, widest=iii, leftmargin=*]
		\item there exists faithful $f \colon S \to \bb{C}$; and
		\item any faithful functional $f$ is an Archimedean matrix order unit for $S'$.
	\end{enumerate}
Consequently, $S'$ with any faithful state is an operator system.  
\end{thm}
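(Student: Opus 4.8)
The plan is to prove (i) and (ii) in turn and then read off the last sentence from the Choi--Effros characterization \cite[Theorem 4.4]{CE}. First I would identify $M_n(S')$ with the space of linear maps $S\to M_n$ via $(f_{ij})\leftrightarrow\big(x\mapsto(f_{ij}(x))\big)$, so that by the definition of the matrix-ordered dual $M_n(S')^+$ is the cone of completely positive maps $S\to M_n$. Each such cone is an intersection of closed half-spaces, hence closed; it is proper, since $\pm\phi$ both positive forces $\phi(S^+)\subseteq M_n^+\cap(-M_n^+)=\{0\}$ and hence $\phi=0$ (because $1_S$ is an order unit, $S^+$ spans $S$); and it is compatible under conjugation by scalar matrices, by composition with $Y\mapsto\gamma^\ast Y\gamma$. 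Thus $\big(S',\{M_n(S')^+\}\big)$ is a matrix-ordered $\ast$-vector space, and \emph{because every $M_n(S')^+$ is closed}, the Archimedean condition for a candidate matrix order unit $e$ comes for free: $\varepsilon e_n+x\in M_n(S')^+$ for all $\varepsilon>0$ forces $x=\lim_{\varepsilon\to0^+}(\varepsilon e_n+x)\in M_n(S')^+$. So the whole content of (ii) is that, for faithful $f$, the diagonal element $f_n=\mathrm{diag}(f,\dots,f)$ is an \emph{order unit} for the ordered vector space $\big(M_n(S')_{sa},M_n(S')^+\big)$ for every $n$; granting this, $f$ is an Archimedean matrix order unit for $S'$, and when $f$ is a state the Choi--Effros axioms hold, so $\big(S',\{M_n(S')^+\},f\big)$ is an operator system.

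For (i): $S^+$ is a closed, proper, generating cone in the finite-dimensional space $S_{sa}$ (generating because $1_S$ is an order unit), so $(S^+)^\ast$ is closed, proper and full-dimensional in the dual, and a standard finite-dimensional fact identifies $\mathrm{int}\,(S^+)^\ast$ with the functionals strictly positive on $S^+\setminus\{0\}$, i.e.\ the faithful ones; in particular they exist. (Alternatively: states span $S^\ast$, so finitely many separate the points of $S$, and their average is a faithful state.) I will reuse this in the form valid for any finite-dimensional operator system $T$: \emph{a self-adjoint functional $h$ on $T$ is faithful iff $h\in\mathrm{int}\,(T^+)^\ast$, and every interior point of $(T^+)^\ast$ is an order unit for the ordered space of self-adjoint functionals on $T$ with cone $(T^+)^\ast$} (an interior point of a cone is always an order unit for it).

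For (ii): fix $n$ and let $f$ be faithful. The canonical pairing $\langle(f_{ij}),(x_{kl})\rangle=\sum_{k,l}f_{kl}(x_{kl})$ is a linear isomorphism $M_n(S')\cong\big(M_n(S)\big)^\ast$ carrying $M_n(S')^+$ --- the completely positive maps $S\to M_n$ --- onto the cone of positive functionals on the operator system $M_n(S)$; this is the Choi-type correspondence between completely positive maps into $M_n$ and positive functionals at the $n$-th matrix level. Under it $f_n=\mathrm{diag}(f,\dots,f)$ corresponds to $F_n\big((x_{kl})\big)=\sum_k f(x_{kk})=\mathrm{tr}\big(f^{(n)}((x_{kl}))\big)$, which is a faithful functional on $M_n(S)$: if $(x_{kl})\in M_n(S)^+$ and $F_n((x_{kl}))=0$, then each summand $f(x_{kk})\ge0$ is $0$, so $x_{kk}=0$ (as $x_{kk}\in S^+$ and $f$ is faithful), so $(x_{kl})=0$ (a positive matrix with zero diagonal vanishes). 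By the reusable fact with $T=M_n(S)$, $F_n$ is an order unit for the ordered space of self-adjoint functionals on $M_n(S)$, hence $f_n$ is an order unit for $\big(M_n(S')_{sa},M_n(S')^+\big)$. As this holds for all $n$ and the Archimedean condition is automatic, $f$ is an Archimedean matrix order unit for $S'$, giving (ii); and for $f$ a faithful state, $(S',f)$ is an operator system.

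The one place that needs genuine care is the identification $M_n(S')^+\cong\{\text{positive functionals on }M_n(S)\}$ --- equivalently, that a linear map $S\to M_n$ whose associated functional on $M_n(S)$ is positive is automatically completely positive --- together with the surrounding bookkeeping (closedness and properness of the matrix cones, and the precise hypotheses of the Choi--Effros characterization, so that ``order unit at every level'' really does suffice). The faithful-functional / interior-point / order-unit correspondence is elementary finite-dimensional convexity, but it is what powers both (i) and (ii), and it is here that finite-dimensionality is used essentially: to pass from interior point to order unit, and to guarantee that $S'$, hence each $M_n(S')$, is finite-dimensional.
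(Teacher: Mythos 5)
Your proposal is correct, but it proves the theorem by a genuinely different route from the paper. The paper obtains Theorem \ref{DualFiniteDimension} as a corollary of its later results: Proposition \ref{SeparableFaithful} gives a faithful state (a finite-dimensional system being separable), Proposition \ref{DualReflexive} uses reflexivity --- weak compactness of $B_1(S)\cap S^+$ --- to show that a faithful $\delta$ dominates a multiple of every self-adjoint functional, hence is an order unit for $S'$, and Theorem \ref{DualOrderUnitArch} (via Proposition \ref{MOUunit} and a weak*-limit argument) upgrades ``order unit'' to ``Archimedean matrix order unit'' in one stroke, so only the ground level is ever checked. You instead argue at every matrix level by hand: you identify $M_n(S')^+$ with the positive functionals on $M_n(S)$ --- exactly the identification the paper sets up in \S\ref{Section:dualopsys} via \cite[Theorem 6.1]{Pa2}, so the step you rightly flag as the delicate one is available off the shelf --- show that $I_n\otimes f$ corresponds to a faithful functional on $M_n(S)$ (your ``positive matrix with zero diagonal vanishes'' claim does hold abstractly, by compressing $2\times 2$ corners with scalar vectors), and then invoke the finite-dimensional convexity fact that faithful $=$ interior point of the dual cone $=$ order unit, with the Archimedean property free from closedness of the matrix cones. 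What your approach buys is a self-contained, elementary proof, including a direct proof of (i) with no separability argument; what the paper's approach buys is reusability, since Propositions \ref{DualReflexive}, \ref{SeparableFaithful} and Theorem \ref{DualOrderUnitArch} are formulated for reflexive respectively separable systems and are precisely the ingredients recycled for Theorem \ref{DualSeparable}, whereas your interior-point argument is intrinsically finite-dimensional (dual cones need not have interior otherwise). Two small economies you could take: Proposition \ref{MOUunit} would spare you the level-$n$ faithfulness computation, since an order unit for $S'$ is automatically a matrix order unit; and the spanning condition on the dual matrix cones, which you leave implicit when invoking the Choi--Effros axioms, is already implied by the order-unit property you establish.
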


For instance, if $S \subset M_n$ is an operator system, then $S'$ with the trace functional is an operator system. 
In general, when $\dim(S) = \infty$, it is not clear whether $S'$ possesses an Archimedean matrix order unit.

A natural attempt to this question is approximation by inductive sequence of finite-dimensional operator systems. Inductive limits of complete operator systems were introduced by Kirchberg in \cite{K1995CAR}, which relies on the norm structure.  Recently, inductive limits of (non-complete) operator systems are also studied \cite{ Li2017inductive, LK2016inductive}; and a systematic investigation has started in \cite{MT}. In particular, in \cite[\S 3.2]{MT} Mawhinney and Todorov showed that every inductive sequence of operator systems, via duality, induces a projective sequence of the corresponding state spaces, whose projective limit is homeomorphic to the state space of its inductive limit. 

Motivated by their work, this paper aims to provide a construction of projective limits in the categories of AOU spaces and operator systems. We are also interested in the duality between inductive and projective limits when the dual objects remain in the same categories. As a main application, we generalize Theorem \ref{DualFiniteDimension} to separable operator systems; see Theorem \ref{DualSeparable}. 

The paper is organized as follows. We begin with the preliminaries in \S \ref{Section:Preliminaries}. These include the basics of AOU spaces, operator systems, and inductive limits in these categories developed in \cite{PT, PTT, MT} respectively. 
In \S \ref{Section:Projective limits}, we construct projective limit in AOU spaces and operator systems using their order structures and the corresponding order norms. 
Then we show that inductive and projective limits are in duality, provided the dual objects remain in the same categories and the maps are unital complete order embeddings in \S \ref{Section:Duality injective}. In \S \ref{Section:dualityMain}, we show the existence of faithful state for separable operator systems, which allows us to generalize Theorem \ref{DualFiniteDimension} using inductive and projective limits. 

\section{Preliminaries}\label{Section:Preliminaries}
We outline the basics of Archimedean order unit vector spaces and operator systems developed by Paulsen and Tomforde in \cite{PT}. We also summarize some results of \cite{MT} that will be used in  \S \ref{Section:Duality injective}. 

\subsection{AOU spaces}\label{AOUspaces}
A \textit{$\ast$-vector space} $V$ is a complex vector space equipped with an involution $\ast$. Elements in the real subspace $V_h = \{v \in V \colon v^* = v \}$ are called Hermitian or self-adjoint. Note that for $v \in V$, $v = \Re(v) + i \Im(v)$, where $\Re(v) = \frac{1}{2}(v+v^*)$ and $\Im(v) = \frac{1}{2i} (v - v^*)$ are Hermitian. 

An \textit{ordered $\ast$-vector space} is a pair $(V, V^+)$, where $V$ is a $\ast$-vector space and $V^+$ is a proper cone in $V_h$. The cone induces a natural partial ordering on $V_h$, i.e. $v \leq w$ if and only if $w - v \in V^+$. 
Given an ordered $\ast$-vector space $(V, V^+)$, an element $e \in V^+$ is called an \textit{order unit} if for each $v \in V_h$, there exists $r \geq 0$ such that $v \leq re$. The triple $(V, V^+, e)$ is called an order unit space. 
The order unit $e$ is called \textit{Archimedean}, provided that for all $v \in V_h$,
	\begin{equation*}
		 \forall r > 0, re + v \in V^+ \Longrightarrow v \in V^+.
	\end{equation*}
In this case, the triple $(V, V^+, e)$ is called an \textit{Archimedean order unit space}, or an AOU space for short. We often write denote it by $(V, e)$ or simply $V$ whenever the content is clear.

Given two AOU spaces $(V, V^+, e_V)$ and $(W, W^+, e_W)$, a linear map $\phi \colon V \to W$ is called \textit{positive} if $\phi(V^+) \subset W^+$ and \textit{unital} if $\phi(e_V) = e_W$. It is an \textit{order isomorphism} provided it is bijective and $\phi(v) \in W^+$ if and only if $v\in V^+$. A \textit{state} on $V$ is a unital positive functional $f \colon V \to \mathbb{C}$. We write $\mathfrak{S}(V)$ for the set of states on $V$ and call it the \textit{state space} of $V$. Note that $\mathfrak{S}(V)$ is a cone in the algebraic dual of $V$.

Given an order unit space $(V, V^+, e)$, there is a seminorm of $V_h$ by
	\begin{equation}\label{OrderNorm}
		|| v ||_h = \inf \{ r > 0 \colon - re \leq v \leq re \}.
	\end{equation}
We call $|| \cdot ||_h$ the \textit{order seminorm} on $V_h$ determined by $e$. By \cite[Theorem 2.30]{PT}, $e$ is Archimedean if and only if $V^+$ is closed in $V_h$ in the order topology induced by $|| \cdot ||_h$. In this case, by \cite[Proposition 2.23]{PT} $||\cdot||_h$ is a norm on $V_h$, and we call $|| \cdot ||_h$ the \textit{order norm} determined by $e$ on $V_h$. A \textit{$\ast$-seminorm} $|| \cdot ||$ on $V$ is a seminorm such that $||v^*|| = || v ||$; it is called an \textit{order seminorm} if $|| v || = || v ||_h$ for every $v \in V_h$. In \cite{PT}, Paulsen and Tomforde introduced the \textit{minimal order seminorm} 
	\begin{equation}
		|| v ||_m  	:= \sup \{|f(v)| \colon f \in \mathfrak{S}(V)\},	
	\end{equation}				 
and the \textit{maximal order seminorm}
	\begin{equation}
		|| v ||_M 	:= \inf \{ \sum_{i=1}^n | \lambda_i | ||v_i||_h \colon v = \sum_{i=1}^n \lambda_i v_i, \; \lambda_i \in \mathbb{C}, v_i \in V_h\};
	\end{equation}
and $|| \cdot ||_m \leq || \cdot || \leq || \cdot ||_M \leq 2|| \cdot ||_m$ for all order seminorms \cite[Proposition 4.9]{PT}. Moreover, if $(V, V^+ e)$ is an AOU space, then every order seminorm is an order norm. 
The following proposition is part of \cite[Theorem 4.22]{PT}.

\begin{prop}\label{upContractive}
A unital linear map $\phi$ between AOU spaces is positive if and only if it has norm one with respect to the minimal order seminorms of both spaces.
\end{prop}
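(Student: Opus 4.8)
The plan is to prove the two implications separately, working directly from the formula $\|v\|_m=\sup\{|f(v)|:f\in\mathfrak{S}(V)\}$ and from the Archimedean axiom; no further structure is needed.

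For the ``only if'' direction, suppose $\phi$ is unital and positive. The key observation is that composition with a state of $W$ produces a state of $V$: if $g\in\mathfrak{S}(W)$, then $g\circ\phi$ is unital (since $\phi(e_V)=e_W$) and positive (since $\phi(V^+)\subseteq W^+$), so $g\circ\phi\in\mathfrak{S}(V)$. Taking suprema,
\[
\|\phi(v)\|_m=\sup_{g\in\mathfrak{S}(W)}|(g\circ\phi)(v)|\le\sup_{f\in\mathfrak{S}(V)}|f(v)|=\|v\|_m
\]
for every $v\in V$, so $\phi$ is contractive; and since $\|e_V\|_m=\|e_W\|_m=1$ and $\phi(e_V)=e_W$, the operator norm of $\phi$ is exactly one.

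For the ``if'' direction, assume $\phi$ is unital with $\|\phi(v)\|_m\le\|v\|_m$ for all $v$, and let $v\in V^+$; after rescaling we may take $\|v\|_h\le1$, so $0\le v\le e_V$ and hence $\|e_V-v\|_h\le1$. Because $\|\cdot\|_m$ and $\|\cdot\|_h$ coincide on Hermitian elements, this gives $\|e_W-\phi(v)\|_m=\|\phi(e_V-v)\|_m\le1$. The first step is to check $\phi(v)\in W_h$: write $\phi(v)=x+iy$ with $x,y\in W_h$, and apply the hypothesis to $v+ise_V$ for $s\in\mathbb{R}$; using unitality ($\phi(v+ise_V)=\phi(v)+ise_W$) and the identity $\|u+ise_V\|_m^2=\|u\|_m^2+s^2$ for $u\in V_h$ (valid since states are real on $V_h$), one gets $g(x)^2+g(y)^2+2s\,g(y)\le\|v\|_m^2$ for every $g\in\mathfrak{S}(W)$ and every $s$, which forces $g(y)=0$ for all $g$, hence $y=0$ as $\|\cdot\|_m$ is a norm on the AOU space $W$. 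Now $\phi(v)\in W_h$ and $\|e_W-\phi(v)\|_h\le1$, i.e. $\phi(v)+(r-1)e_W\ge0$ for every $r>1$, so the Archimedean property of $W$ yields $\phi(v)\ge0$. Thus $\phi$ is positive.

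Most of the above is bookkeeping; the substantive step is the end of the converse, which is precisely where the Archimedean hypothesis on $W$ enters (together with the fact, true in an AOU space, that the minimal order norm separates points). An alternative to the converse is to observe that each $g\circ\phi$ is a unital functional dominated by $\|\cdot\|_m$ and to quote that such a functional is automatically a state; but that assertion is itself proved by the same $s\to\pm\infty$ argument, so I would argue directly as above.
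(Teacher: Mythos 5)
Your proof is correct. Note that the paper itself offers no argument for this proposition: it is quoted as part of \cite[Theorem 4.22]{PT}, so there is nothing internal to compare against line by line. Your two directions are exactly the standard Paulsen--Tomforde mechanism. The forward direction (states pull back under unital positive maps, then take suprema, with $\phi(e_V)=e_W$ giving norm exactly one) is the canonical argument. For the converse, the substantive points all check out: the identity $\|u+ise_V\|_m^2=\|u\|_m^2+s^2$ for $u\in V_h$ is valid because states are real on Hermitians, and letting $s\to\pm\infty$ does force $g(\Im\phi(v))=0$ for every state $g$ of $W$, whence $\Im\phi(v)=0$ since $\|\cdot\|_m$ is a norm on the AOU space $W$; then $\|e_W-\phi(v)\|_h\le 1$ plus the Archimedean axiom gives $\phi(v)\ge 0$. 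The only things you import silently are PT facts already presupposed by the paper, namely that $\|\cdot\|_m$ is an order (semi)norm, i.e.\ agrees with $\|\cdot\|_h$ on Hermitians of an AOU space (used twice, once in each space), and that states exist/span enough to make $\|\cdot\|_m$ a norm. Compared with the route in \cite{PT}, which factors the converse through the statement that a unital functional contractive for $\|\cdot\|_m$ is automatically a state and that positivity in an AOU space is detected by states, your argument simply inlines that functional-level computation; as you observe, the $s\to\pm\infty$ trick is the common core, so the two proofs are essentially the same argument packaged differently, yours being self-contained and the PT version more modular.
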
 

We remark that the original statement also takes into account of the \textit{decomposition order seminorm}, which is not used in this paper. We have a partial converse for the maximal order seminorms.

\begin{lemma}
Let $\phi \colon V \to W$ be a unital positive map between real AOU spaces. Then $\phi$ is contractive with respect to the order seminorms. 
\end{lemma}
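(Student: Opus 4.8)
The plan is to unwind the definition \eqref{OrderNorm} of the order seminorm and push the defining inequalities through $\phi$, using only positivity and unitality. The first point to make is a reduction: in a \emph{real} AOU space every element is self-adjoint, so $V_h = V$ and there is a single order seminorm on each of $V$ and $W$, namely the seminorm $\|\cdot\|_h$ determined by the respective order unit. Writing $e_V, e_W$ for the order units, the assertion therefore says exactly that $\|\phi(v)\|_h \le \|v\|_h$ for every $v \in V$.

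To establish this, I would fix $v \in V$ and an arbitrary $r > \|v\|_h$. By \eqref{OrderNorm} this means $-r e_V \le v \le r e_V$, i.e.\ $r e_V - v \in V^+$ and $r e_V + v \in V^+$. Since $\phi$ is positive it maps $V^+$ into $W^+$, and since it is unital it sends $e_V$ to $e_W$; hence $r e_W - \phi(v) = \phi(r e_V - v) \in W^+$ and likewise $r e_W + \phi(v) \in W^+$. Thus $-r e_W \le \phi(v) \le r e_W$, so $\|\phi(v)\|_h \le r$. Letting $r$ decrease to $\|v\|_h$ gives $\|\phi(v)\|_h \le \|v\|_h$, which is the claimed contractivity.

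There is essentially no obstacle here; the substance of the lemma is the bookkeeping above together with the observation that, in the real setting, ``the order seminorms'' are unambiguous. It is worth noting which hypotheses do the work: positivity and unitality are used directly, while the Archimedean property is needed only to know (via \cite[Proposition 2.23]{PT}) that $\|\cdot\|_h$ is genuinely a norm — the displayed inequality itself holds regardless. This is the natural companion of Proposition~\ref{upContractive}: there positivity of a unital map is characterized via the \emph{minimal} order seminorm, whereas here positivity is seen to force contractivity for the order norm on the self-adjoint part, equivalently for the maximal order seminorm restricted to Hermitian elements, where all order seminorms coincide.
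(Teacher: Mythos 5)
Your proof is correct and follows essentially the same route as the paper: push the defining inequalities $-re_V \le v \le re_V$ through the unital positive map and read off the bound from the definition of the order seminorm. Your version is in fact slightly more careful, since you take $r > \|v\|_h$ and pass to the infimum rather than using $r = \|v\|_h$ directly (which requires the cone to be closed, i.e.\ the Archimedean property, as the paper implicitly does).
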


\begin{proof}
For each $v \in V$, let $r = || v ||$, then $r \phi(v) \pm e_W = \phi( rv \pm e_V ) \geq 0$. By definition of order seminorm, $|| \phi(v) || \leq || v ||$. 
\end{proof}

\begin{prop}
Let $\phi \colon V \to W$ be unital positive map between AOU spaces. Then $\phi$ is contractive with respect to the maximal order norms on $V$ and $W$; in fact, $|| \phi ||_M := \sup \{ ||\phi(v)||_M \colon || v ||_M \leq 1  \} = 1$.
\end{prop}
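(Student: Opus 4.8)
The plan is to unwind the definition of the maximal order seminorm and transport a Hermitian decomposition of $v$ through $\phi$. First I would observe that $\phi$ maps $V_h$ into $W_h$: given $v \in V_h$, pick $r \geq 0$ with $re_V - v \in V^+$, so that $v = re_V - (re_V - v)$ exhibits $v$ as a difference of elements of $V^+$; applying $\phi$ gives $\phi(v) = re_W - \phi(re_V - v) \in W_h$. Hence the restriction $\phi|_{V_h} \colon (V_h, V^+, e_V) \to (W_h, W^+, e_W)$ is a unital positive map between real AOU spaces, and the preceding lemma yields $\| \phi(u) \|_h \leq \| u \|_h$ for every $u \in V_h$.

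Next, I would fix $v \in V$ and an arbitrary representation $v = \sum_{i=1}^n \lambda_i v_i$ with $\lambda_i \in \mathbb{C}$ and $v_i \in V_h$. Then $\phi(v) = \sum_{i=1}^n \lambda_i \phi(v_i)$ is a representation of $\phi(v)$ by Hermitian elements of $W$, so by the definition of $\| \cdot \|_M$ on $W$ together with the contractivity from the previous step,
\[
\| \phi(v) \|_M \;\leq\; \sum_{i=1}^n |\lambda_i|\, \| \phi(v_i) \|_h \;\leq\; \sum_{i=1}^n |\lambda_i|\, \| v_i \|_h .
\]
Taking the infimum over all such representations of $v$ gives $\| \phi(v) \|_M \leq \| v \|_M$, hence $\| \phi \|_M \leq 1$.

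For the reverse inequality I would test at the order unit. Using the trivial representation $e_V = 1 \cdot e_V$ and the identity $\| e_V \|_h = 1$ (valid in any nonzero order unit space) one gets $\| e_V \|_M \leq 1$; on the other hand $\| e_V \|_M \geq \| e_V \|_m = \sup\{ |f(e_V)| \colon f \in \mathfrak{S}(V) \} = 1$ since every state is unital. Hence $\| e_V \|_M = 1$, and likewise $\| e_W \|_M = 1$. As $\phi$ is unital, $\phi(e_V) = e_W$, so $\| \phi(e_V) \|_M = \| e_W \|_M = 1 = \| e_V \|_M$, which forces $\| \phi \|_M \geq 1$. Combining the two bounds gives $\| \phi \|_M = 1$.

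I do not expect a serious obstacle; the computation is short once the right reductions are in place. The two points deserving care are: (a) one must pass to the Hermitian parts before invoking the preceding lemma, since that lemma is stated only for real AOU spaces; and (b) one should recall that on an AOU space $\| \cdot \|_M$ is a genuine norm (every order seminorm is an order norm), so that the quantity $\| \phi \|_M$ is well defined and the lower-bound argument at $e_V$ is legitimate.
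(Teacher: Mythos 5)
Your proof is correct and follows essentially the same route as the paper: decompose $v$ into Hermitian elements, apply the preceding lemma termwise to get $\|\phi(v)\|_M \leq \|v\|_M$ after taking the infimum, and then use unitality ($\|\phi(e_V)\|_M = \|e_W\|_M = 1$) to conclude $\|\phi\|_M = 1$. Your extra care in checking that $\phi$ preserves Hermitian elements and in verifying $\|e_V\|_M = \|e_W\|_M = 1$ only makes explicit details the paper leaves implicit.
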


\begin{proof}
Let $v \in V$ and write $v = \sum_{i=1}^n \lambda_i v_i$, where $\lambda_i \in \mathbb{C}$ and $v_i \in V_h$. Then
	\begin{align*}
		|| \phi(v) ||_M 	&\leq 	\sum_{i=1}^n | \lambda_i | || \phi(v_i) ||_h 	\leq \sum_{i=1}^n |\lambda_i| || v_i ||_h,
	\end{align*}
where the last inequality follows from the previous lemma. By taking the infinmum over all such representations $v = \sum_i \lambda_i v_i$, we deduce that $|| \phi(v) ||_M \leq || v ||_M$. Also, $|| \phi(e_V) ||_M = || e_W ||_M = 1$, hence $|| \phi ||_M = 1$. 
\end{proof}

The partial ordering on the order unit space $(V, V^+, e)$ gives rise to an \textit{order topology}, which by \cite[Proposition 4.9]{PT}, is equivalent to the \textit{seminorm topology} induced by any of the order seminorms. Moreover, the subspace topology on $V_h$ is equivalent to the topology induced by $|| \cdot ||_h$ on $V_h$. By Proposition \ref{upContractive}, unital positive map $\phi \colon V \to W$ between AOU spaces is continuous with respect to the order topology. 

We denote $V'$ the space of continuous linear functionals in the order topology. We let $\phi' \colon W' \to V'$, $\phi'(f) = f \circ \phi$, be the \textit{dual map} or \textit{adjoint} of $\phi$. When $V$ is an AOU space, $V'$ is the dual normed space with respect to any of the order norm on $V$, hence it is a Banach space. We equip $V'$ the weak*-topology generated by the order norm topology on $V$. By \cite[Theorem 5.2]{PT}, the state space $\mathfrak{S}(V)$ is a compact cone that spans $V'$.

\begin{defn}[Ordered dual]
Given an AOU space $(V, V^+, e)$, we define an involution on $V'$ by $f^*(v) := \overline{f(v)}$. We equip $V'$ the natural order $f \in (V')^+$ if and only if $f$ is a positive linear functional. We call the ordered $\ast$-vector space $(V', (V')^+)$ the \textbf{ordered dual} of $V$ and denote it by $V'$. Note that the ordered dual of an AOU space need not be an AOU space.
\end{defn}

We denote by \textbf{OU} the category whose objects are order unit spaces with morphisms being unital positive maps, and by \textbf{AOU} the category whose objects are AOU spaces with the same morphisms. The process of Archimedeanization is a functor from \textbf{OU} to \textbf{AOU} by forming some quotient of $V$ and taking closure of $V^+$, see \cite[\S 3.2]{PT}.

\subsection{Operator Systems}\label{OS}
Given a $\ast$-vector space $S$, for each $n \in \mathbb{N}$, we identify the vector space tensor product $M_n \otimes S = M_n(S)$, whose elements are $n$ by $n$ matrices with entries in $S$, equipped with the involution $[s_{ij}]^* := [s_{ji}^*]$. It follows that $M_n(S)$ is a $\ast$-vector space, and we denote $M_n(S)_h$ for its Hermitian subspace. A \textit{matrix ordering} on $S$ is a family of cones $C_n \subset M_n(S)_h$, $n \in \bb{N}$, satisfying: 
	\begin{enumerate}[label={\upshape(\roman*)}, align=left, widest=iii, leftmargin=*]
		\item 	$C_n \cap -C_n = \{ 0 \}$, for all $n \geq 1$;
		\item 	$M_n(C_n)$ is the complex span of $C_n$;
		\item 	$\alpha^* C_n \alpha \subset C_m$, for each $\alpha \in M_{n,m}(\mathbb{C})$.
	\end{enumerate}
This last condition is often called \textit{compatibility} of $\{C_n\}$. 
A \textit{matrix-ordered $\ast$-vector space} is a pair $(S, \{C_n\}_{n=1}^{\infty})$, where $S$ is a $\ast$-vector space and $\{C_n\}_{n=1}^{\infty}$ is a matrix ordering. When the content is clear, we often write $M_n(S)^+$ for $C_n$. Note that in this case, for each $n\in\mathbb{N}$, $(M_n(S), M_n(S)^+)$ is a $\ast$-ordered vector space. 

An element $e \in C_1 = S^+$ is a \textit{matrix order unit}, provided that $I_n \otimes e$ is an order unit for $(M_n(S), M_n(S)^+)$ for every $n \in \mathbb{N}$; it is called an \textit{Archimedean matrix order unit} if $I_n \otimes e$ is an Archimedean order unit for $(M_n(S), M_n(S)^+)$ for each $n \in \mathbb{N}$. 
The triple $(S, \{C_n\}_{n=1}^{\infty}, e)$ is called a \textit{matrix-ordered $\ast$-vector space with a matrix order unit}, or MOU space for short, provided $e$ is a matrix order unit; it is called an \textit{abstract operator system} if $e$ is an Archimedean matrix order unit. 
We often denote it by the triple $(S, \{C_n\} , e)$, $(S, e)$, or simply $S$ whenever the content is clear.  

Let $\phi \colon S \to T$ be a linear map between MOU spaces $S$ and $T$. For each $n \in \mathbb{N}$, we write $\phi^{(n)} = id_n \otimes \phi \colon M_n(S) \to M_n(T)$  by $A \otimes s \mapsto A \otimes \phi(s)$. We say that $\phi$ is \textit{$n$-positive} if $\phi^{(n)}$ is positive between the order unit spaces $M_n(S)$ and $M_n(T)$; and $\phi$ is \textit{completely positive} provided that $\phi$ is $n$-positive for every $n \in \mathbb{N}$. We write $CP(S, T)$ (resp. $UCP(S, T)$) for the cone of (resp. unital) completely positive maps from $S$ to $T$. We say that $\phi$ is a \textit{complete order isomophism} if $\phi$ is bijective and both $\phi$ and $\phi^{-1}$ are completely positive; $\phi$ is a \textit{complete order embedding} if $\phi$ is a complete order isomorphism onto its range. We denote by \textbf{MOU} the category whose objects are MOU spaces with morphisms being unital completely positive maps, and by \textbf{OS} the category whose objects are operator systems with the same morphisms.  The process of Archimedeanization from \textbf{MOU} to \textbf{OS} was explicitly studied in \cite[\S 3.1]{PTT}. 

A \textit{concrete operator system} is a unital selfadjoint subspace $S$ of $B(\cl{H})$, the C*-algebra of bounded linear operators on a Hilbert space $\cl{H}$. Naturally $S$ inherits a matrix ordering $\{M_n(S)^+\}_{n=1}^{\infty}$ from $B(\cl{H})$ by $M_n(S)^+ := M_n(B(\cl{H}))^+ \cap M_n(S)$, where we identify $M_n(B(\cl{H})) \cong B( \oplus_{i=1}^n \cl{H} )$. Moreover, the identity $I$ is an Archimedean matrix order unit for $(S, \{M_n(S)^+\}_{n=1}^{\infty})$, thus it is an abstract operator system. The converse was proved by Choi and Effros \cite[Theorem 4.4]{CE}.

The next proposition can be found in \cite[Remark 1.2]{FNT2017}, which is a property of MOU spaces rather than operator systems. We include their proof for completeness as this handy tool reduces the complexity in many proofs in the literature. 

\begin{prop}\label{MOUunit}
Let $S$ be a matrix-ordered $\ast$-vector space. Then $e \in S^+$ is an order unit if and only if it is a matrix order unit. 
\end{prop}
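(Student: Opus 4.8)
The reverse implication is immediate: if $e$ is a matrix order unit then, by definition, $I_n\otimes e$ is an order unit for $(M_n(S),C_n)$ for every $n$, and the case $n=1$ gives that $e$ is an order unit for $(S,S^+)$. So the plan is to prove the forward implication: assuming $e\in S^+$ is an order unit for $(S,S^+)$, I want to show that for each fixed $n$ and each $X=[s_{ij}]\in M_n(S)_h$ (so $s_{ii}\in S_h$ and $s_{ji}=s_{ij}^*$) there is $r>0$ with $r(I_n\otimes e)-X\in C_n$.

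Two elementary ingredients drive the argument, both coming from the compatibility axiom (iii). First, a transport lemma: for $c\in C_1=S^+$ and $\alpha\in M_{1,n}(\mathbb{C})$ one has $\alpha^*c\,\alpha\in C_n$; taking $\alpha$ to be the $i$-th coordinate row vector gives $c\otimes E_{ii}\in C_n$, and since $C_n$ is a cone, $\sum_k c_k\otimes E_{kk}\in C_n$ whenever $c_k\in C_1$. Second, a $2\times 2$ domination lemma: for any $s\in S$ there is $r>0$ with $\left(\begin{smallmatrix}re&s\\ s^*&re\end{smallmatrix}\right)\in C_2$. To see this, use that $e$ is an order unit of $S$ to pick $r$ with $re\pm\Re(s)\in C_1$ and $re\pm\Im(s)\in C_1$, form the four conjugates $\gamma^*c\,\gamma$ with $\gamma$ ranging over $(1,1),(1,-1),(1,i),(1,-i)\in M_{1,2}(\mathbb{C})$ and $c$ over $re\pm\Re(s),\,re\pm\Im(s)$ (each lies in $C_2$ by axiom (iii)), and check that a suitable sum of them equals $\left(\begin{smallmatrix}2re&s\\ s^*&2re\end{smallmatrix}\right)$, using $s=\Re(s)+i\,\Im(s)$; then rescale $r$.

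With these, decompose
\[
X \;=\; \sum_{i=1}^n s_{ii}\otimes E_{ii}\;+\;\sum_{1\le i<j\le n}\bigl(s_{ij}\otimes E_{ij}+s_{ij}^*\otimes E_{ji}\bigr)
\]
and dominate each piece. For a diagonal piece, choose $r_i>0$ with $r_ie-s_{ii}\in C_1$; then $r_i(I_n\otimes e)-s_{ii}\otimes E_{ii}=(r_ie-s_{ii})\otimes E_{ii}+\sum_{k\ne i}r_ie\otimes E_{kk}\in C_n$ by the transport lemma. For an off-diagonal piece, apply the $2\times 2$ lemma to $-s_{ij}$ to get $r_{ij}>0$ with $\left(\begin{smallmatrix}r_{ij}e&-s_{ij}\\ -s_{ij}^*&r_{ij}e\end{smallmatrix}\right)\in C_2$, then compress by $\alpha\in M_{2,n}(\mathbb{C})$ whose two rows are the $i$-th and $j$-th coordinate vectors; axiom (iii) gives $r_{ij}(e\otimes E_{ii}+e\otimes E_{jj})-(s_{ij}\otimes E_{ij}+s_{ij}^*\otimes E_{ji})\in C_n$, and adding $r_{ij}\sum_{k\ne i,j}e\otimes E_{kk}$ yields $r_{ij}(I_n\otimes e)-(s_{ij}\otimes E_{ij}+s_{ij}^*\otimes E_{ji})\in C_n$. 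Summing over all pieces, $R(I_n\otimes e)-X\in C_n$ with $R=\sum_i r_i+\sum_{i<j}r_{ij}$, because $C_n$ is a cone. Since $X$ and $n$ were arbitrary, $e$ is a matrix order unit.

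I do not expect a genuine obstacle here — the statement is soft, resting only on axiom (iii) and on $e$ being an order unit of $S$ in the ordinary sense. The one spot needing care is the bookkeeping in the $2\times 2$ domination lemma: writing out the four conjugates $\gamma^*c\,\gamma$ explicitly and confirming that their combination produces exactly $\left(\begin{smallmatrix}re&s\\ s^*&re\end{smallmatrix}\right)$, and then tracking indices correctly when these $2\times 2$ blocks are compressed into $M_n(S)$.
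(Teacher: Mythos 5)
Your argument is correct, and it reaches the conclusion by a route that differs in its machinery from the paper's. The paper decomposes a Hermitian $A\in M_n(S)_h$ as $A=\sum_i A_i\otimes x_i$ with $A_i\in (M_n)_h$ scalar and $x_i\in S_h$ (citing \cite[Lemma 3.7]{PTT}), splits each $A_i=P_i-Q_i$ into positive parts, writes $r(\sum_i P_i+Q_i)\otimes e-A=\sum_i P_i\otimes(re-x_i)+\sum_i Q_i\otimes(re+x_i)\in M_n(S)^+$, and then dominates the scalar matrix $r\sum_i(P_i+Q_i)$ by $\lambda I_n$; compatibility enters through the fact that $P\otimes c\in C_n$ for $P\in M_n^+$, $c\in S^+$. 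You instead work entrywise with matrix units: diagonal terms are handled by the order unit property plus the conjugation $e_i^*\,c\,e_i=c\otimes E_{ii}$, and each off-diagonal pair $s_{ij}\otimes E_{ij}+s_{ij}^*\otimes E_{ji}$ is dominated via a hand-built $2\times 2$ lemma obtained by conjugating $re\pm\Re(s)$, $re\pm\Im(s)$ by the rows $(1,\pm 1)$, $(1,\pm i)$ and compressing into positions $i,j$. Your computation checks out (the sum of the four conjugates is $\bigl(\begin{smallmatrix}4re&2s\\2s^*&4re\end{smallmatrix}\bigr)$, so the promised rescaling is exactly the needed factor), and the gain is self-containedness: you never invoke the decomposition lemma from \cite{PTT}, only axiom (iii) and the order unit hypothesis, at the cost of more index bookkeeping. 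The paper's version is shorter once that lemma is granted and scales more cleanly since it treats all of $A$ at once rather than $O(n^2)$ pieces, but both proofs rest on the same two ingredients, so the difference is one of packaging rather than substance.
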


\begin{proof}
Given $A \in M_n(S)_h$, decompose $A = \sum_i A_i \otimes x_i$, where $A_i \in (M_n)_h$ and $x_i \in S_h$ by  \cite[Lemma 3.7]{PTT}. Since $e$ is an order unit, there exists $r > 0 $ such that $r e \pm x_i \in S^+$ for each $i$. Decompose $A_i = P_i - Q_i$, where $P_i, Q_i \in M_n^+$. Note that
	\begin{align*}
		r (\sum_i P_i + Q_i) \otimes e - A 	&= \sum_i P_i \otimes (re - x_i) + \sum_i Q_i \otimes (re + x_i)
	\end{align*}
is in $M_n(S)^+$. By choosing $\lambda > 0 $ such that $\lambda I_n \geq r \sum_i P_i + Q_i$, we deduce that $\lambda I_n \otimes e - A \in M_n(S)^+$. Therefore, $e$ is a matrix order unit for $S$. 
\end{proof}

We also need the following lemma on norm bound. We write $|| T ||_{op}$ for the operator norm of an operator $T$ over a Hilbert space $\mathcal{H}$. 

\begin{lemma}\label{op-norm_estimate}
Let $S \subset B(\mathcal{H})$ be a concrete operator system. Then for each $n \in \mathbb{N}$ and $[T_{ij}] \in M_n(S)$, $|| [T_{ij}] ||_{op} \leq n \cdot \max_{ij} || T_{ij} ||_{M}$.  
\end{lemma}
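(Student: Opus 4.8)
The plan is to reduce the operator-norm estimate to the diagonal decomposition of an $n\times n$ block matrix, exploiting that the loss is governed by the number of generalized permutation matrices one needs (namely $n$), not by the number of entries (which would only give the weaker bound $n^2\max_{ij}\|T_{ij}\|_M$).

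First I would record the elementary fact that $\|T\|_{op}\le\|T\|_M$ for a single $T\in S$. Writing $T=\sum_i\lambda_i v_i$ with $\lambda_i\in\bb{C}$ and $v_i\in S_h$, the triangle inequality gives $\|T\|_{op}\le\sum_i|\lambda_i|\,\|v_i\|_{op}=\sum_i|\lambda_i|\,\|v_i\|_h$, since for self-adjoint $v\in S\subset B(\cl{H})$ one has $-rI\le v\le rI$ if and only if $\|v\|_{op}\le r$, i.e.\ the operator norm restricted to $S_h$ is exactly the order norm $\|\cdot\|_h$ determined by $I$. Taking the infimum over all such representations yields $\|T\|_{op}\le\|T\|_M$; equivalently, $\|\cdot\|_{op}$ is an order seminorm on the AOU space $(S,S^+,I)$ and so is dominated by $\|\cdot\|_M$ by \cite[Proposition 4.9]{PT}. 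Hence it suffices to prove $\|[T_{ij}]\|_{op}\le n\max_{ij}\|T_{ij}\|_{op}$.

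For the main step I would decompose $[T_{ij}]=\sum_{k=0}^{n-1}D_k$ along cyclic diagonals, where $D_k$ has $(i,j)$-entry $T_{ij}$ when $j-i\equiv k\pmod n$ and $0$ otherwise; each $(i,j)$ lands in exactly one $D_k$. Each $D_k$ has precisely one nonzero entry in every row and every column, following a permutation $\sigma_k$ of $\{1,\dots,n\}$, so inside $M_n(B(\cl{H}))$ one can factor $D_k=A_k\,(U_k\otimes I_{\cl{H}})$, where $U_k\in M_n$ is the scalar permutation matrix of $\sigma_k$ and $A_k=\mathrm{diag}\bigl(T_{1,\sigma_k(1)},\dots,T_{n,\sigma_k(n)}\bigr)$. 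Since $U_k\otimes I_{\cl{H}}$ is unitary, $\|D_k\|_{op}\le\|A_k\|_{op}=\max_i\|T_{i,\sigma_k(i)}\|_{op}\le\max_{ij}\|T_{ij}\|_{op}$, and summing over $k$ gives $\|[T_{ij}]\|_{op}\le n\max_{ij}\|T_{ij}\|_{op}\le n\max_{ij}\|T_{ij}\|_M$. The only real subtlety — and the source of the clean factor $n$ — is the observation that a matrix with at most one nonzero block per row and column has operator norm bounded by the largest norm of its blocks, with no dimensional loss; everything else is bookkeeping with matrix units.
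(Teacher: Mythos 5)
Your proof is correct, and its main step takes a genuinely different route from the paper's. The reduction is the same in both: the operator norm on $S$ agrees with the order norm $\|\cdot\|_h$ on $S_h$ (because $S^+=S\cap B(\cl{H})^+$ and $I\in S$) and hence is an order (semi)norm dominated by $\|\cdot\|_M$; the paper simply cites this, while you spell out the triangle-inequality/infimum argument, which is fine. For the block estimate, the paper uses the one-line Cauchy--Schwarz bound $\|[T_{ij}]\|_{op}\le\bigl(\sum_{i,j}\|T_{ij}\|_{op}^2\bigr)^{1/2}$, which then gives $n\cdot\max_{ij}\|T_{ij}\|_{op}$ since there are $n^2$ entries; you instead split $[T_{ij}]$ into its $n$ cyclic diagonals $D_k=A_k(U_k\otimes I_{\cl{H}})$ with $A_k$ block diagonal and $U_k$ a permutation unitary, so $\|D_k\|_{op}\le\max_{ij}\|T_{ij}\|_{op}$ and the triangle inequality gives the factor $n$. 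Your factorization and the norm of a block-diagonal operator are both correct, so the argument is complete. The paper's route is shorter and in fact yields the sharper intermediate bound $\bigl(\sum_{i,j}\|T_{ij}\|_{op}^2\bigr)^{1/2}$, which can be much smaller than $n\cdot\max_{ij}\|T_{ij}\|_{op}$; your route is purely algebraic (unitary invariance plus block diagonals, no vector computation) and transfers verbatim to any unitarily invariant matrix-norm setting, but gives exactly the factor $n$ and no better. Either way, only the $n\cdot\max$ bound is used later, so both proofs serve the lemma equally well.
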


\begin{proof}
For every $T_{ij} \in S \subset B(H)$, a direct calculation shows that $|| [T_{ij}] ||_{op} \leq ( \sum_{i,j=1}^n || T_{ij} ||_{op}^2 )^{1/2}$. Since the operator norm on $S$ is also an order norm, $|| T_{ij} ||_{op} \leq || T_{ij} ||_M$ and the result follows. 
\end{proof}

\subsection{Matrix-ordered duals of operator systems}\label{Section:dualopsys}
Given an operator system $(S, \{M_n(S)^+\}_{n=1}^\infty, e)$, its underlying space is the AOU space $(S, S^+, e)$ of which the ordered dual is $S' = (S', (S')^+)$. A positive linear functional $f$ on $M_n(S)$ is identified to the matrix $[f_{ij}] \in M_n(S')$, where $f_{ij}(x) := f( E_{ij} \otimes x)$.  
By \cite[Theorem 4.3]{PTT}, this identification endows $S'$ a canonical matrix ordering:
\begin{equation*}
[f_{ij}] \in M_n(S')^+ \overset{\text{def}}{\iff} f( [v_{ij}] ) = \sum_{ij} f_{ij}(v_{ij}) \; \text{is positive.}
\end{equation*} 
 
On the other hand, a functional $f$ on $M_n(S)$ can be identified to $F \colon S \to M_n$ via $F(x) := [ f(E_{ij} \otimes x) ] = [f_{ij}(x)]$. By \cite[Theorem 6.1]{Pa2}, $f$ is positive if and only if $F$ is $n$-positive, if and only if $F$ is completely positive. Therefore, we obtain the following identification:
\begin{equation}\label{DualMatrixOrder}
[f_{ij}] \in M_n(S')^+ \overset{\text{def}}{\iff} F(x) = [f_{ij}(x)] \; \text{is completely positive.}
\end{equation}
We simply denote it by $M_n(S')^+ \cong CP(S, M_n)$. The identification $f \longleftrightarrow F$ in \cite[Chapter 6]{Pa2} has a factor of $n$ and $\frac{1}{n}$. We omitted it as it does not affect complete positivity. 

\begin{defn}[Matrix-ordered dual]\label{OSdual}
We call this matrix-ordered $\ast$-vector space $(S', \{ M_n(S')^+ \}_{n=1}^{\infty})$ the \textbf{matrix-ordered dual} of $S$, and simply denote it by $S'$ whenever the content is clear. 

Following the discussion after \cite[Theorem 4.3]{PTT}, the identification $f \longleftrightarrow F$ asserts that the weak*-topology on $S'$ endows $M_n(S')$ a topology that is equivalent to the weak*-topology on $M_n(S)'$. We call this topology, unambiguously, the \textit{weak*-topology} on $M_n(S')$.
\end{defn}

\begin{remark}\label{Remark:ordered_dual}
Some authors take the matrix-ordered dual to be the \textit{algebraic dual} $S^d$ of $S$, equipped with the same cone $M_n(S^d)^+ \cong CP(S, M_n)$. However, by \cite[Lemma 4.2]{PTT}, its complex span is again $M_n(S')$, so there is no loss of generality to replace $S^d$ with $S'$, which already has the weak*-topology.
The fact that $S'$ and $M_n(S')$ are topological vector spaces turns out to be crucial in \S \ref{Section:Projective limits}. 

Also, recall that given an operator space $V$, the \textit{operator space dual} is the underlying space $V'$ equipped with the operator space structure given by $M_n(V') \cong CB(V, M_n)$ complete norm isometrically, see \cite{BP, ER}. In a similar vein, for operator system $S$, we have $M_n(S')^+ \cong CP(S, M_n)$, complete order isomorphically. The Wittstock's decomposition theorem \cite[Theorem 8.5]{Pa2} asserts that $M_n(S')^+$ spans $M_n(S')$. 
We remark that  if $S^d$ turns out to be an operator system, then $S^d = S'$. 
\end{remark}

There exists infinite-dimensional operator system whose matrix-ordered dual is as well an operator system. For example, Paulsen and the author in \cite{NP2016} constructed the operator Hilbert system $SOH$, whose matrix-ordered dual remains to be an operator system. Below we give another example. 

\begin{exam}
Given an operator space $V$, the \textit{Paulsen system} $S(V)$ is 
	\begin{equation*}
		S(V) := \left\{ \begin{bmatrix} \lambda I & X \\ Y^* & \mu I	\end{bmatrix} \in M_2(B(\mathcal{H})) \colon \lambda, \mu \in \bb{C}, X, Y \in V 			\right\}.
	\end{equation*}
In \cite{Pa2}, it is shown that $S(V)$ is independent of the representation $V\subset B(\mathcal{H})$, up to complete order isomorphism. One can check that the \textit{trace} functional
	\begin{equation*}
		tr \left( \begin{bmatrix} \lambda I & X \\ Y^* & \mu I	\end{bmatrix} \right) := \lambda + \mu
	\end{equation*}
is an Archimedean matrix order unit for $S(V)'$; thus, $S(V)'$ is an operator system. 
\end{exam}

\subsection{Inductive limits}\label{inductlim}
In \cite{MT}, Mawhinney and Todorov constructed inductive limits in \textbf{OU}, \textbf{AOU}, \textbf{MOU}, and \textbf{OS}. In this subsection we summarize their main results on \textbf{AOU} and \textbf{OS}. We start with inductive limit in a general category \textbf{C}. 

\begin{defn}
Let \textbf{C} be a category. An \textbf{inductive sequence} in \textbf{C} is a sequence of pair $(A_k, f_k)_{k \in \mathbb{N}}$, where $A_k$ is an object and $f_k$ is a morphism such that $f_k \colon A_k \to A_{k+1}$, for each $k$. 
To avoid excessive notation, we denote it by $(A_k, f_k)$ whenever the content is clear. We call $f_k$ the connecting morphisms. Observe that for $l > k$, $f_{k, l} := f_{l,l} \circ f_{l-1} \circ \dots \circ f_k$ with $f_{k,k} := id_{A_k}$ is a morphism from $A_k$ to $A_l$. 

A pair $(A, \{g_k \}_{k\in \mathbb{N}})$, where $A$ is an object in \textbf{C} and for each $k \in \mathbb{N}$, $g_k \colon A_k \to A$ is a morphism, is said to be \textbf{compatible} with $(A_k, f_k)$, provided for each $k \in \mathbb{N}$, $g_{k+1} \circ f_k  = g_k$.
An \textbf{inductive limit} of $(A_k, f_k)$ is a compatible pair $(A_{\infty}, \{f_{k, \infty}\}_{k\in\mathbb{N}})$ that satisfies the universal property: If $(B, \{g_k\}_{k \in \mathbb{N}})$ is another compatible pair with $(A_k, f_k)$, then there exists a unique morphism $u \colon A_{\infty} \to B$ such that $u \circ f_{k,\infty} = g_k $, for each $k$. 

If $(A_k, f_k)$ has an inductive limit, then it is unique up to isomorphism in \textbf{C}, and it will be denoted  $(A_{\infty}, \{f_{k,\infty}\}_{k\in\mathbb{N}} )$ or $(A_{\infty}, \{f_{k,\infty}\})  = \inductlim{C}(A_k, f_k)$, or $A_{\infty} = \inductlim{C}A_k$ whenever the content is clear. 
\end{defn}

\subsubsection{OU and AOU spaces}\label{inductiveOUandAOU}
We omit the details in \cite[\S 3]{MT} but outline some basic facts briefly. Given an inductive sequence  $( (V_k, V_k^+, e_k), \phi_k)$ in \textbf{OU}, we consider the vector subspace $V_{\infty}^0$ in $\prod_{k\in\mathbb{N}} V_k$, where
	\begin{equation*}
		V_{\infty}^0 = \{ (x_k) \in \prod_{k\in\mathbb{N}}(V_k) \colon \phi_k(x_k) = x_{k+1}, \forall k \in \mathbb{N} \}.	
	\end{equation*}
Let $N^0 = \{ (x_k) \in V_{\infty}^0 \colon \exists m \in \mathbb{N} \; \text{so that} \; \forall k \geq m, x_k = 0 \}$ and $\ddot{V}_{\infty} := V_{\infty}^0 / N^0$. There are unital positive maps $\ddot{\phi}_{k,\infty} \colon V_k \to \ddot{V}_{\infty}$ that satisfy the following:
	\begin{enumerate}[label={\upshape(\roman*)}, align=left, widest=iii, leftmargin=*]
		\item 	$\ddot{V}_{\infty} = \cup_{k\in\mathbb{N}} \ddot{\phi}_{k,\infty} (V_k)$, and $\ddot{\phi}_{k,\infty}(x_k) = \ddot{ \phi}_{l,\infty}(x_l)$ if and only if $\phi_{k,m}(x_k) = \phi_{l,m}(x_l)$ for some $m > k,l$. 
		\item 	$\ddot{V}_{\infty}^+$ is the set of $\ddot{\phi}_{k,\infty}(x_k)$ such that there exists $m \geq k$ with $\phi_{k,m}(V_m^+)$. Moreover, $\ddot{V}_{\infty}^+ = \cup_{k\in\mathbb{N}} \ddot{\phi}_{k,\infty} (V_k^+)$. 
		\item 	$(\ddot{V}_{\infty}, \ddot{V}_{\infty}^+)$ together with $\ddot{e} = \ddot{\phi}_{k,\infty}(e_k)$ is an order unit space.
		\item 	$( \ddot{V}_{\infty}, \{\ddot{\phi}_{k,\infty} \} ) = \inductlim{OU}(V_k, \phi_k)$. 
	\end{enumerate}

Given an inductive sequence $(V_k, \phi_k)$ in \textbf{AOU}, we first obtain $\ddot{V}_{\infty}$ in \textbf{OU} and then Archimedeanize $\ddot{V}_{\infty}$ as follows. Let $x = \ddot{\phi}_{k,\infty}(x_k) \in \ddot{V}_{\infty}$ and let $N$ be the \textit{null space} 
	\begin{equation}
		N := \{ x \in \ddot{V}_{\infty} \colon \lim_{m\to\infty} || \phi_{k,m} (x_k) ||^m = 0 \},
	\end{equation} 
where $|| \cdot ||^m$ is any order norm on $V_m$. It follows that $N$ is the kernel of an, hence any, order seminorm $|| \cdot ||^{\infty}$ on $\ddot{V}_{\infty}$. 
Let $V_{\infty} := \ddot{V}_{\infty}/N$, then $V_{\infty}$ is the Archimedeanization of $\ddot{V}_{\infty}$. 

\begin{remark}\label{inductiveAOU}
Let $q_V \colon \ddot{V}_{\infty} \to V_{\infty}$ be the canonical quotient map and $\phi_{k,\infty} = q_V \circ \ddot{\phi}_{k,\infty}$. Then the pair $(V_{\infty}, \{\phi_{k,\infty} \})$ satisfies the following:  
	\begin{enumerate}[label={\upshape(\roman*)}, align=left, widest=iii, leftmargin=*]
		\item $V_{\infty} = \cup_{k\in\mathbb{N}}\phi_{k,\infty} (V_k)$, and $\phi_{k,\infty}(x_k) = \phi_{l,\infty}(x_l)$ if and only if $\ddot{\phi}_{k,\infty}(x_k) - \ddot{\phi}_{l,\infty}(x_l) \in N$, if and only if $\lim_{m\to \infty} || \phi_{k,m} (x_k) - \phi_{l,m}(x_l) ||^m = 0$. 
		\item $V_{\infty}^+$ is the set of $\phi_{k,\infty} (x_k)$, where for every $r > 0$, there exist $m \geq l > k$ such that $y_l \in V_l$ with $\ddot{\phi}_{l,\infty}(y_l) \in N$, and $re_m + \phi_{k,m}(x_k) + \phi_{l,m}(y_l) \in V_m^+$. 
		\item 	$(V_{\infty}, V_{\infty}^+)$ together with $e = {\phi}_{k,\infty}(e_k)$ is an AOU space.
		\item $(V_{\infty}, \{\phi_{k,\infty} \}) = \inductlim{AOU}(V_k, \phi_k)$. 
	\end{enumerate} 
\end{remark}

By dualizing the inductive sequence $(V_k, \phi_k)$ in \textbf{OU}, we obtain the following reverse sequence of ordered duals: 
\begin{equation*}
V_1' \overset{\phi_{k}^{'}}{\longleftarrow} \dots \overset{\phi_{k-1}^{'}}{\longleftarrow} V_k' \overset{\phi_{k}^{'}}{\longleftarrow} V_{k+1}' \overset{\phi_{k+1}^{'}}{\longleftarrow} \dots 
\end{equation*}

Since each $\phi_k$ is unital, $\phi_k'$ maps state space to state space, we have the following reverse sequence of compact Hausdorff topological spaces with respect to weak*-topology:
\begin{equation*}
\mathfrak{S}(V_1) \overset{\phi_{1}^{'}}{\longleftarrow}	\dots \overset{\phi_{k-1}^{'}}{\longleftarrow} \mathfrak{S}(V_k) \overset{\phi_{k}^{'}}{\longleftarrow}	\mathfrak{S}(V_{k+1}) \overset{\phi_{k+1}^{'}}{\longleftarrow} \dots 
\end{equation*}
This is a \textit{projective sequence} in \textbf{TOP} whose objects are topological spaces and morphisms are continuous maps. By \cite{bourbakiTOP}, its projective limit is 
\begin{equation*}
\projectlim{TOP} \mathfrak{S}(V_k) = \left\{ (f_k) \in \prod_{k\in \mathbb{N}} \mathfrak{S}(V_k) \colon f_k = \phi_{k+1}' (f_{k+1}) \right\}, 
\end{equation*}
together with the product topology. By the Tychonoff's theorem, it is a compact and Hausdorff topological space. 

Moreover, there is a homeomorphism $\theta \colon \mathfrak{S}(\ddot{V}_{\infty}) \to \projectlim{TOP} \mathfrak{S}(V_k)$ by $\theta(f) = (f_k)$, where $f_k \in \mathfrak{S}(V_k)$ is defined to be $f_k(v_k) := f( \ddot{\phi}_{k,\infty} (v_k) )$. When $(V_k, \phi_k)$ is in \textbf{AOU}, $\mathfrak{S}(V_{\infty})$ is also homeomorphic to $\projectlim{TOP} \mathfrak{S}(V_k)$ by a standard lifting technique on quotient, see \cite[Proposition 3.16]{MT}. 

\subsubsection{MOU and OS} Again we briefly describe the constructions here.
Let $(S_k, \phi_k)$ be an inductive sequence in \textbf{MOU}. At each matrix level, we have an inductive sequence $(M_n(S_k), \phi_k^{(n)})$ in \textbf{OU}. Hence, for each $n \in \mathbb{N}$, we obtain $\inductlim{OU} M_n(S_k)$. Let $\ddot{S}_{\infty} = \inductlim{OU} S_k$. It turns out that $M_n(\ddot{S}_{\infty})$, equipped with canonical structures, is order isomorphic to $\inductlim{OU} M_n(S_k)$. Moreover, the maps $\ddot{\phi}_{k,\infty}$ become unital completely positive; then it follows that the pair $(\ddot{S}_{\infty}, \{\ddot{\phi}_{k,\infty} \}) = \inductlim{MOU}(S_k,\phi_k)$.

When $(S_k,\phi_k)$ is in \textbf{OS}, we archimedeanize $\inductlim{MOU}(S_k,\phi_k)$ to obtain an operator system $S_{\infty}$. It is shown that $S_{\infty}$ with unital completely positive maps $\phi_{k,\infty} := q_S \circ \ddot{\phi}_{k,\infty}$ is the inductive limit of $(S_k, \phi_k)$ in \textbf{OS}. 

\begin{remark}
Let $(S_k, \phi_k)$ be an inductive sequence in \textbf{OS}. Then for each $n \in \mathbb{N}$, $(M_n(S_k), \phi_k^{(n)})$ is an inductive sequence in \textbf{AOU}. An alternative way to construct $\inductlim{OS}S_k$ is to first obtain $S_{\infty} = \inductlim{AOU} S_k$, and then identify $M_n(S_{\infty})$ to $\inductlim{AOU} M_n(S_k)$ for each $n$. This argument is valid due to the fact that Archimedeanization from \textbf{MOU} to \textbf{OS} is obtained by forming Archimedeanization from \textbf{OU} to \textbf{AOU} at each matrix level.
\end{remark}

\section{Projective limits}\label{Section:Projective limits}
In this section, we construct projective limits in \textbf{AOU} and \textbf{OS}. We start with the definition of projective limit in a general category \textbf{C}. 

\begin{defn}
Let \textbf{C} be a category. A \textbf{projective sequence} in \textbf{C} is a sequence of pairs $\{ (A_k, f_k) \}_{k \in \mathbb{N}}$, where $A_k$ is an object and $f_k$ is a morphism such that $f_k \colon A_{k+1} \to A_k$, for each $k$.
To avoid excessive notation, we denote it by $(A_k, f_k)$ whenever the content is clear. We call $f_k$ the connecting morphisms. Observe that for $l < k$, $f_{k, l} := f_{l} \circ \dots \circ f_{k-1} \circ f_{k,k}$ with $f_{k,k} := id_{A_k}$ is a morphism from $A_k$ to $A_l$.  

A pair $(A, \{p_k \}_{k\in \mathbb{N}})$, where $A$ is an object in \textbf{C} and for each $k \in \mathbb{N}$, $p_k \colon A \to A_k$ is a morphism, is said to be \textbf{compatible} with $(A_k, f_k)$, provided for each $k \in \mathbb{N}$, $f_k \circ p_{k+1} = p_k$. A \textbf{projective limit} of $(A_k, f_k)$ is a compatible pair $(A, \{p_k\}_{k\in\mathbb{N}})$ that satisfies the universal property: If $(B, \{q_k\}_{k \in \mathbb{N}})$ is another compatible pair with $(A_k, f_k)$, then there exists a unique morphism $u \colon B \to A$ such that $p_k \circ u = q_k $, for each $k$. 
\end{defn}

\begin{remark}
If $(A_k, f_k)$ has a projective limit $(A, \{ p_{k}\}_{k\in\mathbb{N} }  )$, then it is unique up to isomorphism in \textbf{C}, and it will be denoted  $(A, \{p_k\}_{k\in\mathbb{N} })$ or $(A, \{p_k\}) = \projectlim{C}(A_k, f_k)$, or $A=\projectlim{C}A_k$ whenever the content is clear. We summarize the above in the following diagram.
\begin{equation}\label{Cuniversal_property}
\xymatrix{
A_1	&\ar[l]_{f_1}	\dots	&\ar[l]_{f_{k-1}} A_k	&\ar[l]_{f_k} A_{k+1}	& \ar[l]_{f_{k+1}} \dots \\
&	&	A	\ar[u]^{p_k} \ar@/_0.5pc/[ur]^{p_{k+1}} \ar@/^0.5pc/@{=>}[ul] \ar@/^1.0pc/[ull]^{p_1} \ar@/_1.0pc/@{=>}[urr]&	& 
}
\end{equation} 
\end{remark}

\subsection{Projective limits of TVS}\label{projectlimTVS}
Our ambient category is topological vector spaces with continuous linear maps, denoted by \textbf{TVS}. 
The material in this subsection is standard, see \cite{bourbakiTOP}. We recall a few facts that shall be used in the sequel. 

Given a family $\{V_i\}_{i \in I}$ of topological vector spaces, the Cartesian product $X := \prod_{i \in I} V_i = \{v=(v_i) \colon v_i \in V_i, i \in I \}$ endowed with the product topology, is a topological space. For each $i \in I$, the \textit{projection} $\pi_i \colon X \to V_i$ by $\pi_i (v) := v_i$ is a continuous map. 
Also, $X$ equipped with the canonical vector addition and scalar multiplication is a topological vector space, hence is a \textbf{TVS}. If each $V_i$ is Hausdorff, then so is $X$ by the Tychonoff's theorem. 
Henceforth, we take $I = \mathbb{N}$ with the usual partial ordering. 

\begin{thm}\label{TVSprojective_limit}
Suppose $(V_k, \phi_k)_{k \in \mathbb{N}}$ is a projective sequence in \textbf{TVS}. Define 
\begin{equation*}
V := \left\{ v = (v_k) \in \prod_{k \in \mathbb{N}} V_k \colon \phi_k(v_{k+1}) = v_k, \; \text{for all} \; k \in \mathbb{N} \right\},
\end{equation*} 
and define the map $p_k \colon V \to V_k$ by $p_k := \pi_k \circ \iota$ for each $k \in \mathbb{N}$, where $\iota$ is the inclusion map from $V$ to $X$. Then $(V, \{ p_k \} )$ is the projective limit of $(V_k, \phi_k)$ in \textbf{TVS}. 
\end{thm}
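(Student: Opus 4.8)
The plan is to verify directly that the explicit candidate $(V, \{p_k\})$ meets every requirement of a projective limit in \textbf{TVS}. First I would check that $V$ is a linear subspace of the product $X = \prod_{k \in \mathbb{N}} V_k$: since each $\phi_k$ is linear, the defining relations $\phi_k(v_{k+1}) = v_k$ are stable under addition and scalar multiplication, so $V$ is a vector subspace of $X$; with the subspace topology it is again a topological vector space (and Hausdorff whenever each $V_k$ is, by the remarks preceding the theorem). The maps $p_k = \pi_k \circ \iota$ are continuous and linear, being composites of continuous linear maps. Compatibility is immediate from the description of $V$: for $v = (v_j)_j \in V$ one has $(\phi_k \circ p_{k+1})(v) = \phi_k(v_{k+1}) = v_k = p_k(v)$.

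The substance of the proof is the universal property. Given another compatible pair $(B, \{q_k\}_{k\in\mathbb{N}})$ in \textbf{TVS}, I would define $u \colon B \to X$ by $u(b) := (q_k(b))_{k\in\mathbb{N}}$. The compatibility hypothesis $\phi_k \circ q_{k+1} = q_k$ says precisely that $u(b)$ satisfies the defining relations of $V$, so $u$ in fact takes values in $V$. Linearity of $u$ follows from linearity of each $q_k$. For continuity I would invoke the universal property of the product topology: a map into $X$ is continuous if and only if each coordinate map $\pi_k \circ u = q_k$ is continuous, which holds by assumption; since $V$ carries the subspace topology, $u \colon B \to V$ is then continuous as well. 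By construction $p_k \circ u = \pi_k \circ \iota \circ u = q_k$ for every $k$, so $u$ is a morphism intertwining the structure maps. Uniqueness is forced: if $u'$ also satisfies $p_k \circ u' = q_k$ for all $k$, then the $k$-th coordinate of $u'(b)$ equals $q_k(b)$ for every $k$ and every $b$, whence $u' = u$ as maps into $X$, hence as maps into $V$.

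I do not expect a genuine obstacle. The only point requiring slight care is ensuring that the factorization $u$ lands inside $V$ and not merely in $X$ --- this is exactly where the compatibility of $(B, \{q_k\})$ enters --- together with the observation that continuity of $u$ as a map into $V$ is deduced from continuity into $X$ via the subspace topology. The remainder is a routine unwinding of the definitions of the product topology and of morphisms in \textbf{TVS}.
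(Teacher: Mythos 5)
Your argument is correct and follows essentially the same route as the paper, which declares the existence part standard and records exactly your key steps (the factorization $u(b)=(q_k(b))$ landing in $V$ by compatibility, continuity via the product/initial topology, and coordinatewise uniqueness) in the remark and Proposition~\ref{uniquemorphism} following the theorem. No gaps.
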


The existence part is standard; the uniqueness part will be emphasized in the following remark and proposition.

\begin{remark}
Indeed, $V$ is a closed subspace of $X$ with respect to the product topology, which is equivalent to the initial topology induced by $\{\pi_k\}_{k \in \mathbb{N}}$. The subspace topology on $V$ is equivalent to the \textit{initial topology} induced by $\{p_k\}_{k \in \mathbb{N}}$. Hence, if we first construct the projective limit $(W, \{q_k\}_{k \in \mathbb{N}})$ of $(V_k, \phi_k)$ in \textbf{VS}, and then equip $W$ with the initial topology induced by $\{q_k\}_{k \in \mathbb{N}}$, then $V$ and $W$ are isomorphic in \textbf{TVS}.  This observation and the next proposition conclude the uniqueness of projective limit in \textbf{TVS}. 
\end{remark}

\begin{prop}\label{uniquemorphism}
Let $(V, \{p_k\})$ be given as in Theorem \ref{TVSprojective_limit} and $W$ be a topological vector space. Then a liner map $\psi \colon W \to V$ is continuous if and only if $p_k \circ \psi \colon W \to V_k$ is continuous for each $k \in \mathbb{N}$. Moreover, if $(W, \{q_k\})$ in \textbf{TVS} is compatible with $(V_k, \phi_k)$, then there exists unique continuous linear map $\psi \colon W \to V$ such that $p_k \circ \psi= q_k$, for each $k \in \mathbb{N}$.
\end{prop}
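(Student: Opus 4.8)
The plan is to reduce everything to the universal properties of the product topology on $X = \prod_{k\in\mathbb{N}} V_k$ and of the subspace topology on $V \subset X$. First I would record the easy half: each $p_k = \pi_k \circ \iota$ is continuous, being the composition of the inclusion $\iota \colon V \to X$ (continuous since $V$ carries the subspace topology) with the coordinate projection $\pi_k$; hence if $\psi \colon W \to V$ is continuous, so is every $p_k \circ \psi$.

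For the converse in the first assertion, suppose $p_k \circ \psi$ is continuous for every $k$. Since $\pi_k \circ (\iota \circ \psi) = p_k \circ \psi$ is continuous for each $k$, the universal property of the product topology yields that $\iota \circ \psi \colon W \to X$ is continuous. As $V$ carries the subspace topology inherited from $X$, the universal property of the subspace topology then gives that $\psi \colon W \to V$ is continuous. Equivalently, one may simply invoke the preceding remark: the subspace topology on $V$ coincides with the initial topology induced by $\{p_k\}_{k\in\mathbb{N}}$, and a linear map into a space carrying an initial topology is continuous exactly when all of its compositions with the defining maps are.

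For the second assertion, given a compatible pair $(W, \{q_k\})$ in \textbf{TVS}, I would define $\psi \colon W \to V$ by $\psi(w) := (q_k(w))_{k\in\mathbb{N}}$. Linearity is inherited coordinatewise from the $q_k$, and $\psi(w)$ genuinely lies in $V$ rather than merely in $X$ because the compatibility relation $\phi_k \circ q_{k+1} = q_k$ is precisely the condition $\phi_k(v_{k+1}) = v_k$ that cuts out $V$ inside $\prod_k V_k$. Since $p_k \circ \psi = q_k$ is continuous for every $k$, the first part of the proposition gives continuity of $\psi$, and clearly $p_k \circ \psi = q_k$. Uniqueness is immediate: if $\psi'$ also satisfies $p_k \circ \psi' = q_k$ for all $k$, then for each $w \in W$ the vectors $\psi'(w)$ and $\psi(w)$ agree in every coordinate, hence $\psi' = \psi$.

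I do not expect any serious obstacle; the proof is essentially bookkeeping with the two universal properties. The only points deserving care are keeping straight that $V$ is topologized as a subspace of the product (so that both universal properties are available) and observing that the compatibility hypothesis is exactly what is needed for the candidate map to take values in $V$.
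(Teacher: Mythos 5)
Your proposal is correct and follows essentially the same route as the paper: the first assertion via the initial (product/subspace) topology characterization, and the second by defining $\psi(w) = (q_k(w))_{k\in\mathbb{N}}$, with compatibility placing the image in $V$ and uniqueness read off coordinatewise.
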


\begin{proof}
The first statement is a direct consequence of characterization of initial topology. For the second statement, note that the morphisms $q_k$ determine a unique morphism $\psi \colon W \to X$ by $\psi(w) := (q_k(w))$. The compatibility condition asserts that the image of $\psi$ is $V$ and $p_k \circ \psi = q_k$.  
\end{proof}

\begin{remark}
It follows that the uniqueness of projective limit in \textbf{AOU} and \textbf{OS} are deducible from this proposition. It is also the primary reason the author considered \textbf{TVS} instead of \textbf{VS} and emphasized the weak*-topology on matrix-ordered dual in Remark \ref{Remark:ordered_dual}.
\end{remark}

\subsection{Projective limits of AOU}\label{projectlimAOU}
In this subsection, let $(V_k, V_k^+, e_k )_{k \in \mathbb{N}}$ be a sequence of AOU spaces and $\phi_k \colon V_{k+1}$ $\to V_k$ be a unital positive map for each $k \in \mathbb{N}$. We simply write $(V_k, \phi_k)$ for such projective sequence in \textbf{AOU}. 
To construct a candidate for the projective limit of $(V_k, \phi_k)$ in \textbf{AOU}, we begin by working in the ambient space \textbf{TVS}. By the discussion in \S \ref{AOUspaces}, \textbf{AOU} is a subcategory of \textbf{TVS}.

Let us apply the forgetful functor from \textbf{AOU} to \textbf{TVS} and obtain the projective limit $(V, \{ p_k \}) =$ $\projectlim{TVS} (V_k, \phi_k)$ as in Theorem \ref{TVSprojective_limit}. Equip $V$ with canonical involution $v^* := (v_k^*)$, and define $V_h := \{ v \in V \colon v = v^*\}$ and $V^+ := \prod_{k \in \mathbb{N}} V_k^+$. Then $(V, V^+)$ is an ordered $\ast$-vector space such that diagram \eqref{Cuniversal_property} commutes in \textbf{TVS}.

We now proceed to construct a candidate $V_{\infty} \subset V$ in \textbf{AOU}. 
For each $k \in \mathbb{N}$, we denote the minimal and maximal order norms for $v_k \in V_k$ by $|| v_k ||^k_{m}$ and $|| v_k ||^k_{M}$, respectively. Recall that all order norms on $V_k$ are bounded between $|| \cdot ||^k_{m}$ and $|| \cdot ||^k_{M} $, and they all coincide on $(V_k)_h$. For Hermitian $v_k \in (V_k)_h$, we simply write $||v_k||^k_h$ for the order seminorm determined by $e_k$.  
We define for $v = (v_k) \in V$, 
	\begin{equation*}
		|| v ||_{\max}	:= 	\sup_k \{ || v_k ||^k_{M} \} 	\qquad \text{and} 	\qquad	|| v ||_{\min}	:=		\sup_k \{ || v_k ||^k_{m} \}.
	\end{equation*}
If $v \in V_h$, then $||v||_{\min} = ||v||_{\max}$, and we will write $||v ||_h$ to avoid excessive notation. Define
\begin{equation}\label{Vinfty}
V_{\infty} := \{ v \in V \colon || v ||_{\min} < \infty \}.
\end{equation}
Let $(V_{\infty})_h := V_{\infty} \cap V_h$ and $V_{\infty}^+ := V_{\infty} \cap V^+$. It follows that $(V_{\infty}, V_{\infty}^+)$ is an ordered $\ast$-subspace of $V$. We first claim that $(V_{\infty}, V_{\infty}^+)$ with $e := (e_k) \in V_{\infty}^+$ is indeed an AOU space.

\begin{lemma}\label{orderunit}
The triple $(V_{\infty}, V_{\infty}^+, e)$ is an AOU space.
\end{lemma}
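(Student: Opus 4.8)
The plan is to derive all the AOU axioms for $(V_\infty, V_\infty^+, e)$ coordinatewise from the corresponding properties of the AOU spaces $V_k$, using only that $V_\infty$ is a $\ast$-subspace of $V$ (already noted), that $e=(e_k)\in V_\infty^+$, and the description of the order seminorms recalled above.

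I would first observe that $V_\infty^+ = V_\infty \cap V^+$ is automatically a proper cone in $(V_\infty)_h$: it is the intersection of the subspace $V_\infty$ with the cone $V^+ = \prod_k V_k^+$, whose properness is inherited coordinatewise from the proper cones $V_k^+$ of the AOU spaces $V_k$. Hence $(V_\infty, V_\infty^+)$ is an ordered $\ast$-vector space. Next, to see that $e = (e_k)$ is an order unit for $(V_\infty)_h$, fix $v = (v_k) \in (V_\infty)_h$. Since $\|\cdot\|_h$ coincides with $\|\cdot\|_{\min}$ on Hermitian elements, membership $v \in V_\infty$ gives $r := \|v\|_h + 1 = \bigl(\sup_k \|v_k\|_h^k\bigr) + 1 < \infty$, and then $r > \|v_k\|_h^k$ for every $k$, so by the definition \eqref{OrderNorm} of the order seminorm each $re_k \pm v_k$ lies in $V_k^+$. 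Therefore $re \pm v \in V^+$; as $e, v \in V_\infty$ we get $re \pm v \in V_\infty^+$, i.e.\ $-re \le v \le re$, so $e$ is an order unit.

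Finally, for the Archimedean property, assume $v = (v_k) \in (V_\infty)_h$ satisfies $re + v \in V_\infty^+$ for all $r > 0$. Reading this in each coordinate, $re_k + v_k \in V_k^+$ for every $r > 0$ and every $k$; since $e_k$ is an Archimedean order unit for $V_k$, this forces $v_k \in V_k^+$ for every $k$, whence $v \in V^+ \cap V_\infty = V_\infty^+$. This completes the verification.

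I do not expect a genuine obstacle here; the one point to handle with a little care is the order unit step, where one should avoid relying on attainment of the infimum in \eqref{OrderNorm} (which would need closedness of the $V_k^+$, e.g.\ via \cite[Theorem 2.30]{PT}). Choosing $r$ strictly larger than $\|v\|_h$ circumvents this, so the argument uses nothing about the $V_k$ beyond each $e_k$ being an order unit, and the Archimedean hypothesis on the $V_k$ enters only in the last step.
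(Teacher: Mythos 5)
Your proof is correct and follows essentially the same coordinatewise argument as the paper: $e$ is an order unit because $\|v\|_h=\sup_k\|v_k\|_h^k<\infty$, and Archimedeanity passes to the limit coordinate by coordinate. The only (harmless) difference is that the paper takes $r=\|v\|_h$ exactly, implicitly using the Archimedean property of each $e_k$ to get $\|v_k\|_h^k e_k + v_k\in V_k^+$, whereas you take $r$ strictly larger than $\|v\|_h$, which avoids relying on attainment of the infimum in \eqref{OrderNorm}.
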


\begin{proof}
Given $v = (v_k) \in (V_{\infty})_h$, let $r = || v ||_h$, then for each $k \in \mathbb{N}$, $re_k + v_k \geq || v_k ||^k_h e_k + v_k \in V_k^+$. Thus $re+v \in V_{\infty}^+$, and $e$ is an order unit for $(V_{\infty}, V_{\infty}^+)$.
To see that $e$ is Archimedean, suppose $v = (v_k) \in (V_{\infty})_h$ and $r e + v \in V_{\infty}^+$ for each $r > 0$. For each $k \in \mathbb{N}$, $r e_k + v_k \in V_k^+$ asserts that $v_k \in V_k^+$. Hence, $v \in V_{\infty} \cap V^+ = V_{\infty}^+$, and $(V_{\infty}, V_{\infty}^+, e)$ is an AOU space.
\end{proof}

\begin{remark}
Since each $|| \cdot ||^k_{m}$ (resp. $|| \cdot ||^k_{M}$)  is a $\ast$-norm on $V_k$, it is evident that $|| \cdot ||_{\min}$ (resp. $|| \cdot ||_{\max}$) is a $\ast$-norm on $V_{\infty}$. In the next lemma, we will see that $|| \cdot ||_h$ is indeed the order seminorm on $(V_{\infty})_h$ determined by $e$, and thus $|| \cdot ||_{\min}$ and $|| \cdot ||_{\max}$ are order norms on $V_{\infty}$.
We also remark that we can replace $|| \cdot ||_{\min}$ with $|| \cdot ||_{\max}$ in \eqref{Vinfty} since all order norms are equivalent on each $V_k$. 
\end{remark}

\begin{lemma}\label{order_seminorm}
The seminorm $|| \cdot ||_h$ defined on $(V_{\infty})_h$ is the order seminorm determined by $e$. Moreover, $|| \cdot ||_{\min}$ and $|| \cdot ||_{\max}$ are order norms on $(V_{\infty}, V_{\infty}^+, e)$. 
\end{lemma}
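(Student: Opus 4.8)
The plan is to identify the order seminorm on $(V_\infty)_h$ determined by $e$ with the quantity $|| v ||_h=\sup_k|| v_k||^k_h$ already in hand, by reading off the definition \eqref{OrderNorm} together with the explicit description $V_\infty^+=V_\infty\cap\prod_{k\in\mathbb{N}}V_k^+$. By \eqref{OrderNorm}, the order seminorm determined by $e$ sends $v\in(V_\infty)_h$ to $\inf\{\,r>0\colon re\pm v\in V_\infty^+\,\}$, so it is enough to prove both inequalities between this number and $\sup_k|| v_k||^k_h$; once that is done, the ``moreover'' clause follows almost formally.

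For the inequality $\sup_k|| v_k||^k_h\le\inf\{\,r>0\colon re\pm v\in V_\infty^+\,\}$ I would take any admissible $r$, so that $re\pm v\in V_\infty^+\subseteq\prod_k V_k^+$; then $re_k\pm v_k\in V_k^+$ for every $k$, which by \eqref{OrderNorm} applied inside $V_k$ gives $|| v_k||^k_h\le r$, and taking the supremum over $k$ and then the infimum over admissible $r$ gives this half. For the reverse inequality I would fix $r>\sup_k|| v_k||^k_h$, a finite number since $\sup_k|| v_k||^k_h=|| v||_{\min}<\infty$ for $v\in(V_\infty)_h$ by the very definition \eqref{Vinfty} of $V_\infty$. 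For each $k$, since $|| v_k||^k_h<r$ there is $t<r$ with $-te_k\le v_k\le te_k$, and then $re_k\pm v_k=(r-t)e_k+(te_k\pm v_k)$ exhibits $re_k\pm v_k$ as a sum of two elements of the cone $V_k^+$, so $re\pm v\in\prod_k V_k^+$. It then remains to check that $re\pm v$ actually lies in $V_\infty$, and not merely in $V$: this holds because the preceding remark shows $||\cdot||_{\min}$ is a $\ast$-norm on $V_\infty$ and $e,v\in V_\infty$, whence $|| re\pm v||_{\min}\le r\,|| e||_{\min}+|| v||_{\min}<\infty$. Thus $re\pm v\in V_\infty\cap\prod_k V_k^+=V_\infty^+$, so the order seminorm of $v$ is at most $r$; letting $r\downarrow\sup_k|| v_k||^k_h$ completes the identification of $||\cdot||_h$ with the order seminorm determined by $e$.

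For the second assertion, the preceding remark already records that $||\cdot||_{\min}$ and $||\cdot||_{\max}$ are $\ast$-norms on $V_\infty$, and by construction each of them restricts to $||\cdot||_h=\sup_k||\cdot||^k_h$ on $(V_\infty)_h$; since the first part shows this restriction is exactly the order seminorm on $(V_\infty)_h$ determined by $e$, both $||\cdot||_{\min}$ and $||\cdot||_{\max}$ are order seminorms on the AOU space $(V_\infty,V_\infty^+,e)$ of Lemma \ref{orderunit}. Finally, as recalled in \S\ref{AOUspaces}, on an AOU space every order seminorm is automatically an order norm, so $||\cdot||_{\min}$ and $||\cdot||_{\max}$ are order norms on $(V_\infty,V_\infty^+,e)$.

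The argument is routine, and the one step I would single out as needing care is the membership $re\pm v\in V_\infty$ in the reverse inequality: one has to use that the ``supremum of the $V_k$-order-norms'' genuinely defines a finite $\ast$-norm on $V_\infty$---which is exactly the property that the cutoff \eqref{Vinfty} was engineered to guarantee---rather than arguing naively inside the full product $V$.
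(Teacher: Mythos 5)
Your proof is correct and follows essentially the same route as the paper: both identify the order seminorm determined by $e$ with $\sup_k\|v_k\|^k_h$ by passing to the coordinates $re_k\pm v_k\in V_k^+$ for one inequality and producing admissible $r$ coordinatewise for the other, and then deduce the statement about $\|\cdot\|_{\min}$ and $\|\cdot\|_{\max}$ from the coincidence of all order norms on Hermitian elements. The only cosmetic differences are that the paper argues the reverse inequality by contradiction (showing $\|v\|-\varepsilon$ fails to dominate some coordinate, using that $\|v_k\|^k_h e_k\pm v_k\in V_k^+$ by Archimedeanness of $e_k$), whereas you show directly that every $r>\sup_k\|v_k\|^k_h$ is admissible, and your explicit check that $re\pm v\in V_\infty$ is automatic since $V_\infty$ is a $\ast$-subspace of $V$ containing $e$ and $v$.
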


\begin{proof}
Note that $e$ is an order unit for $(V_{\infty})_h$. For $v \in (V_{\infty})_h$, denote $|| v ||$ the order seminorm determined by $e$ as in \eqref{OrderNorm}. We shall show that $|| v || = || v ||_h$, for $v \in (V_{\infty})_h$. Indeed, if $r = ||v||$, then $re \pm v \in V_{\infty}^+$, and for each $k \in \mathbb{N}$, $re_k \pm v_k \in V_k^+$, which implies that $r \geq || v_k ||^k_h$. Hence, $|| v ||_h = \sup_k ||v_k||^k_h \leq r = || v ||$. 

Conversely, let $||v|| > \varepsilon > 0$ and $r_{\varepsilon} = || v || - \varepsilon$. By definition of order seminorm, $r_{\varepsilon} e + v$ or $r_{\varepsilon} e - v$ is not positive. It follows that there exists $k \in \mathbb{N}$ such that $r_{\varepsilon} e_k + v_k$ or $r_{\varepsilon} e_k - v_k$ is not in $V_k^+$. Since $||v_k||^k_h e_k \pm v_k \in V_k^+$, again by definition, $r_{\varepsilon} < ||v_k||^k_h \leq || v ||$; therefore, $|| v || = || v ||_h$. 
The last statement follows from $|| v ||_{\min} = || v ||_h = || v ||_{\max}$, for all $v \in (V_{\infty})_h$. 
\end{proof}

\begin{prop}\label{AOUcompatible}
For each $k\in\mathbb{N}$, let $p_{\infty, k} \colon V_{\infty} \to V_k$ by $p_{\infty, k}:= p_k \circ \iota_{\infty}$,  where $\iota_{\infty}$ is the natural inclusion from $V_{\infty}$ to $V$. Then $p_{\infty,k}$ is a unital positive map. Furthermore, the pair $(V_{\infty}, \{p_{\infty, k} \}_{k \in \mathbb{N}} )$ is compatible with $(V_k, \phi_k)$ in \textbf{AOU}.
\end{prop}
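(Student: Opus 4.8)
The plan is to verify the two assertions in Proposition \ref{AOUcompatible} in turn: first that each $p_{\infty,k}$ is a unital positive map between AOU spaces, and then that the family $\{p_{\infty,k}\}$ is compatible with the connecting maps $\phi_k$. Both should be essentially immediate from the definitions, so the work is bookkeeping rather than genuine difficulty.

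For the first assertion, recall that $\iota_\infty \colon V_\infty \to V$ is the inclusion and $p_k = \pi_k \circ \iota$ is the restriction to $V$ of the coordinate projection $\pi_k \colon X = \prod_j V_j \to V_k$. Hence $p_{\infty,k}(v) = v_k$ for $v = (v_j) \in V_\infty$. Unitality is clear since $e = (e_k)$ and so $p_{\infty,k}(e) = e_k$. For positivity, if $v = (v_j) \in V_\infty^+ = V_\infty \cap V^+ = V_\infty \cap \prod_j V_j^+$, then each coordinate $v_k$ lies in $V_k^+$, so $p_{\infty,k}(v) = v_k \in V_k^+$. Both $V_\infty$ (by Lemma \ref{orderunit}) and $V_k$ are AOU spaces, and linearity of $p_{\infty,k}$ is inherited from $\pi_k$, so $p_{\infty,k}$ is a morphism in \textbf{AOU}. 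It is worth remarking in passing that, by Proposition \ref{upContractive}, $p_{\infty,k}$ is automatically contractive for the minimal order norms, which is consistent with the definition $\|v\|_{\min} = \sup_k \|v_k\|_m^k$.

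For the second assertion, compatibility requires $\phi_k \circ p_{\infty,k+1} = p_{\infty,k}$ for each $k \in \mathbb{N}$. Evaluating on $v = (v_j) \in V_\infty$, the left side is $\phi_k(p_{\infty,k+1}(v)) = \phi_k(v_{k+1})$, while the right side is $p_{\infty,k}(v) = v_k$. But the defining relation of $V_\infty \subset V = \projectlim{TVS}(V_k,\phi_k)$ (see Theorem \ref{TVSprojective_limit} and \eqref{Vinfty}) is precisely $\phi_k(v_{k+1}) = v_k$ for all $k$, so the two sides agree. Thus $(V_\infty, \{p_{\infty,k}\})$ is a compatible pair with $(V_k,\phi_k)$ in \textbf{AOU}.

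I do not anticipate any real obstacle here; the only point requiring a moment's care is making sure the coordinatewise description of $p_{\infty,k}$, of the order $V_\infty^+$, and of the unit $e$ are all correctly in hand from the preceding constructions, and that one is entitled to call $p_{\infty,k}$ a morphism in \textbf{AOU} — which needs both spaces to be AOU, guaranteed by Lemma \ref{orderunit}. The substantive content of this section, namely that $(V_\infty, \{p_{\infty,k}\})$ actually \emph{is} the projective limit (the universal property), is deferred to later results; this proposition only records that it is a legitimate candidate.
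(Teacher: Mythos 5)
Your proposal is correct and follows essentially the same route as the paper: identify $p_{\infty,k}(v)=v_k$, check unitality and positivity coordinatewise, and obtain compatibility from the defining relation $\phi_k(v_{k+1})=v_k$ of the projective limit in \textbf{TVS}. The paper's proof is just a terser version of the same computation, so there is nothing to add.
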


\begin{proof}
For each $k \in \mathbb{N}$, $p_{\infty,k}(v) = p_k \circ \iota_{\infty} (v) = v_k$ is a unital positive linear map. Moreover, $\phi_{k} \circ p_{\infty, k+1}(v) = \phi_{k} ( v_{k+1} ) = p_{k}(v) = p_{\infty, k} \circ\iota_{\infty} (v) = p_{\infty, k} (v)$. Therefore, $(V_{\infty}, \{ p_{\infty, k}\} )$ is compatible with $(V_k, \phi_k)$ in \textbf{AOU}. 
\end{proof}

\begin{remark}
We summarize the relations above in the diagram below.

\begin{equation}
\xymatrix{
V_1	&\ar[l]_{\phi_1}	\dots	&\ar[l]_{\phi_{k-1}} V_k	&  \dots \ar[l]_{\phi_{k}}  \\
&	&	V_{\infty}	\ar[u]_{p_{\infty,k}}  \ar@{=>}[ul] 	\ar@/^0.5pc/[ull]^{p_{\infty,1}} \ar@/_0.5pc/@{=>}[ur] \ar[d]^{\iota_{\infty}}&	& 	\\
&	&	V	\ar@/^3.0pc/[lluu]^{p_1} \ar@/^4.0pc/@{=>}[luu]	\ar@/^2.0pc/[uu]^{p_k}	\ar@/_4.0pc/@{=>}[ruu] & 
}
\end{equation} 

We remark that, if $(V_k, \phi_k)$ are in \textbf{OU}, then the order norms used above will be order seminorms. The same construction will still go through; in particular, similar arguments in Lemma \ref{orderunit}	and Proposition \ref{AOUcompatible} will show that $(V_{\infty}, \{ p_{\infty, k }\} )$ is a compatible pair with $(V_k, \phi_k)$ in \textbf{OU}. However, what makes the construction rather complicated is the topological structures and continuity because $V_k$ and $V_{\infty}$ need not be Hausdorff. 
\end{remark}

\begin{thm}\label{AOUprojective_limit}
The pair $( (V_{\infty}, V_{\infty}^+, e), \{ p_{\infty, k} \}_{k\in\mathbb{N}})$ is the limit of the projective sequence $( (V_k, V_k^+, e_k), \phi_k )$ in \textbf{AOU}.
\end{thm}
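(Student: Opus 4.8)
The plan is to verify the universal property: given any compatible pair $(B, \{q_k\}_{k\in\mathbb{N}})$ in \textbf{AOU}, produce a unique unital positive map $u \colon B \to V_{\infty}$ with $p_{\infty,k} \circ u = q_k$ for all $k$. The starting point is Proposition \ref{uniquemorphism} applied to the ambient \textbf{TVS}-limit $(V, \{p_k\})$: since each $q_k \colon B \to V_k$ is unital positive, hence continuous in the order topology, and since the $q_k$ are compatible with $(V_k,\phi_k)$, that proposition yields a unique continuous linear map $u \colon B \to V$ with $p_k \circ u = q_k$; concretely $u(b) = (q_k(b))_k$. So the whole content is to check that $u$ actually lands in $V_{\infty}$, that it is unital and positive, and that uniqueness survives the passage from $V$ to $V_{\infty}$.

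First I would check $u(B) \subseteq V_{\infty}$. Fix $b \in B$ and write $b = \sum_i \lambda_i b_i$ with $b_i \in B_h$; for each $k$, by the Lemma after Proposition \ref{upContractive} (unital positive maps are contractive for the order seminorm) we get $\|q_k(b_i)\|^k_h \le \|b_i\|_h$, so $\|q_k(b)\|^k_M \le \sum_i |\lambda_i|\,\|q_k(b_i)\|^k_h \le \sum_i |\lambda_i|\,\|b_i\|_h$, a bound independent of $k$. Taking the infimum over representations gives $\|q_k(b)\|^k_M \le \|b\|_M$ for all $k$, hence $\|u(b)\|_{\max} \le \|b\|_M < \infty$, so $u(b) \in V_{\infty}$. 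Unitality is immediate: $u(e_B) = (q_k(e_B))_k = (e_k)_k = e$. Positivity: if $b \in B^+$ then $q_k(b) \in V_k^+$ for each $k$, so $u(b) = (q_k(b))_k \in \prod_k V_k^+ \cap V_{\infty} = V_{\infty}^+$. Thus $u \colon B \to V_{\infty}$ is a morphism in \textbf{AOU} and satisfies $p_{\infty,k} \circ u = (p_k \circ \iota_{\infty}) \circ u = p_k \circ u = q_k$.

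For uniqueness, suppose $u' \colon B \to V_{\infty}$ is another unital positive map with $p_{\infty,k} \circ u' = q_k$ for all $k$. Composing with the inclusion $\iota_{\infty}$ gives a continuous linear map $\iota_{\infty}\circ u' \colon B \to V$ with $p_k \circ (\iota_{\infty}\circ u') = q_k$; by the uniqueness clause of Proposition \ref{uniquemorphism} this forces $\iota_{\infty}\circ u' = \iota_{\infty}\circ u$, and since $\iota_{\infty}$ is injective, $u' = u$. One should note that $u'$ is automatically continuous here because unital positive maps between AOU spaces are continuous in the order topology (Proposition \ref{upContractive}), and the order topology on $V_{\infty}$ is the one induced by $\|\cdot\|_{\min}$ (Lemma \ref{order_seminorm}), which agrees with the subspace topology from $V$; this is the only place the careful topological bookkeeping of \S\ref{projectlimTVS} is actually invoked, and it is the main subtlety — everything else is routine once $u(B)\subseteq V_{\infty}$ is established. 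Together with Proposition \ref{AOUcompatible}, which already gives compatibility, this proves $(V_{\infty}, \{p_{\infty,k}\})$ is the projective limit in \textbf{AOU}.

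I expect the only genuine obstacle to be the verification $u(b) \in V_{\infty}$, i.e.\ the uniform-in-$k$ norm bound, which is exactly where the contractivity of unital positive maps for the maximal/order seminorm is needed; the rest is formal diagram-chasing plus the already-established \textbf{TVS} universal property.
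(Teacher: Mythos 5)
Your proof is correct and follows essentially the same route as the paper: invoke Proposition \ref{uniquemorphism} to get the unique continuous map into $V$, establish a uniform-in-$k$ norm bound from contractivity of unital positive maps to see the image lies in $V_{\infty}$, check unitality and positivity coordinatewise, and inherit uniqueness from the \textbf{TVS} level (the paper gets the bound from Proposition \ref{upContractive} via the minimal order norms, you via the maximal ones; this difference is immaterial since $\|\cdot\|_{\min}\le\|\cdot\|_{\max}$). One caveat: your parenthetical claim that the order-norm topology on $V_{\infty}$ agrees with the subspace topology inherited from $V$ is false in general --- the product topology is typically strictly coarser than the topology of $\|\cdot\|_{\min}$ (e.g.\ truncation maps $\mathbb{C}^{k+1}\to\mathbb{C}^k$ give $V_{\infty}\cong\ell^{\infty}$ with the sup norm versus pointwise convergence) --- but nothing in your argument hinges on it: $\iota_{\infty}\circ u'$ is continuous into $V$ simply because each $p_k\circ(\iota_{\infty}\circ u')=q_k$ is continuous (first part of Proposition \ref{uniquemorphism}), and in fact uniqueness needs no topology at all, since $p_k\circ\psi=q_k$ for all $k$ forces $\psi(b)=(q_k(b))_k$.
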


\begin{proof}
Suppose $( (W, W^+, e') , \{q_k\}_{k\in\mathbb{N}})$ in \textbf{AOU} is another compatible pair with $( (V_k, V_k^+, e_k), \phi_k)$. We will show that there exists unique unital positive map $\psi \colon W \to  V_{\infty}$ such that for each $k \in \mathbb{N}$, $p_{\infty, k} \circ \psi = q_k$.

Note that $(W, \{q_k\})$ is compatible with $(V_k, \phi_k)$ in \textbf{TVS}, so by Proposition \ref{uniquemorphism}, there exists unique continuous linear $\psi \colon W \to V$ such that $p_{k} \circ \psi = q_k$. We will show that $\psi$ is indeed a unital positive map from $W$ into $V_{\infty}$. By Proposition \ref{upContractive}, since $q_k$ is unital positive, for each $w \in W$, $|| q_k (w) ||^k_{m} \leq || w ||_{m}$ for each $k \in \mathbb{N}$; so $|| \psi(w) ||_{\min} < \infty$ and $\psi(w) \in V_{\infty}$. Also, $\psi(e') = ( q_k(e') ) = (e_k) = e$; and for $w \in W^+$, $\psi(w) = (q_k(w)) \in V_{\infty}^+$. 

The uniqueness of $\psi$ in \textbf{AOU} follows from its uniquenss in \textbf{TVS}. Consequently, $\psi \colon W \to V_{\infty}$ is the unique morphism in \textbf{AOU} that commutes in the diagram below.  
	\begin{equation}\label{AOUuniversal_diagram}
		\xymatrix{
				V_k 	& & V	\\
				W	\ar@/^1.75pc/@{-->}[urr]^{\psi} \ar[u]^{q_k}	\ar@{-->}[rr]_{\psi} & & V_{\infty} \ar[u]_{\iota_{\infty}} \ar[llu]_{p_{\infty, k}} 	
		}			
	\end{equation}
\end{proof}

We end this subsection by proving that $|| \cdot ||_{\min}$ and $|| \cdot ||_{\max}$ are indeed the minimal and maximal order norms, respectively, on $V_{\infty}$. 

\begin{prop}\label{AOUminOrderNorm}
On $(V_{\infty}, V_{\infty}^+, e)$, $|| \cdot ||_{\min}$ is the minimal order norm. 
\end{prop}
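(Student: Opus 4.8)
Recall that on any AOU space the minimal order norm is characterized as $\|v\|_m = \sup\{|f(v)| : f \in \mathfrak S(V)\}$, and by \cite[Proposition 4.9]{PT} it is the smallest among all order seminorms. So the proof has two parts: first, verify that $\|\cdot\|_{\min}$ is an order norm on $V_\infty$ (already done in Lemma \ref{order_seminorm}); second, show that it is dominated by every order norm on $V_\infty$, equivalently that $\|\cdot\|_{\min}$ agrees with $\sup\{|f(v)| : f \in \mathfrak S(V_\infty)\}$. I would establish the latter by bounding $\|v\|_{\min}$ from above by $\sup\{|f(v)| : f\in\mathfrak S(V_\infty)\}$, since the reverse inequality holds automatically (each $f \in \mathfrak S(V_\infty)$ is a state, hence contractive for the minimal order norm by Proposition \ref{upContractive}).

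\textbf{Key steps.} First, fix $v = (v_k) \in V_\infty$ and recall $\|v\|_{\min} = \sup_k \|v_k\|_m^k = \sup_k \sup\{|g(v_k)| : g \in \mathfrak S(V_k)\}$. Second, given $k$ and a state $g \in \mathfrak S(V_k)$, I want to produce a state $f \in \mathfrak S(V_\infty)$ with $f(v) = g(v_k)$, or at least $|f(v)|$ arbitrarily close. The natural candidate is $f := g \circ p_{\infty,k}$: since $p_{\infty,k} \colon V_\infty \to V_k$ is unital and positive (Proposition \ref{AOUcompatible}) and $g$ is a state, $f$ is a state on $V_\infty$, and $f(v) = g(p_{\infty,k}(v)) = g(v_k)$. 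Third, taking suprema: $\sup\{|f(v)| : f \in \mathfrak S(V_\infty)\} \geq \sup_k \sup\{|g(v_k)| : g \in \mathfrak S(V_k)\} = \|v\|_{\min}$. Combined with the reverse inequality $\sup\{|f(v)| : f \in \mathfrak S(V_\infty)\} \leq \|v\|_{\min}$ (which holds because every state on $V_\infty$ is $\|\cdot\|_{\min}$-contractive, and $\|\cdot\|_{\min}$ is an order norm on $V_\infty$ by Lemma \ref{order_seminorm}), this gives $\|v\|_{\min} = \sup\{|f(v)| : f \in \mathfrak S(V_\infty)\}$, which is the minimal order norm on $V_\infty$ by definition.

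\textbf{Main obstacle.} The potential subtlety is the reverse inequality $\sup\{|f(v)| : f \in \mathfrak S(V_\infty)\} \leq \|v\|_{\min}$: this requires knowing that the minimal order seminorm on $V_\infty$ computed intrinsically as $\sup$ over $\mathfrak S(V_\infty)$ coincides with, or is at least bounded by, $\|\cdot\|_{\min}$. Since Lemma \ref{order_seminorm} already establishes that $\|\cdot\|_{\min}$ is a (bona fide) order norm on the AOU space $(V_\infty, V_\infty^+, e)$, the bound $\|v\|_m^{V_\infty} \leq \|v\|_{\min}$ follows from \cite[Proposition 4.9]{PT} (the minimal order seminorm is the least order seminorm). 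Thus no genuine difficulty remains; the argument is essentially a bookkeeping exercise in composing states with the projections $p_{\infty,k}$. One should only take care that the supremum in $\|v\|_{\min}$ over $k$ is transported correctly, but since each individual state $g \circ p_{\infty,k}$ lands in $\mathfrak S(V_\infty)$, the supremum over the larger index set $\mathfrak S(V_\infty)$ dominates.
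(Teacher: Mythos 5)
Your proposal is correct and follows essentially the same route as the paper: one inequality comes from Lemma \ref{order_seminorm} together with the minimality of the minimal order norm, and the other from observing that $g \circ p_{\infty,k} \in \mathfrak{S}(V_\infty)$ for every $g \in \mathfrak{S}(V_k)$ and taking suprema. No gaps; the paper's proof is exactly this computation.
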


\begin{proof}
Denote $|| \cdot ||_m$ the minimal order norm on $V_{\infty}$. By Lemma \ref{order_seminorm}, $|| \cdot ||_{\min}$ is an order norm, so $|| \cdot ||_m \leq || \cdot ||_{\min}$. For the converse, note that for each $f_k \in \mathfrak{S}(V_k)$, $f_k \circ p_{\infty, k} \in \mathfrak{S}(V_{\infty})$. Hence for $v = (v_k) \in V_{\infty}$,
	\begin{align*}
		|| v ||_{\min} &= \sup_k || v_k ||^k_{\min} = \sup_k  \left\{	\sup \{ | f_k (v_k) | \colon f_k \in \mathfrak{S}(V_k)		\} \right\}	\\
							&=	\sup \left\{ | f_k \circ p_{\infty, k} (v) | \colon k \in \mathbb{N}, f_k \in \mathfrak{S}(V_k)		\right\}		\\
							&\leq \sup \{ |f (v) | \colon f \in \mathfrak{S}(V_{\infty})	\} = || v ||_{m}. 
	\end{align*}
Therefore, $|| v ||_{\min} = ||v||_m$. 
\end{proof}

\begin{prop}
On $(V_{\infty}, V_{\infty}^+, e)$, $|| \cdot ||_{\max}$ is the maximal order norm. 
\end{prop}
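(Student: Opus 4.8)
The plan is to prove the two inequalities $\|\cdot\|_{\max}\le\|\cdot\|_M$ and $\|\cdot\|_M\le\|\cdot\|_{\max}$ separately, writing $\|\cdot\|_M$ for the maximal order norm on $(V_{\infty},V_{\infty}^{+},e)$. The first is immediate and parallels the corresponding half of the proof of Proposition~\ref{AOUminOrderNorm}: by Lemma~\ref{order_seminorm}, $\|\cdot\|_{\max}$ is an order norm on $V_{\infty}$, and by \cite[Proposition~4.9]{PT} every order seminorm on an AOU space is dominated by the maximal order seminorm, so $\|v\|_{\max}\le\|v\|_M$ for all $v\in V_{\infty}$.

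The real content is the reverse inequality. Fix $v=(v_k)\in V_{\infty}$ and $\varepsilon>0$; it suffices to exhibit $v=\sum_{i=1}^{n}\lambda_i w^{(i)}$ with $w^{(i)}\in(V_{\infty})_h$ and $\sum_i|\lambda_i|\,\|w^{(i)}\|_h\le\|v\|_{\max}+\varepsilon$. Two remarks set this up. Since each $\phi_k$ is unital positive, it is contractive for the maximal order norms, so $\|v_k\|^k_M=\|\phi_k(v_{k+1})\|^k_M$ is nondecreasing in $k$ and $\|v\|_{\max}=\lim_k\|v_k\|^k_M$. Next, for each $k$ choose, from the definition of $\|\cdot\|^k_M$, a representation $v_k=\sum_{i=1}^{n_k}\lambda^{(k)}_i c^{(k)}_i$ with $c^{(k)}_i\in(V_k)_h$, $\|c^{(k)}_i\|^k_h=1$, and $\sum_i|\lambda^{(k)}_i|<\|v_k\|^k_M+\varepsilon\le\|v\|_{\max}+\varepsilon$. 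Applying the connecting maps $\phi_{k,l}$ for $l<k$ — which are unital positive, hence contractive for $\|\cdot\|_h$ — and using $\phi_{k,l}(v_k)=v_l$, one pushes this down to a representation $v_l=\sum_i\lambda^{(k)}_i\,\phi_{k,l}(c^{(k)}_i)$ of each $v_l$ with the same coefficients and with Hermitian pieces of order norm $\le1$. Now assemble a representation of $v$: for each fixed target level $l$ the coefficient vectors $(\lambda^{(k)}_i)_i$ and the pieces $\phi_{k,l}(c^{(k)}_i)$ lie in bounded sets, so passing to a subsequence of $k$'s and diagonalizing over $l$ produces scalars $\lambda_i$ with $\sum_i|\lambda_i|\le\|v\|_{\max}+\varepsilon$ and, for each $i$, a compatible sequence $w^{(i)}=(w^{(i)}_l)_l$ with $\phi_l(w^{(i)}_{l+1})=w^{(i)}_l$ (by continuity of $\phi_l$), $\|w^{(i)}_l\|^l_h\le1$, and $\sum_i\lambda_i w^{(i)}_l=v_l$ for all $l$. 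Then $w^{(i)}\in(V_{\infty})_h$ with $\|w^{(i)}\|_h\le1$ and $v=\sum_i\lambda_i w^{(i)}$, whence $\|v\|_M\le\sum_i|\lambda_i|\le\|v\|_{\max}+\varepsilon$; letting $\varepsilon\to0$ completes the argument.

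The one delicate point is the compactness required in the assembly step: one needs a uniform bound on the number of terms $n_k$ (so that ``the $i$-th piece'' is meaningful along a subsequence) and convergence of the pieces inside the original spaces rather than merely their biduals. When the $V_k$ are finite dimensional this is routine — Carath\'eodory's theorem gives $n_k\le 2\dim_{\mathbb C}V_k+1$, the order interval $[-e_l,e_l]$ in $(V_l)_h$ is compact, and the diagonal extraction goes through verbatim. For general $V_k$ one must do more, e.g.\ run the extraction in the weak-$*$ topologies of the biduals $V_l''$ and then argue the limiting pieces can be pulled back into $V_l$ (exploiting that $v$ itself lies in $V_{\infty}$), or pass to the dual and argue by weak-$*$ compactness of the functionals there. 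I expect this compactness bookkeeping to be the main obstacle; everything else is manipulation of the connecting maps. It is worth noting that one cannot simply dualize the proof of Proposition~\ref{AOUminOrderNorm}: composing functionals on $V_k$ with the $p_{\infty,k}$ only reproves $\|\cdot\|_{\max}\le\|\cdot\|_M$, and the naive decomposition $v=\Re v+i\,\Im v$ (or a rotation average over $e^{i\theta}\Re(e^{-i\theta}v)$) loses a factor of $2$, so a genuine approximation argument is unavoidable here.
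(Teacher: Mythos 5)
Your first half is fine and is exactly the paper's: by Lemma~\ref{order_seminorm} the norm $\|\cdot\|_{\max}$ is an order norm on $(V_\infty,V_\infty^+,e)$, so \cite[Proposition 4.9]{PT} gives $\|\cdot\|_{\max}\le\|\cdot\|_M$. The second half, however, is a program rather than a proof, and the gaps are exactly at the points you flag. (a) The Carath\'eodory bound $n_k\le 2\dim_{\mathbb C}V_k+1$ depends on $k$; unless the dimensions are uniformly bounded there is no uniform bound on the number of summands, so ``the $i$-th piece along a subsequence'' is not even defined, and no device for merging or padding terms is supplied. (b) For general $V_l$ the order interval $[-e_l,e_l]$ is not norm compact, and need not be weakly compact either (already in $C[0,1]$), so no topology is provided in which the diagonal extraction can be run inside $V_l$; the proposed detour through $V_l''$ leaves you with limit pieces in the bidual and no argument for pulling them back into $V_l$ while retaining Hermitianness, the bound $\|w^{(i)}_l\|^l_h\le 1$, compatibility $\phi_l(w^{(i)}_{l+1})=w^{(i)}_l$, and the exact identity $\sum_i\lambda_i w^{(i)}_l=v_l$. (c) Even granting convergence, closedness of the order interval and continuity of $\phi_l$ in the chosen topology are used but not justified. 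Since you concede these points yourself, the inequality $\|\cdot\|_M\le\|\cdot\|_{\max}$ is not established, and the proposal as it stands does not prove the proposition.

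For comparison, the paper's argument is much lighter and goes in the opposite direction to yours: instead of lifting near-optimal decompositions of the coordinates $v_k$ up to $V_\infty$, it takes an arbitrary representation $v=\sum_{i}\lambda_i v_i$ with $v_i\in(V_\infty)_h$ and restricts it coordinatewise, so that $v_k=\sum_i\lambda_i v_i^k$ and hence $\sum_i|\lambda_i|\,\|v_i\|_h\ge\sum_i|\lambda_i|\,\|v_i^k\|^k_h\ge\|v_k\|^k_M$ for every $k$, and then takes suprema and infima; no compactness enters at all. Note, though, that restriction by itself only bounds each $\|v_k\|^k_M$ above by the cost of a representation of $v$, i.e.\ after taking the infimum it yields $\|v\|_M\ge\|v\|_{\max}$, which is the same direction as the soft half; so your diagnosis that the remaining inequality $\|v\|_M\le\|v\|_{\max}$ is the one requiring the production of decompositions inside $V_\infty$ (or a dual, Hahn--Banach type argument), and your remark that $v=\Re v+i\,\Im v$ only gives a factor $2$, are pointing at the genuine substance of the statement rather than at a redundancy. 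But identifying where the work lies is not doing it: to turn your attempt into a proof you must either carry out the extraction rigorously (which seems to need additional hypotheses such as finite-dimensionality with bounded dimensions, or weak compactness of the order intervals) or replace it by a soft argument of the restriction/duality type.
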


\begin{proof}
Denote $|| \cdot ||_M$ the maximal order norm on $V_{\infty}$. By Lemma \ref{order_seminorm}, $|| \cdot ||_{\max}$ is an order norm, so $|| \cdot ||_{\max} \leq || \cdot ||_{M}$. For the converse, consider $v = (v_k) \in V_{\infty}$. Fix a representation $v = \sum_{i=1}^n \lambda_i v_i$, where $\lambda_i \in \mathbb{C}$ and $v_i = (v_i^k)_{k \in \mathbb{N}} \in (V_{\infty})_h$, so that
	\begin{equation*}
		\sum_{i=1}^n | \lambda_i| || v_i ||_h = \sum_{i=1}^n |\lambda_i| \sup_k || v_i^k ||^k_h	=	\sup_k \sum_{i=1}^n |\lambda_i| || v_i^k ||^k_h.
	\end{equation*}
For each $k \in \mathbb{N}$, this representation of $v$ implies that $v_k = \sum_{i=1}^n \lambda_i v_i^k$, with $v_i^k \in (V_k)_h$. In particular, by definition of maximal order norm, we have $\sum_{i=1}^n |\lambda_i| || v_i^k ||^k_h \geq ||v_k||_{\max}^k$. Hence, 
	\begin{equation*}
		\sum_{i=1}^n | \lambda_i| || v_i ||_h \geq \sup_k ||v_k||^k_{\max} = || v ||_M.
	\end{equation*}
Taking the infimum over all such representations yields that $|| v ||_{\max} \geq || v ||_M$. Therefore, $||v||_{\max} = ||v||_M$. 
\end{proof}

\subsection{Projective limits of OS}\label{projectlimOS}
We proceed to construct projective limit in \textbf{OS}. A linear map $\phi$ between operator systems $S$ and $T$ is unital completely positive if and only if for each $n \in \mathbb{N}$, its amplification $\phi^{(n)} = id_n \otimes \phi \colon M_n(S) \to M_n(T)$ is unital positive. In particular, $(M_n(S), M_n(S)^+, I_n \otimes e_S)$ is an AOU space and likewise for $M_n(T)$. Hence, a projective sequence $(S_k, \phi_k)$ in \textbf{OS} gives rise,  for each $n \in \mathbb{N}$, to a  projective sequence $( M_n(S_k), \phi_k^{(n)})_{k \in \mathbb{N}}$ in \textbf{AOU}.
When $n = 1$, by Theorem \ref{AOUprojective_limit}, we denote $(S_{\infty}, S_{\infty}^+, e)$ with morphisms $\{\phi_{k, \infty}\}_{k\in\mathbb{N}}$ the projective limit $\projectlim{AOU} (S_k, \phi_k)$.

The key step of the construction is to realize that there is a natural AOU structure on the vector space $M_n(S_{\infty})$ induced by $\{M_n(S_k)^+\}_{k\in\bb{N}}$, which yields a matrix ordering on $S_{\infty}$. To avoid confusion on reading, in this subsection we denote $\vec{x} = (x_k) \in S_{\infty}$ and $[x^k_{ij}]$ for an element in $M_n(S_k)$. 
There is a canonical vector space identification between $M_n(S_{\infty})$ and $\projectlim{AOU} M_n(S_k)$; namely, given $[\vec{x}_{ij}] \in M_n(S_{\infty})$, we identify it to $( [x^k_{ij}] )_{k\in\mathbb{N} }$, and vice-versa. We shall see that this identification endows $M_n(S_{\infty})$ the desired structure. 

Given $[\vec{x}_{ij}] \in M_n(S_{\infty})$, define $[\vec{x}_{ij}]^* := [ \vec{x}^*_{ji} ] = [ (x^k)^*_{ji} ]$, then $M_n(S_{\infty})$ is a $\ast$-vector space. 
We define a matrix ordering on $S_{\infty}$ by  
\begin{equation}\label{projectiveOScone}
M_n(S_{\infty})^+ := \{ [\vec{x}_{ij}] \in M_n(S_{\infty}) \colon [ x^k_{ij} ] \in M_n(S_k)^+, \forall k \in \mathbb{N} \}.
\end{equation}

Note that for each $[\vec{x}_{ij}] \in M_n(S_{\infty})^+$ and $\alpha \in M_{m,n}(\mathbb{C})$, $\alpha [\vec{x}_{ij}] \alpha^* = ( \alpha [x^k_{ij}] \alpha^* )_{k\in\mathbb{N}}$ is in $M_m(S_{\infty})^+$. Hence, this definition is well-defined by linearity and compatibilty of $\{\phi_{\infty, k}\}_{k\in\mathbb{N}}$. We shall show that it indeed defines a matrix ordering on $S_{\infty}$. 

\begin{prop}
The triple $(M_n(S_{\infty}), M_n(S_{\infty})^+, I_n \otimes e)$ is an AOU space. Moreover, the collection $\{M_n(S_{\infty})^+\}_{n=1}^{\infty}$ is a compatible family on $S_{\infty}$. Consequently, $(S_{\infty}, \{M_n(S_{\infty})^+\}_{n=1}^{\infty}, e)$ is an operator system.
\end{prop}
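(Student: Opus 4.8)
The plan is to recognize the candidate $(M_n(S_{\infty}), M_n(S_{\infty})^+, I_n\otimes e)$ as the projective limit, in \textbf{AOU}, of the sequence $(M_n(S_k), \phi_k^{(n)})_{k\in\mathbb{N}}$, and then harvest all three assertions from Theorem \ref{AOUprojective_limit}. First I would note that $(M_n(S_k), \phi_k^{(n)})$ is genuinely a projective sequence in \textbf{AOU}: each $M_n(S_k)$ is an AOU space with Archimedean order unit $I_n\otimes e_k$ since $S_k$ is an operator system, and each $\phi_k^{(n)}\colon M_n(S_{k+1})\to M_n(S_k)$ is unital ($\phi_k^{(n)}(I_n\otimes e_{k+1}) = I_n\otimes e_k$) and positive ($\phi_k$ being UCP). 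Theorem \ref{AOUprojective_limit} then produces a projective limit whose underlying set is $\{(y^k)_k\in\prod_k M_n(S_k)\colon \phi_k^{(n)}(y^{k+1})=y^k,\ \sup_k\|y^k\|_m^k<\infty\}$, whose cone is the intersection of that set with $\prod_k M_n(S_k)^+$, and whose order unit is $(I_n\otimes e_k)_k$. Under the canonical identification $[\vec x_{ij}]\leftrightarrow([x^k_{ij}])_k$ recorded before \eqref{projectiveOScone}, the compatibility relations $\phi_k(x^{k+1}_{ij})=x^k_{ij}$ (which hold precisely because each $\vec x_{ij}\in S_{\infty}\subset\projectlim{TVS}S_k$) coincide with $\phi_k^{(n)}([x^{k+1}_{ij}])=[x^k_{ij}]$, the cone \eqref{projectiveOScone} coincides with the intersection cone above, and $(I_n\otimes e_k)_k$ corresponds to $I_n\otimes e$. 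So, granting that the two underlying sets coincide, $(M_n(S_{\infty}), M_n(S_{\infty})^+, I_n\otimes e)$ is an AOU space by Theorem \ref{AOUprojective_limit}.

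The only non-routine point is that the finiteness condition $\sup_k\|[x^k_{ij}]\|_m^k<\infty$ defining the $n$-th level of that projective limit is the same as the condition ``$\vec x_{ij}\in S_{\infty}$ for every $i,j$'', i.e. $\sup_k\|x^k_{ij}\|_m^k<\infty$ for every $i,j$. Here I would use that on any AOU space all order norms lie within a factor of $2$ of one another, so these conditions are unaffected if the minimal order norms are replaced throughout by the operator norms coming from fixed concrete representations $S_k\subset B(\mathcal{H}_k)$ (which exist by the Choi--Effros theorem). In those representations, compression to a coordinate block gives $\|x^k_{ij}\|_{op}\le\|[x^k_{ij}]\|_{op}$, while the estimate in the proof of Lemma \ref{op-norm_estimate} gives $\|[x^k_{ij}]\|_{op}\le n\max_{i,j}\|x^k_{ij}\|_{op}$; both bounds are uniform in $k$, so $\sup_k\|[x^k_{ij}]\|_{op}<\infty$ holds if and only if $\sup_k\|x^k_{ij}\|_{op}<\infty$ for every $i,j$. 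I expect this uniformity of the comparison constants in $k$ to be the crux; everything else is bookkeeping.

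It then remains to check that $\{M_n(S_{\infty})^+\}_{n=1}^{\infty}$ is a matrix ordering on $S_{\infty}$. Each $M_n(S_{\infty})^+$ is a proper cone in $M_n(S_{\infty})_h$: this is immediate from properness of every $M_n(S_k)^+$, and also from $M_n(S_{\infty})$ being an AOU space. Its complex span is all of $M_n(S_{\infty})$ because $I_n\otimes e$ is an order unit --- a Hermitian $v$ satisfies $r(I_n\otimes e)\pm v\in M_n(S_{\infty})^+$ for some $r>0$, so $v$ is a difference of positive elements, and a general element is $\Re v+i\Im v$. Compatibility, $\alpha^* M_n(S_{\infty})^+\alpha\subset M_m(S_{\infty})^+$ for $\alpha\in M_{n,m}(\mathbb{C})$, is the observation recorded just before the statement: $\alpha^*[\vec x_{ij}]\alpha=(\alpha^*[x^k_{ij}]\alpha)_k$ has each $\alpha^*[x^k_{ij}]\alpha\in M_m(S_k)^+$ by compatibility of $\{M_m(S_k)^+\}$, and its entries, being finite $\mathbb{C}$-linear combinations of the $\vec x_{ij}\in S_{\infty}$, lie in the vector subspace $S_{\infty}$, so $\alpha^*[\vec x_{ij}]\alpha\in M_m(S_{\infty})^+$. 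Finally $e\in S_{\infty}^+$, and by the first part each $(M_n(S_{\infty}), M_n(S_{\infty})^+, I_n\otimes e)$ is an AOU space, i.e. $I_n\otimes e$ is an Archimedean order unit for every $n$; hence $e$ is an Archimedean matrix order unit and $(S_{\infty}, \{M_n(S_{\infty})^+\}_{n=1}^{\infty}, e)$ is an operator system by definition. Alternatively, one can obtain the matrix-order-unit property from Proposition \ref{MOUunit} and the Archimedean property at each level from Theorem \ref{AOUprojective_limit}.
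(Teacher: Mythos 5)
Your proof is correct, but it is organized differently from the paper's. The paper verifies the statement directly: it takes a Hermitian $[\vec x_{ij}]$ with entries in $S_{\infty}$, uses Lemma \ref{op-norm_estimate} to get the single uniform bound $\| [x^k_{ij}]\|_h \le n\max_{ij}\|\vec x_{ij}\|_{\max}$ valid for every $k$, concludes that $I_n\otimes e$ is an order unit for the cone \eqref{projectiveOScone}, and gets the Archimedean property immediately from levelwise Archimedeanity of each $I_n\otimes e_k$; compatibility of the cones is read off from the definition. You instead identify $M_n(S_{\infty})$ with $\projectlim{AOU}\,M_n(S_k)$ as constructed in \S\ref{projectlimAOU} and then quote Lemma \ref{orderunit} and Theorem \ref{AOUprojective_limit}, so the real content of your argument is the equality of the two underlying sets, i.e.\ the equivalence of $\sup_k\|[x^k_{ij}]\|^k_m<\infty$ with entrywise boundedness; your handling of this via compression, the estimate from Lemma \ref{op-norm_estimate}, and the $k$-independent factor-of-two comparison of order norms is sound (the operator norm from any concrete representation is an order norm, so the constants are indeed uniform in $k$). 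So both proofs hinge on the same estimate, Lemma \ref{op-norm_estimate}; the paper's direct check is shorter, while your route buys a genuinely useful extra fact: the set-level identification $M_n(S_{\infty})\cong\projectlim{AOU}M_n(S_k)$, which the paper asserts at the start of the proof of Theorem \ref{OS_projective_limit} without spelling out this verification. Your closing checks of properness, spanning via the order unit, and compatibility under $\alpha^*(\cdot)\alpha$ match the paper's (terser) treatment.
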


\begin{proof}
Note that $M_n(S_{\infty})^+$ is a proper cone in $(M_n(S_{\infty}))_h$ since each of $M_n(S_k)^+$ is a proper cone in $(M_n(S_k))_h$. It suffices to show that $I_n \otimes e$ is an Archimedean order unit. 

Given Hermitian $[\vec{x}_{ij}]$, take $r_{ij} = || \vec{x}_{ij} ||_{\max}$ and let $r = n \cdot \max_{ij} r_{ij}$. For each $k$, $[x^k_{ij}]$ is Hermitian in $M_n(S_k)$, so by Lemma \ref{op-norm_estimate}, $|| [x^k_{ij}] ||_h  = || [x^k_{ij}] ||_{op} \leq r$,
and $r I_n \otimes e_k - [x^k_{ij}] \in M_n(S_k)^+$. Hence, $r I_n \otimes e - [\vec{x}_{ij}] \in M_n(S_{\infty})^+$, and $I_n \otimes e$ is an order unit.
It is Archimedean for if $rI_n\otimes e + [\vec{x}_{ij}] \in M_n(S_{\infty})^+$ for each $r > 0$, then for each $k\in \mathbb{N}$, $r I_n \otimes e_k + [x^k_{ij}] \in M_n(S_k)^+$. Hence, it follows that $[x^k_{ij}] \in M_n(S_k)^+$ and $[\vec{x}_{ij}] \in M_n(S_{\infty})^+$. 

Therefore, we conclude that $(M_n(S_{\infty}), M_n(S_{\infty})^+, I_n\otimes e)$ is an AOU space for every $n \in \mathbb{N}$. Compatibility of the cones $\{M_n(S_{\infty})^+\}_{n=1}^{\infty}$ follows from the definition and compatibilty of $\{ M_n(S_k)^+ \}_{n=1}^{\infty}$ for each $k\in\mathbb{N}$. 
\end{proof}

Now the AOU projective limit $S_{\infty}$ has an operator system structure. We claim that it is also the projective limit of $(S_k, \phi_k)$ in  \textbf{OS} with the same maps $\phi_{\infty, k}$.

\begin{thm}\label{OS_projective_limit}
The pair $( (S_{\infty}, \{M_n(S_{\infty})^+\}_{n=1}^{\infty}, e ), \{ \phi_{\infty, k} \}_{k \in \mathbb{N}} )$ is a projective limit of $(S_k, \phi_k)$ in \textbf{OS}.	
\end{thm}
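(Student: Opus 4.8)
The plan is to reduce everything to the projective limit in \textbf{AOU} already established in Theorem \ref{AOUprojective_limit}, exploiting that complete positivity is tested one matrix level at a time and that the cones in \eqref{projectiveOScone} are defined coordinatewise.

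First I would check that each $\phi_{\infty,k}$ is a morphism in \textbf{OS}, i.e.\ unital completely positive. Unitality and $1$-positivity are inherited from the \textbf{AOU} construction. For complete positivity, fix $n$ and $[\vec x_{ij}]\in M_n(S_\infty)^+$; then $\phi_{\infty,k}^{(n)}([\vec x_{ij}])=[x^k_{ij}]$, which lies in $M_n(S_k)^+$ by the very definition \eqref{projectiveOScone}. The compatibility relation $\phi_k\circ\phi_{\infty,k+1}=\phi_{\infty,k}$ is literally the identity proved in Proposition \ref{AOUcompatible}, since the underlying maps are unchanged. Thus $\big((S_\infty,\{M_n(S_\infty)^+\},e),\{\phi_{\infty,k}\}\big)$ is a compatible pair in \textbf{OS}.

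Next I would verify the universal property. Let $\big((T,\{M_n(T)^+\},e'),\{q_k\}\big)$ be any compatible pair in \textbf{OS}. Forgetting the matrix structure, $(T,\{q_k\})$ is compatible with $(S_k,\phi_k)$ in \textbf{AOU}, so Theorem \ref{AOUprojective_limit} yields a unique unital positive map $\psi\colon T\to S_\infty$ with $\phi_{\infty,k}\circ\psi=q_k$ for every $k$ (in particular $\psi$ does land in $S_\infty$). It remains to upgrade $\psi$ to a completely positive map. Fix $n$ and $[t_{ij}]\in M_n(T)^+$; to see that $\psi^{(n)}([t_{ij}])=[\psi(t_{ij})]$ lies in $M_n(S_\infty)^+$, by \eqref{projectiveOScone} it suffices to check, for each $k$, that $[\phi_{\infty,k}(\psi(t_{ij}))]=[q_k(t_{ij})]=q_k^{(n)}([t_{ij}])$ lies in $M_n(S_k)^+$, and this holds because $q_k$ is completely positive. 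Hence $\psi\in UCP(T,S_\infty)$. Its uniqueness among unital completely positive maps with $\phi_{\infty,k}\circ\psi=q_k$ is immediate from its uniqueness already as a unital positive map.

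I do not anticipate a genuine obstacle: the substance is carried entirely by Theorem \ref{AOUprojective_limit} together with the coordinatewise description \eqref{projectiveOScone} of the matrix cones. The only point deserving a word of care is the implicit claim that the candidate $M_n(S_\infty)$ coincides, as an AOU space, with $\projectlim{AOU} M_n(S_k)$ — that is, that the finiteness condition $\|\vec x_{ij}\|_{\min}<\infty$ imposed on the entries is equivalent to finiteness of $\sup_k\|[x^k_{ij}]\|$ for the (mutually equivalent) order norms on the matrices. This follows from the entrywise comparison $\|x^k_{ij}\|\le\|[x^k_{ij}]\|$ in one direction and from Lemma \ref{op-norm_estimate} in the other; once it is noted, the argument above is simply the \textbf{AOU} universal property applied at each matrix level, with the amplifications $\psi^{(n)}=\mathrm{id}_n\otimes\psi$ automatically coherent.
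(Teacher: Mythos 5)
Your proposal is correct and follows essentially the same route as the paper: complete positivity of the $\phi_{\infty,k}$ and of the induced map $\psi$ is checked coordinatewise via the cone definition \eqref{projectiveOScone}, and existence and uniqueness of $\psi$ are pulled from the ground-level \textbf{AOU} projective limit of Theorem \ref{AOUprojective_limit}. Your closing observation identifying $M_n(S_\infty)$ with $\projectlim{AOU} M_n(S_k)$ (via Lemma \ref{op-norm_estimate}) is the same matrix-level claim the paper states at the start of its proof, though, as you note, it is not actually needed for the universal-property argument itself.
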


\begin{proof}
We claim that for each $n \in \mathbb{N}$, $(M_n(S_{\infty}), M_n(S_{\infty})^+, I_n \otimes e)$ with the maps $\phi_{\infty,k}^{(n)}$, is a projective limit of $(M_n(S_k), \phi_k^{(n)})_{k\in\mathbb{N}}$ in \textbf{AOU}. First we show that the map $\phi_{\infty,k}^{(n)}$ is positive. Indeed, if $[\vec{x}_{ij}] \in M_n(S_{\infty})^+$, then
	\begin{align*}
		\phi_{\infty,k}^{(n)} ([\vec{x}_{ij}]) &= [ \phi_{\infty,k} (\vec{x}_{ij}) ] = [x^k_{ij} ],
	\end{align*}
is in $M_n(S_k)^+$ by definition of $M_n(S_{\infty})^+$. In particular, we see that $\phi_{\infty,k}^{(n)}$ is unital. Moreover, 
	\begin{align*}
		\phi_k^{(n)} \circ \phi_{\infty, k+1}^{(n)} &= (id_n \otimes \phi_k) \circ (id_n \otimes \phi_{\infty,k+1} ) \\
			&= id_n \otimes ( \phi_k \circ \phi_{\infty, k+1} ) 
			= id_n \otimes \phi_{\infty, k} = \phi_{\infty, k}^{(n)},
	\end{align*}
so $(M_n(S_{\infty}), \{ \phi_{\infty, k}^{(n)} \}_{k\in\mathbb{N}} )$ is a compatible pair with $(M_n(S_k), \phi_k^{(n)})$ in \textbf{AOU}; consequently, $(S_{\infty}, \{ \phi_{\infty, k} \}_{k\in\mathbb{N}} )$ is compatible with $(S_k, \phi_k)$ in \textbf{OS}.

For the universal property, let $(T, \{ \psi_k \}_{k\in\mathbb{N}})$ be compatible with $(S_k, \phi_k)$ in \textbf{OS}.  
Define $\Psi \colon T \to S_{\infty}$ to be $\Psi (t) := ( \psi_k(t) )_{k \in \mathbb{N}}$. Then $\Psi$ is well-defined, unital, and positive just as in the proof of Theorem \ref{AOUprojective_limit}. Moreover, for each $n \in \mathbb{N}$ and every $Y \in M_n(T)^+$, $\Psi^{(n)}( Y )$ can be identified to $( \psi_k^{(n)} (Y) )_{k \in \mathbb{N}}$, where each $\psi_k^{(n)} (Y) \in M_n(S_k)^+$. Thus, $\Psi$ is completely positive, and it is evident that $\Psi$ is unital. 
The uniqueness of $\Psi$ follows from the universal property of $S_{\infty}$ in \textbf{AOU} at the ground level. 
\end{proof}

\begin{remark}\label{surjectivity}
If each $\phi_k$ is surjective, the map $\phi_{\infty, k}$ is also surjective for each $k$. Indeed, given $x_k \in S_k$, take $\vec{x} = (x_k) \in S_{\infty}$ such that $x_i = \phi_{k,i}(x_k)$ for $i \leq k$ and $x_i \in \phi_{i,k}^{-1}( \{x_k \} ) \neq \emptyset$ for $i > k$. Then $\phi_{\infty, k}(\vec{x}) = x_k$. 
\end{remark}

\begin{thm}\label{OSdiagram_commute}
Let $(S_k, \phi_k)$ and $(T_k, \psi_k)$ be two projective sequences in \textbf{OS} such that for each $k$, there is unital completely positive $\theta_k \colon S_k \to T_k$, where $\theta_k \circ \phi_k = \psi_k \circ \theta_{k+1}$ for each $k \in \mathbb{N}$. Suppose $(S_{\infty}, \{\phi_{\infty,k} \})$ and $(T_{\infty}, \{ \psi_{\infty, k} \})$ are the projective limits of $(S_k, \phi_k)$ and $(T_k, \psi_k)$, respectively. Then there exists unique morphism $\Theta \colon S_{\infty} \to T_{\infty}$ such that  $\theta_k \circ \phi_{\infty, k} = \psi_{\infty,k} \circ \Theta$ for every $k \in \mathbb{N}$. 
\end{thm}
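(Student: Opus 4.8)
The plan is to deduce the statement from the universal property of the projective limit in \textbf{OS} established in Theorem \ref{OS_projective_limit}. First I would form the maps $q_k := \theta_k \circ \phi_{\infty, k} \colon S_{\infty} \to T_k$ for each $k \in \mathbb{N}$; being a composition of unital completely positive maps, each $q_k$ is a morphism in \textbf{OS}.

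The one nontrivial point is to verify that $(S_{\infty}, \{q_k\}_{k\in\mathbb{N}})$ is a compatible pair with the projective sequence $(T_k, \psi_k)$, i.e. that $\psi_k \circ q_{k+1} = q_k$ for all $k$. This is a direct computation: $\psi_k \circ q_{k+1} = \psi_k \circ \theta_{k+1} \circ \phi_{\infty, k+1} = \theta_k \circ \phi_k \circ \phi_{\infty, k+1} = \theta_k \circ \phi_{\infty, k} = q_k$, where the second equality is the intertwining hypothesis $\psi_k \circ \theta_{k+1} = \theta_k \circ \phi_k$ and the third is the compatibility of $(S_{\infty}, \{\phi_{\infty, k}\})$ with $(S_k, \phi_k)$. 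The universal property of $(T_{\infty}, \{\psi_{\infty, k}\}) = \projectlim{OS}(T_k, \psi_k)$ then yields a unique morphism $\Theta \colon S_{\infty} \to T_{\infty}$ with $\psi_{\infty, k} \circ \Theta = q_k = \theta_k \circ \phi_{\infty, k}$ for every $k$, which is precisely the claim, uniqueness included.

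For a more hands-on description in the spirit of \S\ref{projectlimOS}, one can check that $\Theta$ is given by $\Theta(\vec{x}) = (\theta_k(x_k))_{k\in\mathbb{N}}$ for $\vec{x} = (x_k) \in S_{\infty}$: the entries satisfy $\psi_k(\theta_{k+1}(x_{k+1})) = \theta_k(\phi_k(x_{k+1})) = \theta_k(x_k)$, so the tuple lies in the product projective-limit space $T$; Proposition \ref{upContractive} gives $|| \theta_k(x_k) ||^k_m \le || x_k ||^k_m \le || \vec{x} ||_{\min}$, so $|| \Theta(\vec{x}) ||_{\min} < \infty$ and hence $\Theta(\vec{x}) \in T_{\infty}$; $\Theta$ is clearly unital; and at each matrix level $\Theta^{(n)}$ carries $([x^k_{ij}])_{k}$ to $(\theta_k^{(n)}([x^k_{ij}]))_{k}$, which preserves the cones defined in \eqref{projectiveOScone}, so $\Theta$ is completely positive.

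I do not expect a genuine obstacle: the theorem is just the naturality of the projective-limit construction, and the whole weight of the argument rests on the compatibility computation above, which is immediate from the intertwining relations $\theta_k \circ \phi_k = \psi_k \circ \theta_{k+1}$.
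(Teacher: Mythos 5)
Your proposal is correct and is essentially the paper's own argument: form $q_k = \theta_k \circ \phi_{\infty,k}$, check compatibility with $(T_k,\psi_k)$ via the intertwining relation $\theta_k \circ \phi_k = \psi_k \circ \theta_{k+1}$ and the compatibility of $(S_\infty,\{\phi_{\infty,k}\})$, and invoke the universal property of $T_\infty$ from Theorem \ref{OS_projective_limit}. The explicit coordinate description $\Theta(\vec{x}) = (\theta_k(x_k))_k$ is a harmless (and correct) extra that the paper does not include.
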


\begin{proof}
The pair $(S_{\infty}, \{ \theta_k \circ \phi_{\infty, k} \}_{k \in \mathbb{N}})$ is compatible with $(T_k, \psi_k)$ since $\psi_{k} \circ (\theta_{k+1} \circ \phi_{\infty, k+1} ) = (\theta_k \circ \phi_k) \circ \phi_{\infty, k+1} = \theta_k \circ \phi_{\infty, k}$. By the universal property of $T_{\infty}$, there exists unique morphism $\Theta \colon S_{\infty} \to T_{\infty}$ such that $\psi_{\infty, k} \circ \Theta = \theta_k \circ \phi_{\infty,k}$.
\end{proof}

\section{Duality with injective limits}\label{Section:Duality injective}
Let $(S_k, \phi_k)$ be an inductive sequence in \textbf{AOU} (resp. \textbf{OS}), where each $\phi_k$ is an (resp. complete) order embedding and each $S_k'$ is an AOU space (resp.  operator system) with some suitable choice of Archimedean (resp. matrix) order unit.  
Under these assumptions, we show that injective and projective limits of the corresponding sequences are in duality. 

\subsection{AOU Spaces}\label{dualityAOU}
Suppose $(S_k, \phi_k)$ is an inductive sequence in \textbf{AOU} such that each $\phi_k$ is an order embedding. Furthermore, suppose for each $k \in \mathbb{N}$, $(S_k', (S_k')^+, \delta_k)$ is an AOU space such that $\phi_k' \colon S_{k+1}' \to S_k'$ is unital. It follows that $\phi_k'$ is surjective unital positive. Hence, the inductive sequence $(S_k, \phi_k)$ in \textbf{AOU} induces a projective sequence of ordered dual spaces $(S_k', \phi_k')$ in \textbf{AOU}. 
By \S \ref{inductiveOUandAOU} or \cite[\S 3]{MT}, let $(S_{\infty}, \{ \phi_{k,\infty} \}_{k\in\mathbb{N}} )$ be the inductive limit of $(S_k, \phi_k)$. Let $(T_{\infty}, \{ \psi_{\infty,k} \}_{k\in\mathbb{N}} )$ be the projective limit of $(S_k', \phi_k')$ in \S \ref{projectlimAOU}.  By Remark \ref{surjectivity}, the map $\psi_{\infty, k} \colon T_{\infty} \to S_k'$ is surjective.

\begin{remark}
We caution the reader that we will unambiguously use notation $\phi_{k,l}$ for the connecting morphism from $S_k$ to $S_l$, for $k \leq l$, as defined in \S \ref{inductlim}. We also write $\phi_{k,l}'$ for the dual map of $\phi_{k,l}$, so that $\{\phi_{k,l}'\}$ are the connecting morphisms for the projective sequence $(S_k', \phi_k')$. We summarize these assumptions as follows. 

\begin{equation*}
\xymatrix{
	S_k	\ar[r]^{\phi_{k,l}} \ar[rd]_{\phi_{k,\infty}}	&		S_{l} \ar[d]^{\phi_{l,\infty}}		&		&  S_k'	&	S_{l}'	\ar[l]_{\phi_{k,l}'} \\
		&		S_{\infty} 	&	&	T_{\infty} \ar[u]^{\psi_{\infty,k}} \ar[ur]_{\psi_{\infty,l}}	& 
}
\end{equation*}
\end{remark}

\begin{defn}
Define a bilinear pairing $\inner{\cdot}{\cdot} \colon S_{\infty} \times T_{\infty} \to \mathbb{C}$ by
	\begin{equation}\label{duality}
		\inner{\phi_{k,\infty}(x_k)}{(f_m)} := \lim_{m\to\infty} f_m( \phi_{k,m} (x_k ) ).
	\end{equation}
\end{defn}

\begin{prop}\label{AOU:duality}
$S_{\infty}$ and $T_{\infty}$ are in duality induced by \eqref{duality}. 
\end{prop}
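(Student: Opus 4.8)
The plan is to verify that the bilinear pairing $\inner{\cdot}{\cdot}$ of \eqref{duality} is well-defined, bilinear, and non-degenerate on both sides; these three properties together say that $S_\infty$ and $T_\infty$ are in duality. First I would check well-definedness: if $\phi_{k,\infty}(x_k) = \phi_{l,\infty}(y_l)$, then by Remark \ref{inductiveAOU}(i) the difference $\phi_{k,m}(x_k) - \phi_{l,m}(y_l)$ goes to zero in order norm as $m \to \infty$ (after raising both indices above $k,l$); since each $f_m$ is a state, hence contractive in the minimal order norm by Proposition \ref{upContractive}, and since the compatibility $f_m = \phi_{m}'(f_{m+1}) = f_{m+1}\circ\phi_m$ forces the sequence $f_m(\phi_{k,m}(x_k))$ to be eventually constant-up-to-a-vanishing-error, the limit exists and does not depend on the representative. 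Concretely, $f_{m+1}(\phi_{k,m+1}(x_k)) = f_{m+1}(\phi_m(\phi_{k,m}(x_k))) = f_m(\phi_{k,m}(x_k))$, so in fact the sequence $m \mapsto f_m(\phi_{k,m}(x_k))$ is eventually constant (constant for $m \ge k$), which makes the limit trivially well-defined on $\ddot S_\infty$; passing to the Archimedeanization quotient $S_\infty$ then uses the null-space description of $N$ together with contractivity of states. Bilinearity in each variable is routine once well-definedness is in hand.

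The substantive part is non-degeneracy. For non-degeneracy in the first variable: suppose $\inner{\phi_{k,\infty}(x_k)}{(f_m)} = 0$ for all $(f_m) \in T_\infty$. Given $k$ and $f_k \in \mathfrak{S}(S_k)$, surjectivity of $\psi_{\infty,k}$ (noted just before the Definition, via Remark \ref{surjectivity}) lets me lift $f_k$ to some $(f_m) \in T_\infty$ with $f_m$-component $f_k$ at level $k$; then $0 = \inner{\phi_{k,\infty}(x_k)}{(f_m)} = f_k(x_k)$. Since this holds for every state $f_k$ on $S_k$, and states span $S_k'$ (by \cite[Theorem 5.2]{PT}), we get $\ell(x_k) = 0$ for all $\ell \in S_k'$, hence $x_k = 0$ in $S_k$ in the order norm, i.e. $\|x_k\|_m^k = 0$; but then $\phi_{k,\infty}(x_k) = 0$ in $S_\infty$ because the null space $N$ is exactly the set of elements whose representatives have vanishing order norm in the limit. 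For non-degeneracy in the second variable: suppose $(f_m) \in T_\infty$ pairs to zero with every element of $S_\infty$. For each fixed $k$ and each $x_k \in S_k$, the element $\phi_{k,\infty}(x_k)$ lies in $S_\infty$, and $\inner{\phi_{k,\infty}(x_k)}{(f_m)} = f_k(x_k)$ (the limit being eventually the constant $f_k(x_k)$ as computed above, using $f_k = f_m \circ \phi_{k,m}$). So $f_k(x_k) = 0$ for all $x_k \in S_k$, i.e. $f_k = 0$; as this holds for all $k$, $(f_m) = 0$ in $T_\infty$.

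The main obstacle I anticipate is bookkeeping around the Archimedeanization quotient on the inductive side: one must be careful that "$x_k = 0$ in the order norm of $S_k$" is genuinely what forces $\phi_{k,\infty}(x_k) = 0$ in $S_\infty$, and conversely that the limit in \eqref{duality} is insensitive to modifying $x_k$ by an element of the kernel of $\phi_{k,\infty}$ — both of which reduce to the description of $N$ in Remark \ref{inductiveAOU} combined with the contractivity of states (Proposition \ref{upContractive}). Everything else — existence of the limit, bilinearity — collapses once one observes that the compatibility $f_m = f_{m+1}\circ\phi_m$ makes the defining sequence eventually constant, so no genuine limiting argument is needed; the only analytic input is that states are norm-one in the minimal order norm, which handles the quotient. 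I would organize the write-up as: (1) the sequence in \eqref{duality} stabilizes and descends to $S_\infty$; (2) bilinearity; (3) left non-degeneracy via surjectivity of $\psi_{\infty,k}$ and spanning of states; (4) right non-degeneracy by evaluating the pairing on the dense family $\bigcup_k \phi_{k,\infty}(S_k)$.
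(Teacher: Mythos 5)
Your argument follows essentially the same route as the paper: the compatibility relation $f_m = f_{m+1}\circ\phi_m$ makes the defining sequence constant for $m\ge k$ (the paper's identity $f_m(\phi_{k,m}(x_k)) = f_k(x_k)$), well-definedness on the Archimedeanization quotient comes from the null-space description of $N$ plus a uniform bound on the functionals, non-degeneracy in $T_\infty$ is read off at each level $k$, and non-degeneracy in $S_\infty$ uses that every functional on $S_k$ occurs as a component of an element of $T_\infty$ (surjectivity of $\psi_{\infty,k}$, which the paper also uses, implicitly, at the same point). One assertion needs correcting, though: the components $f_m$ of an element of $T_\infty$ are \emph{not} states, and Proposition \ref{upContractive} does not apply to them; $T_\infty$ is the projective limit of the ordered duals $(S_k',\delta_k)$ in \textbf{AOU}, so its elements are arbitrary compatible sequences with $\sup_m\|f_m\|<\infty$. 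The uniform bound you need for the quotient step (and, implicitly, for your lifting argument in the left non-degeneracy) is exactly this membership condition, i.e.\ $|f_m(\,\cdot\,)|\le \|(f_m)\|_{\max}\,\|\cdot\|^m$ as in the paper's proof — replace the appeal to "states are contractive" by this and the argument goes through. A minor stylistic difference: in the left non-degeneracy you conclude $x_k=0$ in $S_k$ itself (which, under the standing hypotheses, amounts to injectivity of $\phi_{k,\infty}$), whereas the paper only shows $\|\phi_{k,l}(x_k)\|^l=0$ for all $l\ge k$ and hence $\phi_{k,\infty}(x_k)=0$; your stronger conclusion is valid given the surjectivity of $\psi_{\infty,k}$ and the fact that states determine the minimal order norm, but it is not needed.
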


\begin{proof}
We first prove that \eqref{duality} is a well-defined bilinear mapping. Let $\phi_{k,\infty}(x_k) \in S_{\infty}$ and $(f_m) \in T_{\infty}$. Let $m \geq k$ and consider
	\begin{align*}\tag{$\dagger$}
		f_m(\phi_{k,m}(x_k))  &= (f_m \circ \phi_{k,m})( \phi_{k,k}( x_k) ) = f_k( x_k ),
	\end{align*}
which shows the limit in \eqref{duality} is in fact a constant. 
To see that it is well-defined, let $\phi_{k,\infty}(x_k) = \phi_{l,\infty}(x_l) \in S_{\infty}$ for some $l \in \mathbb{N}$. By Remark \ref{inductiveAOU}, $|| \phi_{k,m}(x_k) - \phi_{l,m}(x_l) ||^m \to 0$ as $m \to \infty$. Note that $|| f_m || \leq || (f_m) ||_{\max}$ for all $m \in \mathbb{N}$, so
	\begin{equation*}
		|f_m(\phi_{k,m} (x_k) - \phi_{l,m}(x_l) )| \leq  || (f_m) ||_{\max} \cdot || \phi_{k,m} (x_k) - \phi_{l,m} (x_l) ||^m \to 0,
	\end{equation*}
as $m \to 0$, so \eqref{duality} is well-defined. It is easy to see that \eqref{duality} is bilinear. 

For duality, suppose $\inner{\phi_{k,\infty}(x_k)}{(f_m)} = 0$ for every $k \in \mathbb{N}$ and every $x_k \in S_k$. We claim that $(f_m) = 0 \in T_{\infty}$. For fixed $k \in \mathbb{N}$, ($\dagger$) shows that $0 = \inner{\phi_{k,\infty}(x_k)}{(f_m)} = f_k ( \phi_{k,k} (x_k) ) = f_k(x_k)$. By duality between $S_k$ and $S_k'$, we have $f_k = 0$; therefore, $(f_m) = 0 \in T_{\infty}$.

Now suppose $\inner{\phi_{k,\infty}(x_k)}{(f_m)} = 0$ for every $(f_m) \in T_{\infty}$. We must show that $\phi_{k,\infty}(x_k) = 0 \in S_{\infty}$ or $\ddot{\phi}_{k,\infty}(x_k) \in N$; or equivalently, by Remark \ref{inductiveAOU}, $\lim_{l \to \infty} || \phi_{k,l}(x_k) ||^l = 0$. For each $l \geq k$, $| f_l( \phi_{k,l} (x_k) ) | 	= | 	\inner{\phi_{l,\infty}(x_l)}{(f_m)}	| = 0$. By taking the supremum over all $f_l \in S_l'$, we conclude that $|| \phi_{k,l}(x_k) ||^l = 0$; hence, $\phi_{k,\infty}(x_k) = 0 \in S_{\infty}$. 
\end{proof}

\begin{prop}\label{AOUdualcones}
$S_{\infty}^+$ and $T_{\infty}^+$ are dual cones with respect to the dual pair $\inner{S_{\infty}}{T_{\infty}}$. More precisely, 
	\begin{enumerate}[label={\upshape(\roman*)}, align=left, widest=ii, leftmargin=*]
		\item $\phi_{k,\infty}(x_k) \in S_{\infty}^+$ if and only if $\inner{\phi_{k,\infty}(x_k)}{(f_m)} \geq 0$ for all $(f_m) \in T_{\infty}^+$.
		\item $(f_m) \in T_{\infty}^+$ if and only if $\inner{\phi_{k,\infty}(x_k)}{(f_m)} \geq 0$ for all $\phi_{k,\infty}(x_k) \in S_{\infty}^+$. 
	\end{enumerate}
\end{prop}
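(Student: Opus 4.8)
The statement really comprises three distinct assertions. The ``only if'' halves of (i) and (ii) coincide: they say that $\langle v,g\rangle\ge 0$ whenever $v\in S_\infty^+$ and $g\in T_\infty^+$, and I would derive this straight from the description of $S_\infty^+$ in Remark \ref{inductiveAOU}(ii). The ``if'' of (ii) is a one-line computation. The ``if'' of (i) is the substantive part and requires a separation argument together with a lifting property for the dual connecting maps, which is where the order-embedding hypothesis is used. Two elementary facts, both already implicit in the proof of Proposition \ref{AOU:duality}, will be used freely: for $m\ge k$, $f_m(\phi_{k,m}(x_k))=(f_m\circ\phi_{k,m})(x_k)=f_k(x_k)$, so $\langle\phi_{k,\infty}(x_k),(f_m)\rangle=f_k(x_k)$; and for a coherent sequence of positive functionals $(f_m)$ the number $f_m(e_m)$ is independent of $m$, since $\phi_m$ is unital.

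For the non-negative pairing, let $v=\phi_{k,\infty}(x_k)\in S_\infty^+$ and $(f_m)\in T_\infty^+$. Given $r>0$, Remark \ref{inductiveAOU}(ii) furnishes $m\ge l>k$ and $y_l\in S_l$ with $\ddot{\phi}_{l,\infty}(y_l)\in N$ and $re_m+\phi_{k,m}(x_k)+\phi_{l,m}(y_l)\in S_m^+$. Applying the positive functional $f_m$ gives $rc+f_k(x_k)+f_l(y_l)\ge 0$ with $c:=f_m(e_m)\ge 0$. Since $\ddot{\phi}_{l,\infty}(y_l)\in N$ means $\phi_{l,\infty}(y_l)=0$ in $S_\infty$, well-definedness of the pairing (Proposition \ref{AOU:duality}) forces $f_l(y_l)=\langle\phi_{l,\infty}(y_l),(f_m)\rangle=0$. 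Hence $rc+f_k(x_k)\ge 0$ for all $r>0$, and letting $r\to 0^+$ gives $\langle v,(f_m)\rangle=f_k(x_k)\ge 0$. This proves the ``only if'' of (i) and, word for word, the ``only if'' of (ii). For the ``if'' of (ii): if $(f_m)\in T_\infty$ pairs non-negatively with all of $S_\infty^+$, then for each $m$ and each $w_m\in S_m^+$ we have $\phi_{m,\infty}(w_m)\in S_\infty^+$, so $0\le\langle\phi_{m,\infty}(w_m),(f_p)\rangle=f_m(w_m)$; thus every $f_m\in(S_m')^+$, i.e. $(f_m)\in T_\infty^+$.

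For the ``if'' of (i), suppose $\langle v,(g_m)\rangle\ge 0$ for all $(g_m)\in T_\infty^+$, where $v=\phi_{k,\infty}(x_k)$, and argue by contradiction assuming $v\notin S_\infty^+$. Positivity of $\phi_{k,\infty}$ forces $x_k\notin S_k^+$ (and $x_k\in(S_k)_h$, since $g_k(x_k)\ge 0$ in $\mathbb{R}$ for all positive $g_k$ on $S_k$ rules out a non-Hermitian $x_k$). As $S_k$ is an AOU space, $S_k^+$ is closed in $(S_k)_h$ in the order-norm topology, so by Hahn--Banach separation there is a continuous $\mathbb{R}$-linear functional on $(S_k)_h$ that is non-negative on $S_k^+$ and strictly negative at $x_k$; extending $\mathbb{C}$-linearly and normalising at $e_k$ (a nonzero positive functional on an AOU space is a positive multiple of a state, \cite{PT}) produces a state $g_k\in\mathfrak{S}(S_k)\subseteq(S_k')^+$ with $g_k(x_k)<0$. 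It then suffices to lift $g_k$ to some $(g_m)\in T_\infty^+$ whose $k$-th coordinate is $g_k$: for then $\langle v,(g_m)\rangle=g_k(x_k)<0$, contradicting the hypothesis, so $v\in S_\infty^+$.

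The lift is the heart of the matter, and I expect it to be the main obstacle. The coordinates of index $\le k$ cause no trouble: set $g_m:=\phi_{k,m}'(g_k)=g_k\circ\phi_{k,m}$, which is positive and, by Proposition \ref{upContractive}, satisfies $||g_m||^m_{\min}\le||g_k||^k_{\min}$. The difficulty is to extend $g_k$ upward to positive functionals $g_{k+1},g_{k+2},\dots$ with $\phi_m'(g_{m+1})=g_m$ and with $\sup_m||g_m||^m_{\min}<\infty$, so that the resulting coherent sequence really lies in $T_\infty$; a coherent sequence of states need not belong to $T_\infty$ without such a bound. I would obtain this from the order-embedding hypothesis as follows: since each $\phi_m$ is a unital order embedding, $\phi_m(S_m)$ is a unital subspace of $S_{m+1}$ and $\phi_m'$ is an ``order quotient map'', so a positive $g_m$ on $S_m$ with $g_m\le r\delta_m$ extends to a positive $g_{m+1}$ on $S_{m+1}$ with $g_{m+1}\le r\delta_{m+1}$ — equivalently, $\psi_{\infty,k}$ restricts to a surjection $T_\infty^+\to(S_k')^+$, strengthening Remark \ref{surjectivity}. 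Granting this sandwiched extension (itself a consequence of the order-theoretic Hahn--Banach machinery of \cite{PT} applied to the order interval $[0,r\delta_{m+1}]$), one gets $||g_m||^m_{\min}\le||g_k||^k_{\min}$ for every $m\ge k$, hence $(g_m)\in T_\infty^+$, completing the argument. Everything outside this bounded positive lifting is routine.
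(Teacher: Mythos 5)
The parts you actually prove coincide with the paper's argument. Your non-negativity of the pairing on $S_\infty^+\times T_\infty^+$ is the paper's first paragraph, except that you kill the error term by observing $f_l(y_l)=\inner{\phi_{l,\infty}(y_l)}{(f_m)}=0$ via well-definedness, where the paper instead uses the estimate $|f_p(\phi_{l,p}(y_l))|\le \|(f_m)\|_{\max}\,\|\phi_{l,p}(y_l)\|^p\to 0$; these are equivalent. Your ``if'' of (ii) is identical to the paper's (apply the hypothesis to $\phi_{m,\infty}(w_m)$ for $w_m\in S_m^+$ and use $(\dagger)$).

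The gap is in the ``if'' half of (i), which you yourself single out as the substantive part. Both you and the paper need that every $f_k\in(S_k')^+$ arises as the $k$-th coordinate of some element of $T_\infty^+$; the paper settles this step by invoking Remark \ref{surjectivity} (surjectivity of $\psi_{\infty,k}$) together with the duality between $S_k$ and $S_k'$, whereas you rightly insist that the lift must be positive \emph{and} uniformly dominated, $g_m\le r\delta_m$ for all $m$, and then simply assert a ``sandwiched'' extension through each $\phi_m$ as ``a consequence of the order-theoretic Hahn--Banach machinery of \cite{PT}''. That assertion is the entire content of the step and is not proved: no such order-interval lifting theorem appears in \cite{PT}, and it is not a formal consequence of Hahn--Banach/Krein extension. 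What Krein's theorem gives (using that $\phi_m$ is a unital order embedding, so $\phi_m(S_m)$ is cofinal in $S_{m+1}$) is a positive coherent extension with $g_m(e_m)$ constant; this controls the dual Banach norm of $g_m$ but not the quantity $\inf\{r>0\colon g_m\le r\delta_m\}$, which is exactly the order norm $\|g_m\|^m_{\min}$ whose uniform boundedness is required for membership in $T_\infty$. Moreover, an order-interval--preserving extension is false for a general positive dominating functional: take $V=C[0,1]$, $M$ the unital subspace of affine functions, and $\mu$ Lebesgue measure; then $[0,\mu|_M]$ contains functionals extending to no element of $[0,\mu]$. So any correct argument must use the standing hypotheses that $\delta_{m+1}$ is an order unit for $S_{m+1}'$ with $\phi_m'(\delta_{m+1})=\delta_m$ in an essential way, and you have not supplied such an argument. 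Until that lemma (equivalently, $\psi_{\infty,k}(T_\infty^+)=(S_k')^+$) is established, your separation-and-lifting proof of (i) is incomplete; everything else in your proposal, including the Hahn--Banach separation of $x_k$ from $S_k^+$ and the downward coordinates (note the harmless index slip: for $m\le k$ one should take $g_m=g_k\circ\phi_{m,k}$), reduces precisely to this missing point.
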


\begin{proof}
For (i), let $\phi_{k,\infty}(x_k) \in S_{\infty}^+$. By Remark \ref{inductiveAOU}, for each $r > 0$, there exist $m \geq l > k$ and $y_l \in S_l$ with $|| \phi_{l,m}(y_l) ||^m \to 0$, such that $re_m + \phi_{k,m}(x_k) + \phi_{l,m}(y_l) \in S_m^+$. If $(f_m) \in T_{\infty}^+$, then $f_m \in (S_m^d)^+$ and 
	\begin{equation*}
		r f_m(e_m) + f_m( \phi_{k,m}(x_k) ) + f_m( \phi_{l,m} (y_l) ) \geq 0.
	\end{equation*}
Since $|| \phi_{l,m}(y_l)||^m \to 0$ and $|| f_m || \leq || (f_m) ||_{\max}$, letting $m \to \infty$ asserts that $r || (f_m) ||_{\max} + \inner{\phi_{k,\infty}(x_k) }{(f_m)} \geq 0$ for each $r \geq 0$. Consequently, $\inner{\phi_{k,\infty} (x_k) }{(f_m)} \geq 0$. 

Conversely, suppose $\inner{\phi_{k,\infty}(x_k)}{(f_m)} \geq 0$ for all $(f_m) \in T_{\infty}^+$. Then by ($\dagger$), it follows that $f_k(x_k) \geq 0$. Since $\psi_{\infty, k}$ is surjective, letting $f_k$ vary over $(S_k')^+$ asserts that $x_k \in S_k^+$ via duality between $S_k$ and $S_k'$; therefore, $\phi_{k,\infty}(x_k) \in S_{\infty}^+$. 

For (ii), one direction follows just as the first paragraph. Suppose $(f_m) \in T_{\infty}$ and $\inner{\phi_{k,\infty}(x_k)}{(f_m)} \geq 0$ for all $\phi_{k,\infty}(x_k) \in S_{\infty}^+$. Again by ($\dagger$), for each $k \in \mathbb{N}$, $f_k(x_k) = \inner{\phi_{k,\infty}(x_k)}{(f_m)} \geq 0$. Now let $x_k$ vary over $S_k^+$, and by duality between $S_k$ and $S_k'$, we deduce that $f_k \in (S_k')^+$; therefore, $(f_m) \in T_{\infty}^+$. 
\end{proof}

Combining these two propositions, we conclude the following duality theorem between $S_{\infty}$ and $T_{\infty}$. We write $S_{\infty}'$ for the ordered dual of $S_{\infty}$. 

\begin{thm}\label{AOUoi}
The dual pairing in \eqref{duality} induces an order isomorphism $\Gamma \colon T_{\infty} \to S_{\infty}' $ via $\Gamma \colon (f_k) \mapsto \inner{\cdot}{(f_k)}$. In particular, $(S_{\infty}', (S_{\infty}')^+) $ with $\Gamma( (\delta_k) )$ is an AOU space. 
\end{thm}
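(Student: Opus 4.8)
The plan is to show that $\Gamma$ is a linear bijection of $T_{\infty}$ onto $S_{\infty}'$ that carries $T_{\infty}^+$ onto $(S_{\infty}')^+$ and $(\delta_k)$ onto $\Gamma((\delta_k))$, and then to transport the AOU structure of $T_{\infty}$ — which is an AOU space by Theorem \ref{AOUprojective_limit} — across this order isomorphism. First I would confirm that $\Gamma$ is well defined, i.e.\ that $\Gamma((f_m)) = \inner{\cdot}{(f_m)}$ actually lies in $S_{\infty}'$. Linearity is immediate from bilinearity of \eqref{duality}. For continuity in the order topology, the same estimate appearing in the proof of Proposition \ref{AOU:duality} — namely $|f_m(\phi_{k,m}(x_k))| \le \|f_m\|\cdot\|\phi_{k,m}(x_k)\|^m \le \|(f_m)\|_{\max}\cdot\|\phi_{k,m}(x_k)\|^m$ — passes to the limit as $m \to \infty$, giving $|\inner{\phi_{k,\infty}(x_k)}{(f_m)}| \le \|(f_m)\|_{\max}\cdot\|\phi_{k,\infty}(x_k)\|^{\infty}$, where $\|\cdot\|^{\infty}$ is the order norm on $S_{\infty}$ described in Remark \ref{inductiveAOU}; hence $\Gamma((f_m))$ is bounded, so continuous. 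Injectivity of $\Gamma$ is precisely the second assertion of Proposition \ref{AOU:duality}.

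The crux is surjectivity. Given $g \in S_{\infty}'$, I would set $f_k := g \circ \phi_{k,\infty} \in S_k'$. The identity $\phi_{k,\infty} = \phi_{k+1,\infty}\circ\phi_k$ gives $\phi_k'(f_{k+1}) = f_{k+1}\circ\phi_k = g\circ\phi_{k+1,\infty}\circ\phi_k = g\circ\phi_{k,\infty} = f_k$, so $(f_k)$ lies in the projective limit $V$ underlying $T_{\infty}$. To see $(f_k) \in T_{\infty}$ one must verify the finiteness condition $\|(f_k)\|_{\min} < \infty$: since $g$ is continuous there is a constant $C$ with $|g(x)| \le C\|x\|^{\infty}_m$ for the minimal order norm of $S_{\infty}$, and since each $\phi_{k,\infty}$ is unital positive it is contractive for the minimal order norms by Proposition \ref{upContractive}, whence $|f_k(x_k)| = |g(\phi_{k,\infty}(x_k))| \le C\|x_k\|^k_m$ and therefore $\sup_k\|f_k\|^k_m \le C$. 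Finally $\Gamma((f_k)) = g$ because, by the computation ($\dagger$) in the proof of Proposition \ref{AOU:duality}, both sides agree on each $\phi_{k,\infty}(x_k)$, and $S_{\infty} = \bigcup_k\phi_{k,\infty}(S_k)$.

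For the order structure, note that for $(f_m) \in T_{\infty}$ one has $\Gamma((f_m)) \in (S_{\infty}')^+$ if and only if $\inner{x}{(f_m)} \ge 0$ for every $x \in S_{\infty}^+$, which by Proposition \ref{AOUdualcones}(ii) is exactly the condition $(f_m) \in T_{\infty}^+$; together with surjectivity this yields $\Gamma(T_{\infty}^+) = (S_{\infty}')^+$, so $\Gamma$ is an order isomorphism. Since each $\phi_k'$ is unital, $(\delta_k)$ belongs to $T_{\infty}$ (indeed $\|\delta_k\|^k_m = 1$ for every $k$) and is the order unit of $\projectlim{AOU}(S_k',\phi_k')$ furnished by \S \ref{projectlimAOU}; transporting the AOU structure of $(T_{\infty}, T_{\infty}^+, (\delta_k))$ through the order isomorphism $\Gamma$ then shows that $(S_{\infty}', (S_{\infty}')^+)$ with order unit $\Gamma((\delta_k))$ is an AOU space. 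I expect the only genuinely delicate point to be the surjectivity step — specifically, checking that the candidate preimage $(g\circ\phi_{k,\infty})_k$ meets the finiteness condition defining $T_{\infty}$ — and this reduces cleanly to the contractivity of the canonical maps $\phi_{k,\infty}$ provided by Proposition \ref{upContractive}.
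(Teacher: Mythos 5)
Your overall route is the same as the paper's: Theorem \ref{AOUoi} is stated there as a direct combination of Proposition \ref{AOU:duality} (nondegeneracy, hence injectivity of $\Gamma$) and Proposition \ref{AOUdualcones} (identification of the cones), followed by transporting the unit $(\delta_k)$. Your treatment of well-definedness, injectivity, the cone identification via Proposition \ref{AOUdualcones}(ii), and the transport of the AOU structure all match this. Where you go beyond the paper is in writing out an explicit surjectivity argument (the paper leaves this step implicit), and that is exactly where the proposal breaks down.

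The gap is in the step ``$|f_k(x_k)|\le C\|x_k\|^k_m$, therefore $\sup_k\|f_k\|^k_m\le C$''. Membership of $(f_k)=(g\circ\phi_{k,\infty})_k$ in $T_\infty$ requires finiteness of $\sup_k\|f_k\|^k_m$, where $\|f_k\|^k_m$ is the minimal order norm of the AOU space $(S_k',(S_k')^+,\delta_k)$; for Hermitian $f_k$ this equals $\inf\{r>0:\ -r\delta_k\le f_k\le r\delta_k\}$, a domination condition relative to the chosen unit $\delta_k$. Your estimate only bounds the Banach dual norm of $f_k$ as a functional on $S_k$, and the two norms are not uniformly comparable: the inequality that does hold is (dual norm) $\le \delta_k(e_k)\cdot$(order norm), the reverse of what you need, and the ratio (order norm)/(dual norm) can be as large as the dimension --- e.g.\ on $M_n'$ with unit the normalized trace, a vector state has dual norm $1$ but order norm $n$. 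So contractivity of the maps $\phi_{k,\infty}$ (Proposition \ref{upContractive}) cannot deliver the finiteness condition. Nor is this a repairable technicality: in the intended application, Proposition \ref{DualCountableDim}, where the $\phi_k$ are inclusions and $\delta_k=\delta|_{S_k}$, the finiteness condition for $(g|_{S_k})_k$ with $g\ge 0$ says precisely that $g\le r\delta$ for a uniform $r$, i.e.\ it encodes the very order-unit property of $\delta$ that the duality theorem is being used to establish. Any correct surjectivity argument must therefore make essential use of the hypothesis that each $\delta_k$ is an Archimedean order unit for $S_k'$ compatibly along the sequence, rather than reducing to positivity and contractivity of the canonical maps.
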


\subsection{Operator Systems}\label{dualityOS}

We now proceed to the case for \textbf{OS}. Let $(S_k, \phi_k)$ be an inductive sequence in \textbf{OS}, where each $\phi_k$ is a unital complete order embedding. Therefore, the inductive sequence $(S_k, \phi_k)$ in \textbf{OS} induces a projective sequence $(S_k', \phi_k')$, where $\phi_k'$ is surjective, in \textbf{OS}. 
Let $(S_{\infty}, \{ \phi_{k,\infty} \}_{k\in\mathbb{N}} )$ be the inductive limit of $(S_k, \phi_k)$ and $(T_{\infty}, \{ \psi_{\infty,k} \}_{k\in\mathbb{N}} )$ be the projective limit of $(S_k', \phi_k')$.

\begin{thm}\label{OScoi}
The duality defined in \eqref{duality} induces a complete order isomorphism $\Gamma \colon T_{\infty} \to S_{\infty}' $ via $\Gamma \colon (f_k) \mapsto \inner{\cdot}{(f_k)}$. Consequently, the matrix-ordered dual $(S_{\infty}', \{ M_n( S_{\infty}' )^+ \}_{n=1}^{\infty} )$, equipped with $\Gamma( (\delta_k) )$, is an operator system.  
\end{thm}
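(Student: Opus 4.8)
The plan is to upgrade Theorem \ref{AOUoi} from an order isomorphism to a complete order isomorphism by checking the statement at every matrix level $n$, exploiting the fact established in \S \ref{projectlimOS} that $M_n(S_\infty) \cong \projectlim{AOU} M_n(S_k)$ and (via Theorem \ref{OS_projective_limit}) that $M_n(T_\infty) \cong \projectlim{AOU} M_n(S_k')$, together with the matrix-ordered dual identification $M_n(S_k')^+ \cong CP(S_k, M_n)$ of \eqref{DualMatrixOrder}. First I would verify that $\Gamma$ is a linear isomorphism: this is immediate from Theorem \ref{AOUoi}, since the underlying AOU spaces of $T_\infty$ and $S_\infty'$ are identified by $\Gamma$, and $\Gamma((\delta_k))$ is the chosen Archimedean order unit. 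The real content is that $\Gamma$ and $\Gamma^{-1}$ are completely positive, equivalently that for each $n$ the amplification $\Gamma^{(n)} \colon M_n(T_\infty) \to M_n(S_\infty')$ is an order isomorphism of AOU spaces.

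Next I would set up the matrix-level duality. Fix $n$. The inductive sequence $(M_n(S_k), \phi_k^{(n)})$ lives in \textbf{AOU}, each $\phi_k^{(n)}$ is an order embedding (since $\phi_k$ is a complete order embedding), and its inductive limit is $M_n(S_\infty)$ by the remark following Theorem \ref{OS_projective_limit}. Dually, $(M_n(S_k'), (\phi_k')^{(n)})$ is a projective sequence in \textbf{AOU} whose limit, by Theorem \ref{OS_projective_limit} applied to $(S_k', \phi_k')$, is $M_n(T_\infty)$ with the cone $M_n(T_\infty)^+$ from \eqref{projectiveOScone}; moreover $M_n(S_k')$ with order unit $I_n \otimes \delta_k$ is an AOU space because $S_k'$ is an operator system. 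I would then observe that the matrix-ordered dual pairing between $M_n(S_\infty)$ and $M_n(T_\infty)$ coming from \eqref{duality} via $f([\vec{x}_{ij}]) := \sum_{ij} \inner{\vec{x}_{ij}}{(f_{ij})_m}$ is precisely the pairing of Definition 4.4 applied to the inductive sequence $(M_n(S_k), \phi_k^{(n)})$ and its projective dual sequence — the limits and sums commute because the limit in \eqref{duality} is eventually constant by $(\dagger)$. Thus Proposition \ref{AOUdualcones}, applied verbatim at matrix level $n$, shows that $M_n(S_\infty)^+$ and $M_n(T_\infty)^+$ are mutually dual cones, which gives that $\Gamma^{(n)}$ carries $M_n(T_\infty)^+$ onto the cone of positive functionals on $M_n(S_\infty)$. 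By the identification $M_n(S_\infty')^+ \cong CP(S_\infty, M_n)$ of \eqref{DualMatrixOrder}, a functional on $M_n(S_\infty)$ is positive exactly when the corresponding map $S_\infty \to M_n$ is completely positive, i.e.\ exactly when the associated element of $M_n(S_\infty')$ lies in $M_n(S_\infty')^+$; hence $\Gamma^{(n)}$ is an order isomorphism of AOU spaces for every $n$, and therefore $\Gamma$ is a complete order isomorphism.

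For the final sentence, once $\Gamma$ is a complete order isomorphism, the matrix ordering on $S_\infty'$ transported from $T_\infty$ coincides with the canonical matrix-ordered dual structure $\{M_n(S_\infty')^+\}$; since $T_\infty$ is an operator system (Theorem \ref{OS_projective_limit}) with Archimedean matrix order unit $(\delta_k)$, its image $\Gamma((\delta_k))$ is an Archimedean matrix order unit for $S_\infty'$, so $(S_\infty', \{M_n(S_\infty')^+\}_{n=1}^\infty)$ with this unit is an operator system. I expect the main obstacle to be the bookkeeping in the previous paragraph: namely checking carefully that the pairing \eqref{duality}, when amplified to $n\times n$ matrices, really does coincide with the pairing attached to the inductive sequence $(M_n(S_k), \phi_k^{(n)})$ together with its dual, so that Propositions \ref{AOU:duality} and \ref{AOUdualcones} can be invoked without re-proof — one must confirm that forming $n\times n$ matrices commutes with passing to inductive and projective limits (already noted in the excerpt) and with forming ordered/matrix-ordered duals (the content of \eqref{DualMatrixOrder}), and that the eventual-constancy of the limit in \eqref{duality} survives the amplification so the pairing is still well defined and separating at level $n$.
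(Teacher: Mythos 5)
Your proposal is correct and takes essentially the same route as the paper: both reduce the complete order isomorphism claim to matrix-level AOU duality, using $M_n(S_{\infty}) \cong \inductlim{AOU} M_n(S_k)$, the matrix-level projective limit $M_n(T_{\infty})$ with the cone \eqref{projectiveOScone}, and the identification $M_n(S_{\infty}')^+ \cong CP(S_{\infty}, M_n)$ from \eqref{DualMatrixOrder}. The only difference is packaging: you invoke Propositions \ref{AOU:duality} and \ref{AOUdualcones} verbatim for the amplified sequences after checking that the pairing and the limits commute with forming $n\times n$ matrices, whereas the paper re-runs the same estimate directly, applying $F_p^{(m)}$ to the cone description of Remark \ref{inductiveAOU} and using ($\dagger$) for the converse.
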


\begin{proof}
By Theorem \ref{AOUoi}, $\Gamma$ is an order isomorphism. At the matrix level, we shall prove that $[ (f_k)^{ij} ] \in M_n(T_{\infty})^+$ if and only if $F \colon S_{\infty} \to M_n$ by $F( \phi_{k,\infty}(x_k) ) := [ \inner{\phi_{k,\infty}(x_k)} {(f_k)^{ij}} ]$ is completely positive.

Suppose $[(f_k)^{ij}] \in M_n(T_{\infty})^+$. Then by definition of matrix-ordered dual and \eqref{projectiveOScone}, 
for each $k \in \mathbb{N}$, the map $F_k \colon S_k \to M_n$ by $F_k(x_k) = [ f_k^{ij}(x_k) ]$ is completely positive. By regarding $M_m(S_{\infty})$ as the inductive limit of $( M_m(S_k), \phi_k^{(m)} )$ in \textbf{AOU} and applying Remark \ref{inductiveAOU}, a matrix $[ \phi_{k,\infty}(x_k^{st}) ] \in M_m( S_{\infty} )^+$ if and only if for each $r > 0$, there exist $p \geq l > k$ with $[y_l^{st}] \in M_n( S_l)$ with $|| [ \phi_{l, p} ( y_l^{st} )] ||^p \to 0$, such that $r I_m \otimes e_p + [ \phi_{k,p} (x_k^{st}) ] + [\phi_{l,p} (y_l^{st}) ]  \in M_m(S_p)^+$. 
By a similar argument in the proof of Proposition \ref{AOUdualcones}, applying $F_p^{(m)}$ to this matrix yields that
	\begin{equation*}
		r I_m \otimes F_p(e_p) + [ f_p^{ij}( \phi_{k,p} (x_k^{st} ) )  ]  +  [ f_p^{ij}( \phi_{l,p} (y_l^{st} ) )  ] 		\in (M_m \otimes M_n) ^+.
	\end{equation*}
The third term vanishes as $p \to \infty$, so we have 
	\begin{equation*} 
		r \left| \left| \left[ (f_k)^{ij} \right] \right| \right|_{\max} I_m \otimes I_n + \left[\inner{\phi_{k,\infty}(x_k^{st}) }{(f_k)^{ij} } \right] \in (M_m \otimes M_n)^+, 
	\end{equation*}
for every $r > 0$. Therefore, $F^{(m)} ([ \phi_{k,\infty} (x_k^{st}) ] ) = [\inner{\phi_{k,\infty}(x_k^{st}) }{(f_k)^{ij} } ] \geq 0$, and $F$ is completely positive. 

Conversely, suppose $F$ is completely positive and $(X_k) = [ (x_k)^{st} ] \in M_m(S_k)^+$. By similar argument in $(\dagger)$, for large enough $p \geq k$, 
	\begin{align*}
			F_k^{(m)} (X_k) 	&=	[f_k^{ij} (x_k^{st} ) ]  = [f_p^{ij} ( \phi_{k,p} (x_k^{st}) ) ] \\	
													&=  \left[ \inner{ \phi_{k,\infty}^{(p)} (X_k) }{ (f_k)^{ij} } \right] = F^{(m)}( \phi_{k,\infty}^{(m)} (X_k) ),
	\end{align*}
where the last quantity is positive by hypothesis. Let $X_k$ vary over $M_n(S_k)^+$, then it follows that $[f_k^{ij}] \in M_n(S_k')^+$ for every $k \in \mathbb{N}$. Consequently, $[(f_k)^{ij}] \in M_n(T_{\infty})^+$; and $\Gamma$ is a complete order isomorphism.
\end{proof}

\section{Matrix-ordered duals of separable operator systems}\label{Section:dualityMain}
Our goal is to generalize Theorem \ref{DualFiniteDimension} to separable operator systems by duality between injective and projective limits of finite-dimensional operator systems. Henceforth, let $S$ be an operator system. We start by noting that the Archimedean property for order units and matrix order units in $S'$ is automatically inherited, due to the nature of weak*-topology  and the canonical identification $M_n(S') \cong M_n(S)'$. 

\begin{thm}\label{DualOrderUnitArch}
Let  $\delta \in S'$. The following are equivalent:
	\begin{enumerate}[label={\upshape(\roman*)}, align=left, widest=iii, leftmargin=*]
		\item $\delta$ is an order unit for $S'$.
		\item $\delta$ is a matrix order unit for $S'$.
		\item $\delta$ is an Archimedean order unit for $S'$.
		\item $\delta$ is an Archimedean matrix order unit for $S'$.
	\end{enumerate} 
\end{thm}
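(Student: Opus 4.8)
The plan is to prove the cycle of implications (iv) $\Rightarrow$ (iii) $\Rightarrow$ (i) $\Rightarrow$ (ii) and (ii) $\Rightarrow$ (iv), the first two being trivial and the last two being the substantive content. The implication (iii) $\Rightarrow$ (i) is immediate from the definition of Archimedean order unit, and (iv) $\Rightarrow$ (iii) is the restriction to the ground level $n=1$. For (i) $\Rightarrow$ (ii), I would invoke Proposition \ref{MOUunit}: since $S'$ is a matrix-ordered $\ast$-vector space (it carries the canonical matrix ordering $M_n(S')^+ \cong CP(S,M_n)$, which is a compatible family), an element $\delta \in S'^+$ that is an order unit is automatically a matrix order unit. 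Thus the whole theorem reduces to showing (ii) $\Rightarrow$ (iv), i.e.\ that a matrix order unit $\delta$ for $S'$ is automatically Archimedean at every matrix level.

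The key point for (ii) $\Rightarrow$ (iv) is that the cone $M_n(S')^+$ is weak*-closed in $M_n(S')$. First I would fix $n$ and recall, as emphasized in Definition \ref{OSdual} and Remark \ref{Remark:ordered_dual}, that the weak*-topology on $M_n(S')$ agrees with the weak*-topology on $M_n(S)'$ under the identification $f \leftrightarrow [f_{ij}]$. Under this identification $M_n(S')^+$ corresponds to the cone of positive functionals on $M_n(S)$ (equivalently, via $f \leftrightarrow F$, to $CP(S,M_n)$), and the set of positive functionals on any ordered $\ast$-vector space is weak*-closed, being an intersection of closed half-spaces $\{f : f([v_{ij}]) \geq 0\}$ over $[v_{ij}] \in M_n(S)^+$. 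Hence $M_n(S')^+$ is closed in the order topology on $M_n(S')$ determined by (the order seminorm of) $I_n \otimes \delta$, because that order topology is coarser than — in fact here I only need: contained in — the weak*-topology; more carefully, since $\delta$ is a matrix order unit, $I_n \otimes \delta$ is an order unit for $M_n(S')$, the order seminorm it determines generates the order topology, and one checks this topology is weaker than the weak*-topology so that weak*-closedness gives order-closedness. Then by \cite[Theorem 2.30]{PT}, closedness of the cone in the order topology is equivalent to $I_n \otimes \delta$ being an Archimedean order unit for $M_n(S')$. Running this over all $n$ gives that $\delta$ is an Archimedean matrix order unit, which is (iv).

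I expect the main obstacle to be the topological bookkeeping in the previous paragraph: verifying cleanly that the order topology on $M_n(S')$ induced by $I_n \otimes \delta$ is comparable to (contained in) the weak*-topology, so that weak*-closedness of the positive cone transfers to order-closedness, and doing so uniformly in $n$. One convenient route is to argue directly rather than via topologies: given Hermitian $[f^k_{ij}] \in M_n(S')_h$ with $r(I_n \otimes \delta) + [f^k_{ij}] \in M_n(S')^+$ for all $r > 0$, evaluate on a fixed $[v_{ij}] \in M_n(S)^+$ to get $r\,\bigl(\sum_{ij}\delta(v_{ii})\bigr) + \sum_{ij} f_{ij}(v_{ij}) \ge 0$ for all $r>0$, hence $\sum_{ij} f_{ij}(v_{ij}) \ge 0$; since $[v_{ij}] \in M_n(S)^+$ was arbitrary this says exactly $[f_{ij}] \in M_n(S')^+$, giving the Archimedean property directly and sidestepping the need to identify the order topology at all. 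I would present this direct evaluation argument as the core of (ii) $\Rightarrow$ (iv), and note in a remark that it is the incarnation, in the dual, of the weak*-closedness of the positive cone.
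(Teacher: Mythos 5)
Your proposal is correct and follows essentially the same route as the paper: the equivalence of order unit and matrix order unit is obtained from Proposition \ref{MOUunit}, and the Archimedean property is obtained by evaluating $r(I_n\otimes\delta)+[f_{ij}]$ against positive elements of $M_n(S)$ and letting $r\searrow 0$, which is exactly the paper's (i)$\iff$(iii) argument transferred to each matrix level via $M_n(S')\cong M_n(S)'$. The only cosmetic differences are that you organize the proof as a cycle of implications and run the evaluation argument directly at level $n$ (with a harmless index slip, $\sum_{i}\delta(v_{ii})$ rather than $\sum_{ij}$), whereas the paper does it at the ground level and then invokes the identification.
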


\begin{proof}
For (i) $\iff$ (iii), one direction is trivial. Assume (i) and suppose $f \in S'_h$ such that $r \delta + f \in (S')^+$ for each $r > 0$. Then for each $x \in S^+ \setminus \{0\}$, $\inner{r \delta + f }{x} \geq 0$. Letting $r \searrow 0$ implies that $f(x) \geq 0$, and $f \in (S')^+$; so $\delta$ is an Archimedean order unit.  
Now (i) $\iff$ (ii) follows from Proposition \ref{MOUunit}; and (ii) $\iff$ (iv) follows from (i) $\iff$ (iii) applied to $M_n(S') \cong M_n(S)'$. 
\end{proof}
	
\begin{remark}
We remark that a necessary condition of being an order unit for $S'$ is faithfulness. Indeed, suppose $\delta$ is an order unit for $S'$ and $x \in S^+$ such that $\delta(x) = 0$. Then for each $f \in (S')^+$, there exists $r > 0$ so that $r \delta - f \in (S')^+$. Thus, $(r \delta - f)(x) = - f(x) \geq 0$ implies that $f(x) = 0$ for all $f \in S'$. By duality, $x = 0$ and $\delta$ is faithful. The converse holds if $S$ in addition is reflexive as a normed space. 
\end{remark}

\begin{prop}\label{DualReflexive}
If $S$ is reflexive as a normed space and $\delta \in (S')^+$ is faithful over $S$, then $\delta$ is an order unit for $S'$. Moreover, in this case, $S'$ is an operator system. 
\end{prop}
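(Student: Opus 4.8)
The ``moreover'' assertion is immediate once the first assertion is known: if $\delta$ is an order unit for $S'$, then Theorem~\ref{DualOrderUnitArch} upgrades it to an Archimedean matrix order unit, so $(S',\{M_n(S')^+\}_{n=1}^{\infty},\delta)$ is an operator system. So the plan is to prove that a faithful $\delta\in(S')^+$ is an order unit for $(S',(S')^+)$. Unwinding definitions, this means: for every $f\in(S')_h$ there is $r>0$ with $f(x)\le r\,\delta(x)$ for all $x\in S^+$. Since $\delta$ is faithful, $x\mapsto x/\delta(x)$ sends $S^+\setminus\{0\}$ onto the ``base'' $B_\delta:=\{x\in S^+:\delta(x)=1\}$, and for $x\in S^+\setminus\{0\}$ one has $f(x)=\delta(x)\,f(x/\delta(x))\le\delta(x)\,\|f\|\,\|x/\delta(x)\|_h$. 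Hence it suffices to show $B_\delta$ is \emph{norm-bounded} in $S$; then $r:=\|f\|\sup_{B_\delta}\|\cdot\|_h$ works. So the whole proposition reduces to the boundedness of $B_\delta$.

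The first, and main, step is to produce \emph{one} ``uniformly faithful'' positive functional, i.e.\ $g\in(S')^+$ with $g(x)\ge\|x\|_h$ for all $x\in S^+$ --- equivalently, $S^+$ is a well-based cone. Set $\Sigma:=\{x\in S^+:\|x\|_h=1\}$. For $x\in S^+$ one has $\|x\|_h\le1$ iff $x\le e$, so $\Sigma$ is contained in the order interval $[0,e]=\{x:0\le x\le e\}$, which is norm-closed, convex and bounded, hence \emph{weakly compact} because $S$ is reflexive. Consequently $K:=\overline{\mathrm{conv}}^{\,w}\Sigma\subseteq[0,e]$ is weakly compact and convex. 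The crux is that $0\notin K$; granting this, Hahn--Banach separation in the locally convex space $(S_h,\mathrm{weak})$ --- whose dual is $(S')_h$ --- yields $g\in(S')_h$ and $\alpha>0$ with $g\ge\alpha$ on $\Sigma$, and after rescaling $g\ge1$ on $\Sigma$; positive homogeneity then gives $g(x)\ge\|x\|_h\ (\ge0)$ on all of $S^+$, so $g\in(S')^+$, as wanted.

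Granted such a $g$, boundedness of $B_\delta$ follows by a short compactness argument. Let $K_0:=\{x\in S^+:g(x)=1\}$. Since $g(x)\ge\|x\|_h$, $K_0\subseteq[0,e]$, so $K_0$ is weakly compact, and $0\notin K_0$. As $\delta$ is weakly continuous and faithful, it is strictly positive on the weakly compact $K_0$, so $m:=\min_{K_0}\delta>0$. For $x\in B_\delta$ we have $x\ne0$ and $x/g(x)\in K_0$, hence $1=\delta(x)=g(x)\,\delta\!\big(x/g(x)\big)\ge m\,g(x)$, giving $g(x)\le1/m$ and therefore $\|x\|_h=g(x)\,\|x/g(x)\|_h\le1/m$. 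Thus $B_\delta\subseteq[0,\tfrac1m e]$ is bounded, which (by the first paragraph) makes $\delta$ an order unit for $S'$, and Theorem~\ref{DualOrderUnitArch} then finishes the proof.

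The step I expect to be the real obstacle is the claim $0\notin\overline{\mathrm{conv}}^{\,w}\Sigma$. Since $S$ is reflexive, by Mazur's theorem this weak closure coincides with the norm closure of $\mathrm{conv}\,\Sigma$, so the claim says: some fixed positive constant bounds below the order norm of every convex combination of positive elements of order norm $1$; equivalently, there is no sequence $y_n\in S^+$ with $\|y_n\|_h=1$ and $y_n\rightharpoonup0$. This is precisely where reflexivity must be used in an essential way --- it is false for non-reflexive operator systems such as the unitisation of $c_0$. The expected route is to argue that such a weakly null normalised sequence of positive elements, after passing to a basic subsequence, would, using $0\le y_n\le e$ to control the order norms of the partial sums $\sum y_{n_j}$, produce a subspace of $S$ isomorphic to $c_0$, contradicting reflexivity; carrying this out cleanly is the delicate point.
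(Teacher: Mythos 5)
Your reduction is fine as far as it goes: the ``moreover'' part via Theorem~\ref{DualOrderUnitArch} is exactly what the paper does, and your argument that boundedness of the base $B_\delta=\{x\in S^+:\delta(x)=1\}$, or the existence of a uniformly faithful $g\in(S')^+$ with $g\ge\|\cdot\|_h$ on $S^+$, would yield the order-unit property is correct. But the proof stops precisely at the step you flag as the crux, namely $0\notin\overline{\mathrm{conv}}^{\,w}\Sigma$ (equivalently: no sequence $y_n\in S^+$ with $\|y_n\|_h=1$ and $y_n\rightharpoonup 0$), and that claim is not merely delicate --- it is false for general reflexive AOU/operator systems, so the $c_0$-extraction you sketch cannot succeed. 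Concretely, in $\ell^2_{\mathbb R}\oplus\mathbb R$ let $C$ be the norm-closed cone generated by the Lorentz cone $\{(x,t):t\ge\|x\|_2\}$ together with the elements $g_n=(e_n,1/n)$. One checks that $C$ is proper, $(0,1)$ is an Archimedean order unit, and the order norm is equivalent to $\|x\|_2+|t|$ (states of the form $(y,1)$ with $y\ge 0$, $\|y\|\le 1$ suffice for the lower bound), so the complexification, with any operator system structure over this AOU space, is reflexive and separable. Yet $g_n$ is positive, $\|g_n\|_h=1+1/n$, and $g_n\rightharpoonup 0$: the partial sums $\sum_{n\le N}g_n$ have order norm of order $\sqrt N$, so no copy of $c_0$ arises and reflexivity is not contradicted. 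Moreover $\delta_0(x,t)=t$ is faithful on this system ($t=0$ forces the element to be $0$), while for $f(x,t)=\sum_n x_n/(\sqrt n\,\log(n+1))$ one has $(r\delta_0-f)(g_n)<0$ for large $n$ whatever $r>0$ is, so $\delta_0$ is not an order unit of the dual. Thus the missing step cannot be filled by any argument, and your proposal does not establish the proposition.

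For comparison, the paper's own proof is much shorter: it takes $K=B_1(S)\cap S^+$, which is weakly compact by reflexivity, lets $f$ attain its maximum there, and asserts that faithfulness makes the minimum of $\delta$ on $K$ strictly positive. Note that this assertion is exactly the uniform strict positivity you isolated: $0\in K$, so the stated minimum is $0$, and restricting to the normalized positive elements destroys weak compactness. So your analysis correctly locates the genuine difficulty rather than overlooking it, but as it stands neither your route (because the key claim fails, as the example above shows) nor the compactness shortcut closes this point; if you want a positive result along these lines you will need an additional hypothesis that rules out normalized weakly null positive sequences (for instance, that $\delta$ itself is bounded below on the positive part of the unit sphere).
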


\begin{proof}
Recall that  $S$ is reflexive if and only if the closed unit ball $B_1(S)$ of $S$ is weakly-compact. Let $f \in S'_h$ and consider the weakly-compact subset $K = B_1(S) \cap S^+$. Then $f$ attains its maximum on $K$. Also, $\delta$ being faithful asserts that the minimum of $\delta$ on $K$ is strictly positive. Hence, there exists $r > 0$ such that $r \delta - f  \geq 0$ on $S^+$; and $\delta$ is an order unit. The second statement now follows from Theorem \ref{DualOrderUnitArch}. 
\end{proof}

\begin{prop}\label{SeparableFaithful}
If $S$ is a separable operator system, then there exists a faithful linear functional $\delta$ over $S$. 
\end{prop}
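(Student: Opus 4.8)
## Proof proposal

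The plan is to produce a faithful positive functional on $S$ as a convergent sum $\delta = \sum_n 2^{-n} f_n$, where $\{f_n\}$ is a countable family of states that ``see'' enough of the positive cone $S^+$. The separability of $S$ will be used precisely to make such a countable family sufficient. Recall that $\delta$ faithful means: if $x \in S^+$ and $\delta(x) = 0$, then $x = 0$; since each $f_n$ is positive, $\delta(x) = 0$ forces $f_n(x) = 0$ for every $n$, so it is enough to arrange that the only element of $S^+$ annihilated by every $f_n$ is $0$.

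First I would fix a countable dense subset $\{x_j\}_{j \in \mathbb{N}}$ of $S$ (in the order norm topology), and from it extract a countable dense subset $D$ of the cone $S^+$ — for instance by taking, for each $j$, a closest point of $S^+$ to $x_j$, or more simply by noting $S^+$ is itself a separable metric space in the subspace topology. Then for each nonzero $d \in D$ I would invoke a Hahn--Banach separation / order-unit argument to produce a state $f_d \in \mathfrak{S}(S)$ with $f_d(d) > 0$: since $d \notin -S^+$ (the cone is proper and $d \neq 0$) and $S^+$ is closed with Archimedean order unit $e$, one gets a positive functional, normalized to a state, not vanishing at $d$. Enumerate these as $\{f_n\}_{n \in \mathbb{N}}$ and set $\delta = \sum_{n=1}^{\infty} 2^{-n} f_n$. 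Because each $f_n$ has order norm $1$ (Proposition \ref{upContractive}), the series converges in the dual Banach space $S'$, and $\delta$ is a positive functional, i.e. $\delta \in (S')^+$.

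It remains to check that $\delta$ is faithful. Suppose $x \in S^+$ with $\delta(x) = 0$; then $f_n(x) = 0$ for all $n$. Pick a sequence $d_k \in D$ with $d_k \to x$ in order norm. If $x \neq 0$, then for large $k$ the $d_k$ are nonzero, and $f_{d_k}(d_k) > 0$; but $f_{d_k}(x) = 0$, so $|f_{d_k}(d_k)| = |f_{d_k}(d_k - x)| \le \|d_k - x\|_m \to 0$ — this does not immediately give a contradiction because the lower bound $f_{d_k}(d_k) > 0$ is not quantified uniformly. The honest fix is to choose, at the stage of building $D$, for each $j$ a state $f_j$ with $f_j(x_j) \geq \tfrac12 \|x_j\|_m$ (possible since $\|x_j\|_m = \sup\{|f(x_j)| : f \in \mathfrak S(S)\}$ and one may replace $f$ by $f^*$-symmetrizing or by rotating); then if $x \in S^+$ is annihilated by all $f_n$, density gives $x_{j} \to x$ with $\|x_j - x\|_m \to 0$, hence $\|x_j\|_m \to \|x\|_m$ and $\tfrac12\|x_j\|_m \le f_j(x_j) = f_j(x_j - x) \le \|x_j - x\|_m \to 0$, forcing $\|x\|_m = 0$, i.e. $x = 0$.

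The main obstacle, and the step requiring care, is this last quantitative point: a naive ``one state per point of a dense set'' construction only separates points of $D$, not their limits, so the family of functionals must be chosen to control the \emph{norm} (equivalently, to stay uniformly bounded below in a scale-invariant way) rather than merely to be nonzero at each $d$. Using the minimal order norm formula $\|v\|_m = \sup_{f \in \mathfrak S(V)}|f(v)|$ to pick near-norming states handles this cleanly. Everything else — convergence of the series in $S'$, positivity of $\delta$, and the reduction from $\delta(x)=0$ to $f_n(x)=0$ — is routine given the preliminaries.
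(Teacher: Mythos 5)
Your argument is correct, but it follows a different route from the paper. The paper exploits the \emph{weak*} structure: since $S$ is norm-separable, bounded subsets of $S'$ are weak*-metrizable, so the weak*-compact state space $\mathfrak{S}(S)$ is weak*-separable; taking a weak*-dense sequence of states $\{\delta_n\}$ and setting $\delta=\sum_n 2^{-n}\delta_n$, the paper concludes from $\delta(x)=0$, $x\geq 0$, that every $\delta_n$ kills $x$, hence by weak*-density every state kills $x$, hence every $f\in S'$ does (states span $S'$), so $x=0$. You instead work entirely on the primal side with the norm: you take a norm-dense sequence $\{x_j\}$ in $S$, choose near-norming states $f_j$ with $|f_j(x_j)|\geq \tfrac12\|x_j\|_m$ via the formula $\|v\|_m=\sup_{f\in\mathfrak{S}(S)}|f(v)|$, and run the quantitative estimate $\tfrac12\|x_j\|_m\leq |f_j(x_j-x)|\leq\|x_j-x\|_m\to 0$ to force $\|x\|_m=0$. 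This buys you a more elementary proof: no weak*-metrizability of dual balls, no appeal to compactness or separability of $\mathfrak{S}(S)$, only the norming property of states and the fact that $\|\cdot\|_m$ is a genuine norm on an AOU space. The paper's proof is shorter given its preliminaries (which already record that $\mathfrak{S}(S)$ is weak*-compact and spans $S'$), and its density is topological rather than quantitative. One cosmetic remark on your write-up: you cannot literally arrange $f_j(x_j)\geq\tfrac12\|x_j\|_m$ as a real inequality by ``rotating'' (the state is fixed and positive; only $|f_j(x_j)|$ is controlled by the sup formula), but your final estimate only uses $|f_j(x_j)|\geq\tfrac12\|x_j\|_m$, so nothing is lost; also, you correctly diagnosed and repaired the failure of the naive ``one separating state per point of a dense subset of $S^+$'' idea, which is exactly the step where care is needed in this approach.
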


\begin{proof}
Since $S$ is separable, $S'$ is weak*-metrizable. The state space $\mathfrak{S}(S)$ is weak*-compact in $S'$,  so it is weak*-separable. Let $\{ \delta_n \}$ be a weak*-dense sequence in $\mathfrak{S}(S)$ and define the functional $\delta \colon S \to [0,\infty)$ by $\delta(x) := \sum_n \frac{1}{2^n} \delta_n(x)$. Note that since $S'$ is a Banach space and $|| \delta_n || \leq 1$, $\delta$ is well-defined and $\delta \in \mathfrak{S}(S)$. 
To this end, suppose by contrary $\delta(x) = 0$ for some $x > 0$. Then $\delta_n(x) = 0$ for all $n \in \bb{N}$. By density, it implies that $f(x) = 0$ for all $f \in \mathfrak{S}(S)$. But $S'$ is the span of  $\mathfrak{S}(S)$, it follows that $f(x) = 0$ for all $f \in S'$, and $x = 0$. Therefore, $\delta$ is faithful over $S$. 
\end{proof}

Since finite-dimensional $S$ is both reflexive and separable, Theorem \ref{DualFiniteDimension} is a corollary of the above propositions. 
For infinite-dimensional $S$, to this end it remains to prove the existence of order unit for $S'$. 

\begin{prop}\label{DualCountableDim}
If $\dim(S)$ is countably infinite and $\delta$ is faithful over $S$, then $\delta$ is an Archimedean order unit for $S'$.
\end{prop}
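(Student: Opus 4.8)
The plan is to realize the countably-infinite-dimensional $S$ as an inductive limit of finite-dimensional operator systems via a complete order embedding, so that $S'$ becomes (via the duality of Section \ref{Section:Duality injective}) the projective limit of the matrix-ordered duals $S_k'$, each of which is a finite-dimensional operator system by Theorem \ref{DualFiniteDimension}. Concretely, I would pick a linearly independent spanning sequence $\{x_j\}_{j\in\mathbb{N}}$ of $S_h$ and let $S_k$ be the span of $e, x_1^{\,}, \dots$ arranged so that each $S_k$ is a finite-dimensional operator subsystem of $S$ (one must take care to close up under the involution and to include the order unit $e$); the inclusions $\phi_k \colon S_k \hookrightarrow S_{k+1}$ are then unital complete order embeddings with $S_\infty = \varinjlim_{\textbf{OS}} S_k = S$ (here I invoke that $S = \cup_k S_k$ with the induced matrix order and that the inductive limit in \textbf{OS} from Section \ref{inductlim} identifies with this union). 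Since each $\phi_k$ is a unital complete order embedding, the dual sequence $(S_k', \phi_k')$ is a projective sequence in \textbf{OS} with surjective connecting maps, and Theorem \ref{OScoi} gives a complete order isomorphism $\Gamma \colon T_\infty \to S_\infty' = S'$, where $T_\infty = \varprojlim_{\textbf{OS}} S_k'$.

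Next I would produce the candidate order unit for $S'$. Theorem \ref{DualFiniteDimension} tells us each $S_k'$ carries an Archimedean matrix order unit, and I would want these to be compatible under the maps $\phi_k'$ so that they assemble into an element $(\delta_k) \in T_\infty$. The natural choice: given the faithful state $\delta$ on $S$ (whose existence over countably-dimensional separable $S$ is \emph{not} needed from Proposition \ref{SeparableFaithful} since here $\delta$ is handed to us as a hypothesis), set $\delta_k := \phi_{k,\infty}'(\delta) = \delta|_{S_k} \in S_k'$, i.e. the restriction of $\delta$ to each $S_k$. These restrictions are automatically compatible, $\phi_k'(\delta_{k+1}) = \delta_k$, so $(\delta_k) \in T_\infty$. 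Moreover $\delta$ faithful on $S$ forces $\delta_k = \delta|_{S_k}$ to be faithful on $S_k$ (if $x \in S_k^+$ with $\delta_k(x) = 0$ then $x \in S^+$ and $\delta(x) = 0$, so $x = 0$), hence by Theorem \ref{DualFiniteDimension}(ii) each $\delta_k$ is an Archimedean matrix order unit for $S_k'$. Then I need to check that $(\delta_k)$ is an order unit for $T_\infty$: given $f = (f_k) \in (T_\infty)_h$, since $(T_\infty)_h = V_\infty \cap V_h$ as in Section \ref{projectlimAOU}, $\|f\|_h = \sup_k \|f_k\|^k_h < \infty$; for each $k$ there is $r_k \geq 0$ with $r_k \delta_k \pm f_k \geq 0$, and the content of the claim is that one can take a single $r$ working for all $k$ simultaneously. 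This is exactly where the uniformity must come from, and it is not free — it is the reason the hypothesis "$\delta$ faithful" is doing real work.

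\textbf{The main obstacle} is precisely this uniform boundedness step: showing $\sup_k \inf\{ r \geq 0 : r\delta_k \pm f_k \in (S_k')^+ \} < \infty$ for every $f \in (T_\infty)_h$. I would attack it by relating the "$\delta_k$-order seminorm" on $(S_k')_h$ to the order norm $\|\cdot\|^k_h$ coming from the AOU structure on $S_k'$ (which, by the projective-limit construction in Section \ref{projectlimAOU}, is what controls membership in $T_\infty$). On the finite-dimensional operator system $S_k$, faithfulness of $\delta_k$ gives a constant $c_k > 0$ with $\delta_k(x) \geq c_k \|x\|$ for $x \in S_k^+$ with $\|x\| = 1$ (compactness of $\{x \in S_k^+ : \|x\|=1\}$), so that $\|f_k\|_{\delta_k} \leq c_k^{-1}\|f_k\|^k_h$; the difficulty is that a priori $c_k \to 0$. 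The key realization is that for the order unit it suffices to bound things using the \emph{ambient} faithful $\delta$ on all of $S$: for $f = (f_k) \in T_\infty$, each $f_k = \Gamma^{-1}$-image restricted, and one can instead test positivity of $r\delta \pm \Gamma(f)$ on $S^+$ directly once one has $\Gamma(f) \in S'$ and notes $\|\Gamma(f)\|$-boundedness from $f \in T_\infty$ giving $\sup_k\|f_k\| < \infty$, hence $\Gamma(f)$ bounded on the $\|\cdot\|_M$-unit ball of $S$; then faithfulness of $\delta$ on $S$ (minimum of $\delta$ bounded below on each $\|\cdot\|_M$-bounded slice of $S^+$ — this uses that $\delta$ is a single fixed faithful functional) yields a uniform $r$. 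So the cleanest route is: transport the problem through $\Gamma$ to $S'$, establish that $\Gamma(f)$ is a bounded functional on $S$, and then run the argument of Proposition \ref{DualReflexive} with weak-compactness replaced by the boundedness already in hand, concluding via Theorem \ref{DualOrderUnitArch} that $\delta$ (identified with $\Gamma((\delta_k))$) is an Archimedean order unit for $S'$. I expect the bookkeeping identifying $\Gamma((\delta_k))$ with the given $\delta$ under the duality pairing \eqref{duality}, and verifying the boundedness of $\Gamma(f)$ from $f \in T_\infty$, to be the only genuinely delicate points.
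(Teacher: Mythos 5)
Your skeleton is the paper's proof: choose a Hermitian Hamel basis containing $e$, let $S_k$ be the resulting finite-dimensional operator subsystems, observe that the inclusions are unital complete order embeddings whose inductive limit in \textbf{OS} is $S$, restrict $\delta$ to get faithful $\delta_k$ so that each $(S_k',\delta_k)$ is an operator system (the paper quotes Proposition \ref{DualReflexive}, you quote Theorem \ref{DualFiniteDimension}(ii) --- the same fact), dualize to the projective sequence of restriction maps, and invoke Theorem \ref{OScoi} to identify $S'$ with the projective limit $T_\infty$ of $(S_k',q_k)$, under which $(\delta_k)$ corresponds to $\delta$. Up to that point you and the paper agree completely.

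Where you diverge is the ``main obstacle,'' and there your proposal has a genuine error. First, within the framework you are already using, no separate uniform-$r$ argument is needed for elements of $T_\infty$: by the definition \eqref{Vinfty} of the projective limit, $(f_k)\in T_\infty$ \emph{means} $\sup_k\|f_k\|_h^k<\infty$, where $\|f_k\|_h^k=\inf\{r>0\colon r\delta_k\pm f_k\in (S_k')^+\}$ is the order norm determined by $\delta_k$; so $r=\sup_k\|f_k\|_h^k$ works for all $k$ at once, which is exactly Lemma \ref{orderunit}, and Theorem \ref{OS_projective_limit} already records that $(T_\infty,(\delta_k))$ is an operator system. The only place a uniformity question genuinely lives is in the surjectivity of $\Gamma$ onto $S'$ (every $f\in S'$ must restrict to a family with uniformly bounded $\delta_k$-order norms), but that is part of the statement of Theorem \ref{OScoi}, which both you and the paper cite rather than reprove. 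Second, the argument you propose to supply the uniformity is false: faithfulness of a single functional $\delta$ on an infinite-dimensional $S$ does \emph{not} make $\delta$ bounded below on norm-bounded slices of $S^+$, so one cannot ``run Proposition \ref{DualReflexive} with weak compactness replaced by boundedness.'' For instance, on the span of the polynomials in $C([0,1])$ with $\delta(f)=\int_0^1 f\,dt$, the functions $(1-(t-\tfrac12)^2)^n$ are positive of sup-norm one with $\delta$-value tending to $0$; weak compactness of $B_1(S)\cap S^+$ is precisely what Proposition \ref{DualReflexive} uses to obtain that strictly positive minimum, and it is unavailable here. So the final leg of your proposal would not survive as written; the proof closes instead by quoting Theorem \ref{OS_projective_limit} together with Theorem \ref{OScoi}, as the paper does.
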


\begin{proof}
Let $\{x_i=x_i^* \}_{i=1}^{\infty}$ with $x_1 = e$ be a Hamel basis for $S$ and let $S_k$ be the span of $x_i$, $i = 1, \dots, k$. Denote $\iota_k \colon S_k \to S_{k+1}$ the inclusion map and let $\delta_k$ be the restriction of $\delta$ to $S_k$. Note that $\iota_k'$ is the canonical complete order quotient map $q_k \colon f \mapsto f|_{S_k}$. 
It is clear that $S$ is the inductive limit of $(S_k, \iota_k)$ in \textbf{OS}. Since $\delta_k$ is faithful, by Proposition \ref{DualReflexive}, $(S_k', q_k)$ with $\delta_k$ is a projective sequence in \textbf{OS}. 
By Theorem \ref{OScoi}, $S'$ is completely order isomorphic to $\projectlim{OS} (S_k', q_k)$, whose Archimedean matrix order unit $( \delta_k )_{k\in\bb{N}}$ corresponds to $\delta \in S'$.
\end{proof}

For the separable case, we first consider dense operator subsystem $T$ of $S$. The following lemma must be well-known, but we could not find a precise reference.  

\begin{lemma}\label{DensityMatrixLevel}
Let $T$ be an operator subsystem of $S$. Then $T$ is dense in $S$ in the order norm topology if and only if for every $n \in \bb{N}$, $M_n(T)$ is dense in $M_n(S)$ in the order norm topology. 
\end{lemma}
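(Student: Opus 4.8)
The statement is a two-way implication, and the forward direction is immediate: for each $n$, the amplification of the inclusion $T \hookrightarrow S$ is the inclusion $M_n(T) \hookrightarrow M_n(S)$, and if the latter image were not dense we could separate a point of $M_n(S)$ from $M_n(T)$; but this would contradict density of $T$ in $S$ once we know that the order norm on $M_n(S)$ is controlled entrywise. The real content is the converse — actually it is the forward direction that requires an estimate, since we must bound $\|[s_{ij}] - [t_{ij}]\|_{M_n(S)}$ in terms of the entrywise distances $\|s_{ij} - t_{ij}\|_S$. The converse (density at level $n$ for all $n$ implies density at level $1$) is trivial by restricting to the upper-left corner, i.e. compressing by $E_{11}$, which is a unital-completic... a completely positive, hence order-norm contractive, idempotent $M_n(S) \to S$.

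So the plan is: first dispose of the converse by the corner compression argument, noting that $s \mapsto E_{11}^* (s \otimes I_n) E_{11}$ — more precisely the map picking out the $(1,1)$-entry — is completely positive and unital on the relevant systems, hence order-norm contractive, and carries $M_n(T)$ onto $T$. For the forward direction, fix $n$ and $[s_{ij}] \in M_n(S)$ with $\varepsilon > 0$; by density of $T$ in $S$ choose $t_{ij} \in T$ with $\|s_{ij} - t_{ij}\|_M < \varepsilon$ for each of the $n^2$ entries. Then I would invoke Lemma \ref{op-norm_estimate} (or rather its underlying estimate, after representing $S \subset B(\mathcal H)$ faithfully as a concrete operator system via the Choi–Effros theorem, which preserves all the order norms since they agree with the operator norm): $\|[s_{ij} - t_{ij}]\|_{op} \le n \cdot \max_{ij}\|s_{ij} - t_{ij}\|_M < n\varepsilon$. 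Since the order norm on $M_n(S)$ coincides with the operator norm inherited from $B(\oplus^n \mathcal H)$, this gives $\|[s_{ij}] - [t_{ij}]\|_{M_n(S)} < n\varepsilon$, and $[t_{ij}] \in M_n(T)$; letting $\varepsilon \to 0$ finishes it.

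\textbf{Main obstacle.} The one delicate point is that $T$, being merely an operator subsystem of $S$, carries the order norm(s) it inherits as an operator system in its own right, and one must check these agree with the restriction of the order norm of $S$ — equivalently, that the inclusion $T \hookrightarrow S$ is a complete order embedding whose amplifications are isometric for the order norms. This holds because a unital complete order embedding between operator systems is automatically a complete isometry for the order norms (the order norm of an operator system equals the operator norm in any faithful representation, and a complete order embedding realizes $M_n(T)$ as a unital selfadjoint subspace of $M_n(S) \subset B(\oplus^n\mathcal H)$ with the restricted operator norm). Once this identification is in place, the entrywise estimate of Lemma \ref{op-norm_estimate} does all the work and no genuinely new argument is needed; the lemma is, as the authors say, folklore, and the proof is essentially just bookkeeping with these norm identifications.
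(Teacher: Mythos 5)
Your argument is correct, and it reaches the conclusion by a slightly different route than the paper. You approximate a matrix $[s_{ij}]\in M_n(S)$ entry by entry, choosing $t_{ij}\in T$ with $\|s_{ij}-t_{ij}\|_M<\varepsilon$, and then control the matrix norm by the entrywise estimate of Lemma \ref{op-norm_estimate}, $\|[s_{ij}-t_{ij}]\|_{op}\le n\max_{ij}\|s_{ij}-t_{ij}\|_M$, after fixing a concrete representation $S\subset B(\mathcal H)$. The paper instead first reduces to the Hermitian part, decomposes $x\in M_n(S)_h$ as a finite sum $\sum_i A_i\otimes x_i$ with $A_i$ Hermitian scalar matrices and $x_i\in S_h$ via \cite[Lemma 3.7]{PTT}, approximates each $x_i$ by elements of $T_h$, and bounds $\|\sum_i A_i\otimes(x_i-t_i^m)\|_{op}$ by $\sum_i\|A_i\|_{op}\,\|x_i-t_i^m\|_{op}$, using \cite[Corollary 5.6]{PT} to identify the Hermitian order norm with the operator norm. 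Your version is a bit more economical in that it reuses Lemma \ref{op-norm_estimate}, which is already in the paper, and needs no Hermitian reduction; the paper's version, by working only with Hermitian elements, sidesteps any discussion of which order norm one uses on non-Hermitian elements. On that point your phrasing is slightly imprecise: in a concrete representation the operator norm agrees with the order norm only on Hermitian elements, and is merely one order norm (equivalent, not equal, to the others) on the whole space; this does not affect your argument, since density in the order norm topology is insensitive to the choice among the mutually equivalent order norms. Your "main obstacle" about the intrinsic order norms of $T$ is also handled correctly (for an operator subsystem $T^+=T\cap S^+$, so the Hermitian order norms agree at every matrix level), though it is tangential, as the statement only involves the topology of $S$; and the easy direction is simply the case $n=1$, so the corner-compression argument, while valid, is not needed.
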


\begin{proof}
One direction is trivial. Suppose $S \subset B(\cl{H})$ is a concrete operator system and $T$ is dense in $S$ in the order norm topology.  It suffices to show that $M_n(T)_h$ is dense in $M_n(S)_h$. Given $x \in M_n(S)_h$, by \cite[Lemma 3.7]{PTT}, decompose $x = \sum_{i=1}^N A_i \otimes x_i$, where $A_i \in M_n(S)_h$ and $x_i \in S_h$. Since $T_h = T \cap S_h$ is dense in $S_h$ in the subspace topology, for  $1\leq i \leq N$, there exists sequence $t_i^m \in T_h$ such that $|| x_i - t_i^m ||_h \to 0$. By \cite[Corollary 5.6]{PT} the order norm $|| \cdot ||_h$ is the operator norm $|| \cdot ||_{op}$ inherited from $B(\cl{H})$. Let $t = \sum_{i=1}^N A_i \otimes t_i \in M_n(T)_h$. Then in $M_n(S) \subset M_n(B(\cl{H})) \cong B( \cl{H}^{(n)})$, 
	\begin{align*}	
	|| t - x ||_h &= \left\|  \sum_{i=1}^N A_i \otimes (x_i - t_i^m) \right\|_{op} \leq \sum_{i=1}^N \lambda_i || x_i - t_i^m ||_{op}, 
	\end{align*}
where $\lambda_i = ||A_i||_{op}$. The last quantity goes to $0$ as $m \to \infty$, so $M_n(T)$ is dense in $M_n(S)$ in the order norm topology. 
\end{proof}

Suppose $T$ is a dense operator subsystem of separable $S$.  By a standard density argument, every $f \in T'$ has a unique extension $\tilde{f} \in S'$ such that $\tilde{f}|_{T} = f$. In fact, the map $f \mapsto \tilde{f}$ is an isometric isomorphism. 

\begin{prop}\label{DualDensity}
Let $S$ and $T$ be given as above. If $T'$ is an operator system with Archimedean matrix order unit $\delta$, then $S'$ is an operator system with  Archimedean matrix order unit $\tilde{\delta}$.  Furthermore, the map $f \mapsto \tilde{f}$ defines a unital complete order isomorphism between $T'$ and $S'$.  
\end{prop}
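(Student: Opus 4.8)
The plan is to transfer the operator system structure from $T'$ to $S'$ along the isometric isomorphism $\Phi \colon T' \to S'$, $f \mapsto \tilde f$, and then identify $\tilde\delta$ as an Archimedean matrix order unit. First I would record the underlying analytic facts: since $T$ is dense in $S$ in the order norm topology, a standard density argument (Hahn--Banach together with uniform continuity of a functional on a dense subspace of a normed space) shows that every bounded $f \in T'$ extends uniquely to $\tilde f \in S'$ with $\|\tilde f\| = \|f\|$, and conversely restriction $g \mapsto g|_T$ is the inverse; so $\Phi$ is an isometric linear isomorphism onto $S'$. It is also weak*-continuous in both directions, because $\tilde f(x) = \lim_m f(t_m)$ for $t_m \to x$ and restriction is visibly weak*-continuous; this will matter for the Archimedean conclusion via Theorem \ref{DualOrderUnitArch}.

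Next I would check that $\Phi$ is a complete order isomorphism at the level of cones, i.e. $[f_{ij}] \in M_n(T')^+$ if and only if $[\tilde f_{ij}] \in M_n(S')^+$. Using the identification \eqref{DualMatrixOrder}, $[f_{ij}] \in M_n(T')^+$ iff the map $F \colon T \to M_n$, $F(t) = [f_{ij}(t)]$, is completely positive, and $[\tilde f_{ij}] \in M_n(S')^+$ iff $\tilde F \colon S \to M_n$, $\tilde F(x) = [\tilde f_{ij}(x)]$, is completely positive. Since $\tilde F$ is the (unique, continuous) extension of $F$ and, by Lemma \ref{DensityMatrixLevel}, $M_m(T)$ is dense in $M_m(S)$ in the order norm topology for every $m$, complete positivity of $F$ passes to $\tilde F$: for $X \in M_m(S)^+$ pick $X_k \in M_m(T)^+$ with $X_k \to X$ (intersecting the approximating net with the closed cone, or approximating $rI_m\otimes e_S + X$ and using that the order norm controls the cone), then $F^{(m)}(X_k) \in M_m(M_n)^+ = M_{mn}^+$ converges to $\tilde F^{(m)}(X)$, which is therefore positive. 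The reverse direction is immediate by restriction. Hence $\Phi^{(n)}$ and its inverse are positive for all $n$, so $\Phi$ is a unital complete order isomorphism once we know $\Phi(\delta) = \tilde\delta$ is the distinguished unit, which holds by construction.

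Finally, since $\delta$ is an Archimedean matrix order unit for the operator system $T'$, and $\Phi$ is a bijective complete order isomorphism carrying $\delta$ to $\tilde\delta$, the image $\tilde\delta$ is automatically an Archimedean matrix order unit for $S'$: complete order isomorphisms preserve matrix order units and the Archimedean property. Alternatively, and perhaps more cleanly, one invokes Theorem \ref{DualOrderUnitArch}: it suffices to show $\tilde\delta$ is an order unit for $S'$, and then the Archimedean matrix order unit property is automatic. That $\tilde\delta$ is an order unit for $S'$ follows from the cone identification of the previous paragraph: for $g \in (S')_h$ we have $g|_T \in (T')_h$, so there is $r>0$ with $r\delta \pm g|_T \in (T')^+$, whence $r\tilde\delta \pm g = \widetilde{r\delta \pm g|_T} \in (S')^+$. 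Thus $S'$ is an operator system with Archimedean matrix order unit $\tilde\delta$, and $\Phi$ is the asserted unital complete order isomorphism.

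The main obstacle is the middle step: propagating complete positivity of $F \colon T \to M_n$ to its extension $\tilde F \colon S \to M_n$. This needs Lemma \ref{DensityMatrixLevel} so that one can approximate positive matrices over $S$ by positive matrices over $T$ (not merely arbitrary matrices), which in turn relies on the order norm coinciding with the operator norm and on the closedness of the cones in the order topology; handling the approximation inside the cone, rather than just in norm, is the point that requires care.
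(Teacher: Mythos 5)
Your proposal is correct and follows essentially the same route as the paper: unique order-norm-continuous extension $f \mapsto \tilde f$, Lemma \ref{DensityMatrixLevel} together with the identification $M_n(S')^+ \cong CP(S,M_n)$ to transfer the matrix cones, and Theorem \ref{DualOrderUnitArch} to upgrade the order unit $\tilde\delta$ to an Archimedean matrix order unit. The one delicate point, approximating elements of $M_m(S)^+$ by elements of $M_m(T)^+$, is resolved by your shift-by-a-multiple-of-the-unit remark (not by ``intersecting the approximating net with the cone'', which by itself does not work), and the paper glosses over exactly the same point at the ground level when it asserts $t_m \in T^+$ with $t_m \to x$.
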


\begin{proof}
Given $\tilde{f} \in S'_h$, then its restriction to $T$ is $f \in T'_h$. By hypothesis, there exists $r > 0$ with $r \delta - f \in (T')^+$. We claim that $r \tilde{\delta} - \tilde{f} \in (S')^+$. If $x \in S^+$, then there exists sequence $t_m \in T^+ = T \cap S^+$ such that $t_m \to x$ under the order norm. Thus,  
	\begin{equation*}
		\inner{r\tilde{\delta} - \tilde{f}}{x} = \inner{r\tilde{\delta} - \tilde{f}}{ \lim_m t_m } = \lim_m \inner{r \delta - f }{t_m} \geq 0,
	\end{equation*}
where the second equality follows from the unique extension. Hence, $\tilde{\delta}$ is an order unit for $S'$. By Theorem \ref{DualOrderUnitArch}, $(S', \delta)$ is an operator system. 

Finally, by density and Lemma \ref{DensityMatrixLevel}, the map $f \mapsto \tilde{f}$ uniquely extends $CP(T, M_n)$ to $CP(S, M_n)$. Hence, it is unital completely positive with inverse $f \mapsto f|_{T}$. Therefore, $f \mapsto \tilde{f}$ is a unital complete order isomorphism.
\end{proof}

We conclude the main result of the paper. 

\begin{thm}\label{DualSeparable}
Let $S$ be a separable operator system. Then 
	\begin{enumerate}[label={\upshape(\roman*)}, align=left, widest=ii, leftmargin=*]
		\item there exists faithful $\delta \colon S \to \bb{C}$; and
		\item any faithful functional $\delta$ is an Archimedean matrix order unit for $S'$.
	\end{enumerate}
Consequently, $S'$ with any faithful state is an operator system.  
\end{thm}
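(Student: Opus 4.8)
The plan is to deduce Theorem \ref{DualSeparable} by assembling the propositions just proved. Part (i) is precisely Proposition \ref{SeparableFaithful}, so there is nothing left to do there. For part (ii), fix a faithful functional $\delta$ on $S$; after rescaling we may assume $\delta$ is a faithful state. The strategy is to produce a \emph{countable-dimensional} dense operator subsystem $T \subseteq S$ on which $\delta$ restricts faithfully, apply Proposition \ref{DualCountableDim} to conclude that $\delta|_T$ is an Archimedean matrix order unit for $T'$, and then transfer this back to $S'$ via Proposition \ref{DualDensity}.

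First I would construct $T$. Since $S$ is separable in the order norm topology, choose a countable set $\{y_i\}_{i\in\bb{N}}$ that is dense in $S$; passing to real and imaginary parts we may assume each $y_i$ is Hermitian, and we adjoin $y_0 = e$. Let $T$ be the linear span of $\{y_i\}_{i\geq 0}$. Then $T$ is a unital selfadjoint subspace of $S$, hence an operator subsystem with the inherited matrix ordering, it is dense in $S$ in the order norm, and $\dim(T)$ is at most countably infinite. (If $\dim(T)$ is finite then $S = T$ is finite-dimensional and the result is already Theorem \ref{DualFiniteDimension} via Propositions \ref{DualReflexive} and \ref{SeparableFaithful}; so assume $\dim(T)$ is countably infinite.) The restriction $\delta|_T$ is a state on $T$, and it is faithful over $T$: if $t \in T^+ = T \cap S^+$ satisfies $\delta(t) = 0$, then $t \in S^+$ and faithfulness of $\delta$ over $S$ forces $t = 0$.

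Next, apply Proposition \ref{DualCountableDim} to the countably-infinite-dimensional operator system $T$ with the faithful state $\delta|_T$: it yields that $\delta|_T$ is an Archimedean order unit for $T'$, hence by Theorem \ref{DualOrderUnitArch} an Archimedean matrix order unit, so $T'$ is an operator system. Now invoke Proposition \ref{DualDensity} with this dense subsystem $T$: since $T'$ is an operator system with Archimedean matrix order unit $\delta|_T$, the unique extension $\widetilde{\delta|_T}$ is an Archimedean matrix order unit for $S'$ and $S'$ is an operator system. It only remains to observe that $\widetilde{\delta|_T} = \delta$, which is immediate from uniqueness of the extension since $\delta$ is itself a continuous functional on $S$ restricting to $\delta|_T$. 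This establishes (ii), and the final ``consequently'' is then immediate: $S'$ equipped with any faithful state $\delta$ is a matrix-ordered $\ast$-vector space with an Archimedean matrix order unit, i.e.\ an operator system.

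The one point that requires a little care — the main (mild) obstacle — is making sure the subsystem $T$ simultaneously has all three properties: countable Hamel dimension, density in $S$, and faithfulness of $\delta|_T$. Density and faithfulness are straightforward from the construction above, and countable dimensionality is automatic because $T$ is the span of a countable set. One should also double-check that Proposition \ref{DualCountableDim} applies verbatim, i.e.\ that its hypothesis ``$\delta$ faithful over $S$'' is meant in the sense satisfied by $\delta|_T$ over $T$ — it is, since that proposition only uses faithfulness to invoke Proposition \ref{DualReflexive} on each finite-dimensional piece $S_k$. No further estimates are needed; the whole argument is a bookkeeping assembly of the preceding results, with Lemma \ref{DensityMatrixLevel} doing the real work inside Proposition \ref{DualDensity}.
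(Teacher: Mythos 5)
Your proposal is correct and follows essentially the same route as the paper: take the span of a countable dense set together with its adjoints and $e$ to get a dense, countable-dimensional operator subsystem $T$, apply Proposition \ref{DualCountableDim} to $(T',\delta|_T)$, and transfer to $S'$ via Proposition \ref{DualDensity}. The extra details you supply (faithfulness of $\delta|_T$, the finite-dimensional case, and $\widetilde{\delta|_T}=\delta$) are correct refinements of the paper's argument, not a different method.
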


\begin{proof}
The first statement is Proposition \ref{SeparableFaithful}. Suppose $\delta \in (S')^+$ is faithful and  $X$ is a countable dense subset of $S$. Define $T$ to be the span of $X$, $X^*$, and $e$ in $S$. Then $T$ is a dense operator subsystem of $S$ with countable dimension. By Proposition  \ref{DualCountableDim}, $(T', \delta|_T)$ is an operator system. The result now follows from Proposition \ref{DualDensity}. 
\end{proof}

\section*{Acknowledgement}
The post-doctoral fellowship of Ng at the Hong Kong Polytechnic University is supported by the PolyU central research grant G-YBKR, the HK RGC grant PolyU 502512, and the AMSS-PolyU Joint Research Institute.


\providecommand{\bysame}{\leavevmode\hbox to3em{\hrulefill}\thinspace}
\providecommand{\MR}{\relax\ifhmode\unskip\space\fi MR }
\providecommand{\MRhref}[2]{%
  \href{http://www.ams.org/mathscinet-getitem?mr=#1}{#2}
}
\providecommand{\href}[2]{#2}

\end{document}